\newcommand{\lOddB}{\big\llbracket} 
\newcommand{\rOddB}{\big\rrbracket} 
\newcommand{\leqnos}{\tagsleft@true\let\veqno\@@leqno}
\newcommand{\reqnos}{\tagsleft@false\let\veqno\@@eqno}
   \g@addto@macro{\UrlBreaks}{\do\/\do-\do\&\do.\do:}     
\def\url@leostyle{%
  \@ifundefined{selectfont}{\def\UrlFont{\sf}}{\def\UrlFont{\footnotesize\ttfamily}}}
\newcommand\Ccancel[2][black]{
    \let\OldcancelColor\CancelColor
    \renewcommand\CancelColor{\color{#1}}
    \cancel{#2}
    \renewcommand\CancelColor{\OldcancelColor}
}
\def\scrC{\mathscr C}
\newcommand{\cE}{\mathcal E}
\newcommand{\cH}{\mathcal H}
\newcommand{\cL}{\mathcal L}
\newcommand{\cP}{\mathcal P}
\newcommand{\fe}{\mathfrak e}
\newcommand{\fii}{\mathfrak i}
\newcommand{\fs}{\mathfrak s}
\newcommand{\fu}{\mathfrak u}
\newcommand{\fw}{\mathfrak w}
\newcommand*{\B}[1]{\bm{#1}}
\newcommand{\bw}{\B{w}}
\newcommand{\NN}{\mathbb{N}}
\newcommand{\ZZ}{\mathbb{Z}}
\newcommand{\FF}{\mathbb{F}}
\newcommand{\sdfrac}[2]{\mbox{\small$\displaystyle\frac{#1}{#2}$}}
\newcommand{\sdbinom}[2]{\mbox{\small$\displaystyle\binom{#1}{#2}$}}
\DeclareMathOperator*{\ind}{ind}
\renewcommand{\pmod}[1]{\left( \mathrm{ mod\;}#1\right)}
\theoremstyle{plain}
\newtheorem{theorem}{Theorem}
\newtheorem{lemma}{Lemma}[section]
\newtheorem*{question*}{Question}
\newtheorem{proposition}{Proposition}[section]
\newtheorem{conjecture}{Conjecture}
\theoremstyle{definition}
\newtheorem{remark}{Remark}[section]
\newtheorem*{remark*}{Remark}
\newtheorem*{example*}{Example} 
\newtheorem*{problem*}{Problem} 
\newtheorem{problem}{Problem} 
\newcommand{\mbinom}[2]{\mbinom{#1}{#2}}
\renewcommand{\mbinom}[3][0.8]{\scalebox{#1}{$\dbinom{#2}{#3}$}} 
\newcommand{\mfrac}[3][1.195]{\scalebox{#1}{$\frac{#2}{#3}$}}
\begin{document}


\title[Filtered rays over iterated absolute differences on layers of integers]
{Filtered rays over iterated absolute differences on layers of integers}

\author{Raghavendra N. Bhat}
\address[Raghavendra N. Bhat]{Department of Mathematics, University of Illinois, 1409 West Green 
Street, Urbana, IL 61801, USA}
\email{rnbhat2@illinois.edu}

\author[Cristian Cobeli]{Cristian Cobeli}
\address[Cristian Cobeli]{"Simion Stoilow" Institute of Mathematics of the Romanian Academy,~21 Calea Grivitei Street, P. O. Box 1-764, Bucharest 014700, Romania}
\email{cristian.cobeli@imar.ro}

\author{Alexandru Zaharescu}
\address[Alexandru Zaharescu]{Department of Mathematics, University of Illinois, 1409 West Green 
Street, Urbana, IL 61801, USA,
%
and 
"Simion Stoilow" Institute of Mathematics of the Romanian Academy,~21 
Calea Grivitei 
Street, P. O. Box 1-764, Bucharest 014700, Romania}
\email{zaharesc@illinois.edu}

\subjclass[2020]{Primary 11B37; Secondary 11B39, 11B50.}

\thanks{Key words and phrases:
Proth-Gilbreath Conjecture, formal power series, SP-numbers, gaps between primes.}

\begin{abstract}
The dynamical system generated by the iterated calculation of the high order gaps 
between neighboring terms of a sequence of natural numbers is remarkable and only 
incidentally characterized at the boundary by the notable Proth-Glibreath Conjecture 
for prime numbers.

We introduce a natural extension of the original triangular arrangement, 
obtaining a growing hexagonal covering of the plane. 
This is just the base level of what further becomes an endless discrete helicoidal surface.
Although the repeated calculation of higher-order gaps causes the numbers 
that generate the helicoidal surface to decrease, 
there is no guarantee, and most often it does not even happen, 
that the levels of the helicoid have any regularity, at least at the bottom levels.

However, we prove that there exists a large and nontrivial class of sequences 
with the property that their helicoids have all levels coinciding with their base levels. 
This class includes in particular many ultimately binary sequences with a special header.
For almost all of these sequences, we additionally show that although the patterns 
generated by them seem to fall somewhere between ordered and disordered, 
exhibiting fractal-like and random qualities at the same time, 
the distribution of zero and non-zero numbers at the base level has uniformity characteristics. 
Thus, we prove that a multitude of straight lines that traverse the patterns 
encounter zero and non-zero numbers in almost equal proportions.

\end{abstract}
\maketitle

\section{Introduction}
Let $\fu=\{a_k\}_{k\ge 0}$ be a sequence of non-negative integers.
We place the sequence $\fu$ on the top row of a triangle whose subsequent rows 
are recursively obtained as sequences of numbers given  by the absolute values 
of the differences between neighboring terms on the previous line.
The infinite equilateral triangle obtained in this way is defined by
\begin{equation}
\tag{P-G}\label{eqPGTriangle}
\setlength{\tabcolsep}{3pt}
   \begin{tabular}{ccccccccccccccccccccc}
 & $a_0$ && $a_1$ && $a_2$ && $a_3$ && $a_4$ && $a_5$ && $a_6$ && $\dots$ &\\[2mm]
& &  $d_0^{(1)}$   & &  $d_1^{(1)}$ &&  $d_2^{(1)}$ &&  $d_3^{(1)}$ &&  $d_4^{(1)}$ &&  $d_5^{(1)}$   && $\dots$ \\[2mm]
  &   && $d_0^{(2)}$  && $d_1^{(2)}$ && $d_2^{(2)}$ && $d_3^{(2)}$ && $d_4^{(2)}$ && $\dots$ &  &\\[2mm]  
   & &   &&  $d_0^{(3)}$ &&  $d_1^{(3)}$ &&  $d_2^{(3)}$ &&  $d_3^{(3)}$ &&  $\dots$   &  \\[2mm]
     &   &&  && $\dots$  && $\dots$  && $\dots$  && $\dots$  && &  &
  \end{tabular}
\end{equation}
where
\begin{equation}\label{eqDifferences}
   d_k^{(j+1)} := \big| d_{k+1}^{(j)} - d_{k}^{(j)}\big|
   \quad \text{ and } \quad d_k^{(0)} := a_k \quad \text{ for  $j, k\ge 0$.}\\[1mm]
\end{equation}
Let $\fw=\{b_j\}_{j\ge 0}$ be the sequence of numbers on the left edge of this triangle, that is,
$b_0=a_0$ and
$b_j = d_0^{(j)}$ for $j\ge 1$.
We also denote by $\fw_k=\big\{d_k^{(j)}\big\}_{j\ge 0}$, for $k\ge 0$, the column or the \textit{ray} that passes through the triangle parallel 
to the edge on the left, with the first component being $a_k$, so that $\fw=\fw_0$.

Taking successively the absolute values of higher-order differences, 
the resulting numbers on the lower rows become smaller and smaller. 
It is then natural to observe this comprehensive phenomenon on the left edge. 
However, the components of $\fw$ may not necessarily become all equal to $0$, 
but we should expect that from a certain point onwards they will take 
at most two values, one being zero and the other an integer different from zero,
provided~$\fu$ does not grow too fast. 
Considering, for example, the simpler revealing case where the components of $\fu$ 
only take the values $0$ or~$a$, where $a$ is a positive integer, 
then the numbers on the left edge are also only $0$'s or~$a$'s.

The famous case of this type of construction, where the top row is the sequence 
of prime numbers, is the subject of
the Proth-Gilbreath conjecture (see Proth~\cite{Pro1878}, Gilbreath~\cite{Gil2011},
Killgrove and Ralston~\cite{KR1959}, Odlyzko~\cite{Odl1993} and the problem sets 
of Guy~\cite[Example 12]{Guy1988},~\cite[Problem A10]{Guy2004} and 
\mbox{Montgomery~\cite[Appendix Problem 68]{Mon1994}}).
In accordance with common sense and extensive numerical observations (see~\cite{KR1959, Odl1993}), 
the conjecture states 
that all the entries of $\fw$ except $b_0$ are equal to $1$. 
\begin{conjecture}[Proth-Gilbreath]\label{ConjecturePG}
All the differences on the western edge of the~\eqref{eqPGTriangle} triangle
generated by the sequence of all primes are equal to $1$.
\end{conjecture}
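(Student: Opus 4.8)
Conjecture~\ref{ConjecturePG} is a celebrated open problem, so what follows is not a route to an unconditional proof but the framework into which any proof must fit, together with the partial statements that are genuinely within reach; the plan is to proceed in two stages, and the honest expectation is that the second cannot presently be completed. The first stage is the reduction modulo $2$. Since $|u-v|\equiv u+v\pmod{2}$, the array~\eqref{eqPGTriangle} read modulo $2$ is the $\FF_2$-linear Pascal--Sierpi\'nski triangle, and iterating~\eqref{eqDifferences} yields
\begin{equation*}
   d_k^{(j)}\equiv \sum_{i=0}^{j}\binom{j}{i}\,a_{k+i}\pmod{2},\qquad j,k\ge 0 .
\end{equation*}
When $\fu$ is the sequence of all primes one has $a_0=2\equiv 0$ and $a_i\equiv 1\pmod{2}$ for $i\ge 1$, hence $d_0^{(j)}\equiv 2^{j}-1\equiv 1\pmod{2}$ and $d_k^{(j)}\equiv 2^{j}\equiv 0\pmod{2}$ for all $j\ge 1$ and $k\ge 1$. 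So from row $1$ on, the whole difference array of the primes collapses modulo $2$ to the constant row $1,0,0,0,\dots$; in particular each $b_j=d_0^{(j)}$ of $\fw$ with $j\ge 1$ is odd, and the true content of the conjecture is that these odd numbers never exceed $1$.

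The second stage is a finitisation in the spirit of Odlyzko. By~\eqref{eqDifferences} the edge entry $d_0^{(j)}$ depends only on the header $a_0,\dots,a_j$; and if every entry of some row $n$ in columns $0,\dots,L$ lies in $\{0,1\}$, then, because $|u-v|\in\{0,1\}$ for $u,v\in\{0,1\}$, an immediate induction shows that every entry of row $n+t$ in columns $0,\dots,L-t$ lies in $\{0,1\}$ for $0\le t\le L$. Together with the first stage — an odd number that lies in $\{0,1\}$ must equal $1$ — this means that producing a single sufficiently wide all-$\{0,1\}$ window at some row $n\ge 1$ certifies $b_n=b_{n+1}=\dots=b_{n+L}=1$, i.e.\ the conjecture down to any prescribed finite depth; for the primes such a window appears after only a modest number of rows, and the resulting verification has been carried out for all primes below $10^{13}$ by Odlyzko~\cite{Odl1993}. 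The statement is made plausible by the heuristic that the successive difference rows behave like pseudorandom $\{0,1\}$-strings: such a string exhibits an all-$\{0,1\}$ prefix of any prescribed length eventually with probability $1$, and inside the $\{0,1\}$ regime the west edge is driven to $0$ only with probability $0$.

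The hard part — the reason the problem remains open — is the passage from ``for all $j\le J$'' to ``for all $j$''. No structural theorem is known that would force the all-$\{0,1\}$ regime, once reached in a finite window, to reproduce itself down the triangle: a priori an entry $\ge 3$ could be injected into the decisive left columns from far to the right at an arbitrarily late stage, and ruling this out seems to require quantitative control over the interplay between the fine distribution of the primes (which dictates the top two rows) and the chaotic combinatorics of iterated absolute differences, neither of which is understood at the level needed. This is exactly the difficulty that the present paper circumvents: rather than the primes one singles out a large explicit family of sequences — the ultimately binary ones with a suitable header — for which the analogue of all-$\{0,1\}$ stability \emph{can} be established, so that the west edge $\fw$, and indeed every ray $\fw_k$ on every level of the associated helicoid, becomes amenable to an unconditional analysis.
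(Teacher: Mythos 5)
You have correctly recognized that the statement you were asked to prove is the Proth--Gilbreath Conjecture itself, which is open; the paper likewise offers no proof (it is stated as Conjecture~\ref{ConjecturePG}, and the authors explicitly note that it is not even known whether $\fw$ contains infinitely many $1$'s). So there is no ``paper's own proof'' to compare against, and your decision to present the reduction and the obstruction rather than a fake argument is the right one. The partial statements you do assert are correct: since $|u-v|\equiv u+v\pmod 2$, one gets $d_k^{(j)}\equiv\sum_{i=0}^{j}\binom{j}{i}a_{k+i}\pmod 2$, and with $a_0=2$ and all later primes odd this forces every $b_j$ with $j\ge 1$ to be odd and every interior entry of rows $j\ge 1$ to be even; and the finite-window propagation argument (an all-$\{0,1\}$ block of width $L$ at row $n$ certifies $b_n=\dots=b_{n+L}=1$ once parity is known) is exactly the mechanism behind Odlyzko's verification. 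This is consistent with the paper's own framing, where the mod-$2$ structure reappears as Theorem~\ref{TheoremT} and Figure~\ref{FigLeftEdgePSPs}.

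The one thing to flag is that your text must not be mistaken for a proof of the Conjecture: the entire difficulty is the step you name in your last paragraph, namely that no known argument forces the $\{0,1\}$ regime to persist to arbitrary width, because column $0$ at depth $j$ depends on the first $j+1$ primes and hence on prime gaps arbitrarily far out. Your closing claim that the paper ``circumvents'' this by establishing all-$\{0,1\}$ stability for ultimately binary sequences is slightly overstated in one respect: for genuinely binary generators ($\fu\in\cL_0$) stability is trivial (differences of $0$'s and $1$'s stay in $\{0,1\}$), and the paper's actual content there is the involution $\Upsilon^{(2)}=\mathrm{id}$ and the equidistribution of Theorem~\ref{TheoremAlmostAllSeq}, not a stability theorem that could transfer back to the primes. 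With that caveat, your assessment of what is provable and what is not is accurate.
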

However, to this day, in the sequence that is believed, 
as stated in Conjecture~\ref{ConjecturePG},
to be constantly equal to $1$, it is not even known if there are an infinite number 
of $1$'s.

In fact, the reality generated by the primes is even more intriguing 
beyond the left edge~$\fw_0$. 
Inside the~\eqref{eqPGTriangle} triangle, the phenomenon is exactly at the opposite
end. First, let us observe that on $\fw_1$, the immediately adjacent parallel line to
the left edge, there are only $0$'s or $2$'s, 
according to Conjecture~\ref{ConjecturePG}. 
Furthermore, numerical evidence shows that these values are present on this line in approximately equal proportions.
And likewise, when moving inside towards the right, on the following parallel rays
$\fw_j$, $j\ge 2$, it is also very likely that the same fact holds true
 (see Table~\ref{TableRaysP}).
\begin{conjecture}\label{ConjecturePGfull}
Let $\fw_j$, $j\ge 1$, be any line parallel to the left edge of the~\eqref{eqPGTriangle}
triangle generated by the sequence of all primes.
Denote by $\nu_d(n)$ the number of $d$'s among the first~$n$ elements of $\fw_j$.
Then, for $d\in\{0,2\}$, there exists a constant $c > 0$ 
and an integer $n_j$, such that
\begin{equation*}
    \Big|\nu_d(n)-\sdfrac{n}{2}\Big|< c \sqrt{n},\quad \text{ for $n\ge n_j$.}
\end{equation*}
\end{conjecture}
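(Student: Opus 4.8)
The plan is to separate the statement into a \emph{structural} part (each ray takes only the values $0$ and $2$, with finitely many exceptions) and an \emph{equidistribution} part (the $\sqrt n$ discrepancy). For the structural part I would first exploit that $|x-y|\equiv x+y\pmod2$, so that reducing the recursion~\eqref{eqDifferences} modulo~$2$ makes it linear, $d_k^{(j)}\equiv\sum_i\binom{j}{i}a_{k+i}\pmod2$. Since $a_0=2$ while every later prime is odd, the top row is $0,1,1,\dots$ modulo~$2$, and a one-line computation ($\sum_i\binom{j}{i}=2^j$) shows that for each $j\ge1$ the entry $d_k^{(j)}$ is odd precisely when $k=0$, hence even for all $k\ge1$. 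Granting Conjecture~\ref{ConjecturePG}, i.e.\ $d_0^{(j)}=1$ for all large $j$, the relation $1=d_0^{(j+1)}=|d_1^{(j)}-1|$ pins the ray $\fw_1$ down to $\{0,2\}$ from some row on. For the deeper rays $\fw_j$, $j\ge2$, parity alone only yields evenness, so one must additionally rule out the values $4,6,\dots$; numerically these survive only in a short initial transient, but showing that they eventually disappear is a boundedness statement for iterated prime gaps that is itself open and is the first genuine obstacle.

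The second ingredient is the observation that \emph{inside} the region where all entries already lie in $\{0,2\}$, dividing by $2$ turns the absolute-difference recursion into the exact XOR recursion $e_k^{(j+1)}=e_{k+1}^{(j)}\oplus e_k^{(j)}$, so the halved array there evolves by the same Sierpi\'nski/Lucas rule as the mod-$2$ reduction. Hence, for each $k\ge1$ and past the transient row $j_0$, the halved ray is a fixed linear image of the halved boundary row, $e_k^{(j_0+m)}=\bigoplus_{i\subseteq m}e_{k+i}^{(j_0)}$, where $i\subseteq m$ is the submask relation in base~$2$. Thus, for $d\in\{0,2\}$, the count $\nu_d(n)$ records the $m<n$ for which this XOR over the submasks of $m$ equals a prescribed bit, and the problem reduces to showing that the boundary row $\{e_k^{(j_0)}\}_k$ — a $\{0,1\}$-sequence manufactured from the primes — is \emph{incompressible} enough that such dyadic XOR-averages equidistribute. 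If its bits behaved like independent fair coins, the variables $e_k^{(j_0+m)}$, $m\ge0$, would be pairwise uncorrelated (the supports of their Lucas expansions have nonempty symmetric difference), a second-moment estimate would give variance of order $n$, and one would recover the square-root cancellation; the aim would be to replace this heuristic by an honest equidistribution input for the primes modulo powers of~$2$.

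The main obstacle is exactly supplying that input. Unconditionally there is no hope, because already the structural part contains Conjecture~\ref{ConjecturePG}. Conditionally, the cleanest route I see is to assume Conjecture~\ref{ConjecturePG}, assume the transient of exceptional values is finite (equivalently, a uniform bound on iterated prime gaps), and assume the derived boundary rows are normal with respect to the dyadic patterns appearing in the Lucas formula, and then to push these assumptions through the XOR transform and collect the errors. Two caveats persist even then. First, the natural probabilistic model predicts a discrepancy of order $\sqrt{n\log\log n}$ rather than $\sqrt n$, so the exact shape $<c\sqrt n$ in Conjecture~\ref{ConjecturePGfull} is delicate: it would need either a mild relaxation or a genuine reason why the prime-generated rays fluctuate less than a random sequence. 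Second, one must control uniformity in the ray index, since the statement lets the threshold $n_j$ depend on $j$ but demands a single constant $c$; I expect this to follow from the self-similarity of the Lucas kernel, and it is the point I would verify last.
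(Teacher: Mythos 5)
The statement you are trying to prove is Conjecture~\ref{ConjecturePGfull}, which the paper states as an open conjecture and does not prove; the only support offered is the numerical evidence of Table~\ref{TableRaysP}. So there is no proof in the paper to measure your attempt against, and your proposal --- which is explicitly a strategy sketch resting on unproved hypotheses rather than a proof --- is the honest assessment of the situation. Your structural observations are sound and consistent with the paper's own machinery: the reduction $|x-y|\equiv x+y \pmod 2$ linearizes the recursion, giving $d_k^{(j)}\equiv\sum_i\binom{j}{i}a_{k+i}\pmod 2$, which is exactly the identity~\eqref{eqbn} underlying Theorem~\ref{TheoremT}; and your parity computation showing that every entry $d_k^{(j)}$ with $k\ge 1$, $j\ge 1$ is even is correct and unconditional. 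But as you yourself note, everything past that point is open: the claim that each ray $\fw_j$ eventually takes only the values $0$ and $2$ (rather than $4,6,\dots$) is a boundedness statement about iterated prime gaps with no known approach, the case $j=1$ already requires Conjecture~\ref{ConjecturePG}, and the equidistribution step needs a normality-type input for prime-derived binary sequences under the Lucas/XOR kernel that is nowhere near current technology. A proposal whose every substantive step is conditional on an open conjecture is not a proof, and you should not present it as progress toward one.

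One point in your write-up deserves emphasis because it is a genuine criticism of the conjecture's literal formulation rather than of your argument: by the law of the iterated logarithm, a sequence of independent fair bits has $\limsup_n |\nu_d(n)-n/2|/\sqrt{n}=\infty$ almost surely, so the bound $|\nu_d(n)-n/2|<c\sqrt{n}$ for all $n\ge n_j$ with a fixed $c$ would \emph{fail} for the random model that motivates the heuristic. Hence the conjecture as stated asserts fluctuations strictly smaller than random, which would require some additional arithmetic rigidity; a formulation with $c\sqrt{n\log\log n}$, or with the bound holding for a density-one set of $n$, would be the one that the probabilistic heuristic actually supports. You are right to flag the uniformity of $c$ across the rays $\fw_j$ as a separate issue, but the $\sqrt{n}$ versus $\sqrt{n\log\log n}$ discrepancy is the more fundamental one.
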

We expect the same type of uniform distribution to occur on other rays that cross
the infinite~\eqref{eqPGTriangle}, for example, those that pick the differences 
at equally spaced intervals in different directions.
Similarly, in convex domains, that are sufficiently large and are situated far enough 
from the starting row, the number of $0$'s and the number of $2$'s should be 
approximately the same with probability~one.

The structure of the discrete dynamical system generated by iteratively calculating 
neighbor gaps recorded in the~\eqref{eqPGTriangle} triangle can be better understood 
when viewed in a broader context (see~\cite{CZZ2013}) that is related to phenomena occurring 
in Pascal's triangle (see~\cite{BCZ2023, CZ2013} and Prunescu et al.~\cite{CPZ2016,  Pru2022}) or 
the patterns of numbers generated in Ducci type games (see Caragiu, 
Zaki et al.~\cite{CCZ2000, CZ2023, CZ2014, CZZ2011, CZZ2014}).
\begin{figure}[ht]
 \centering
 \mbox{
 \subfigure{
    \includegraphics[width=0.292\textwidth]{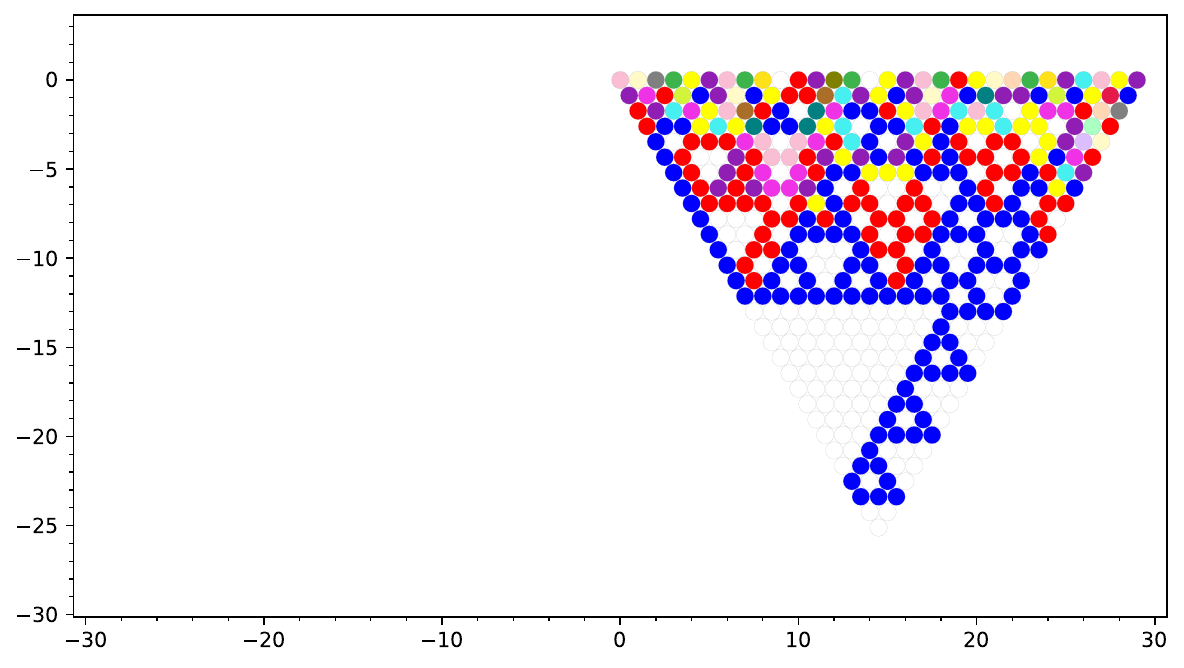}
 \label{FigureSPT1}
 }
 \subfigure{
    \includegraphics[width=0.292\textwidth]{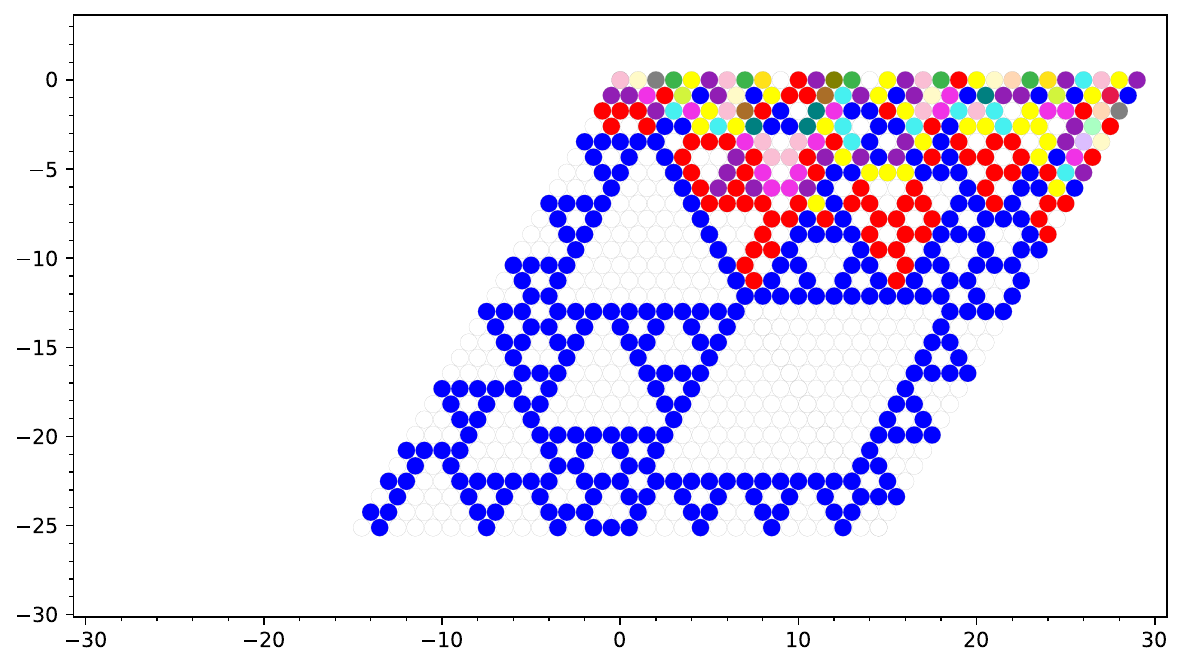}
 \label{FigureSPT2}
 }
  \subfigure{
    \includegraphics[width=0.292\textwidth]{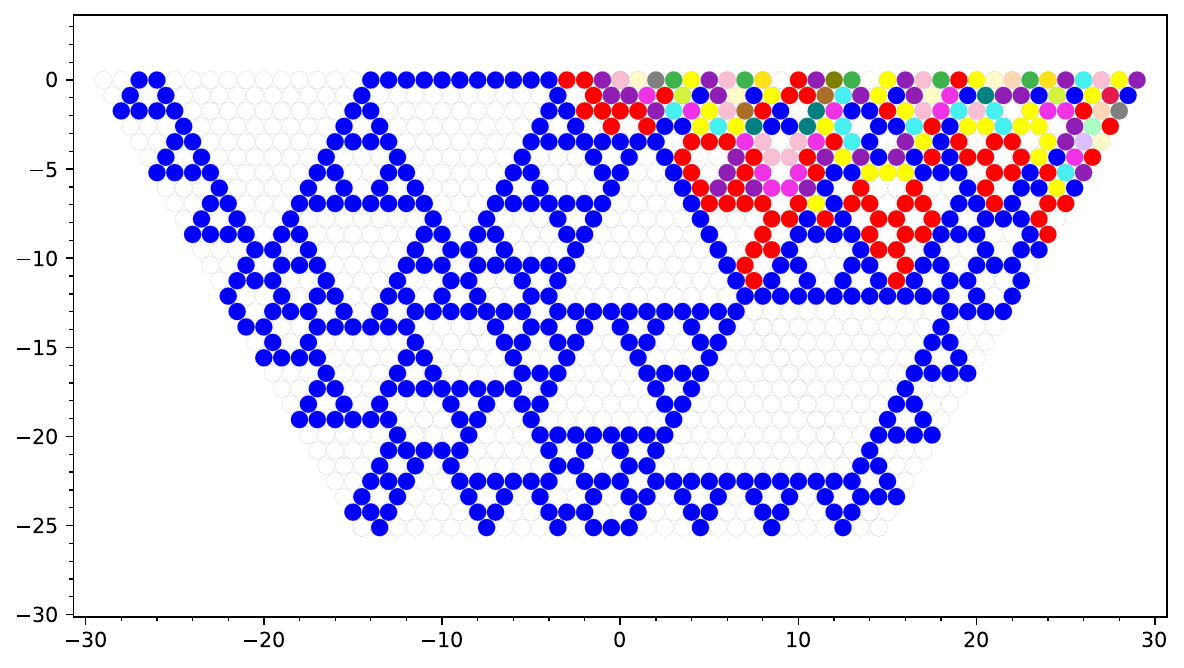}
 \label{FigureSPT3}
 }
 }
  \mbox{
  \subfigure{
    \includegraphics[width=0.30\textwidth]{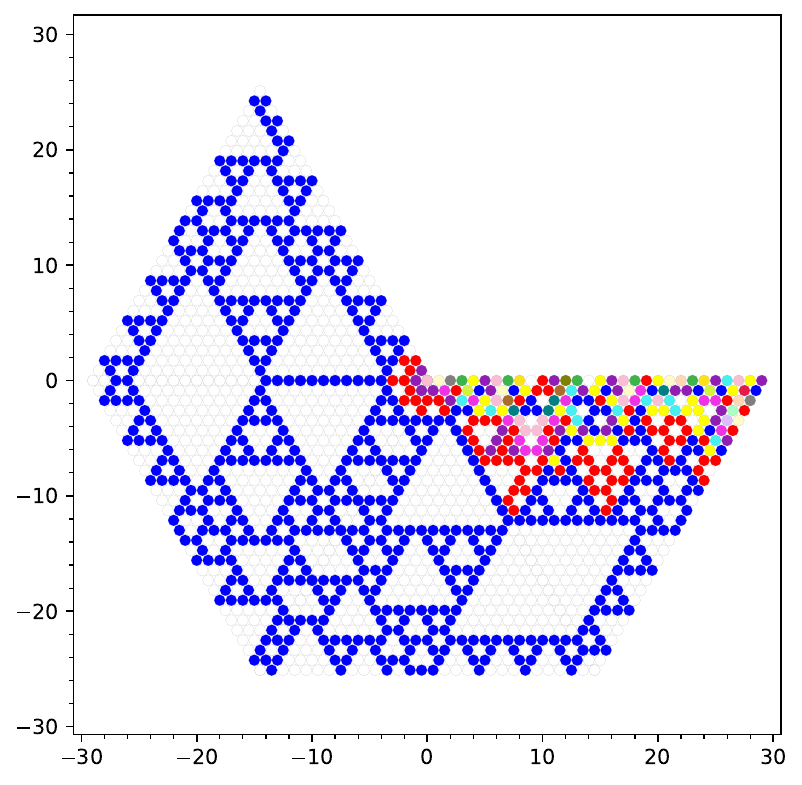}
 \label{FigureSPT4}
 }
 \subfigure{
    \includegraphics[width=0.30\textwidth]{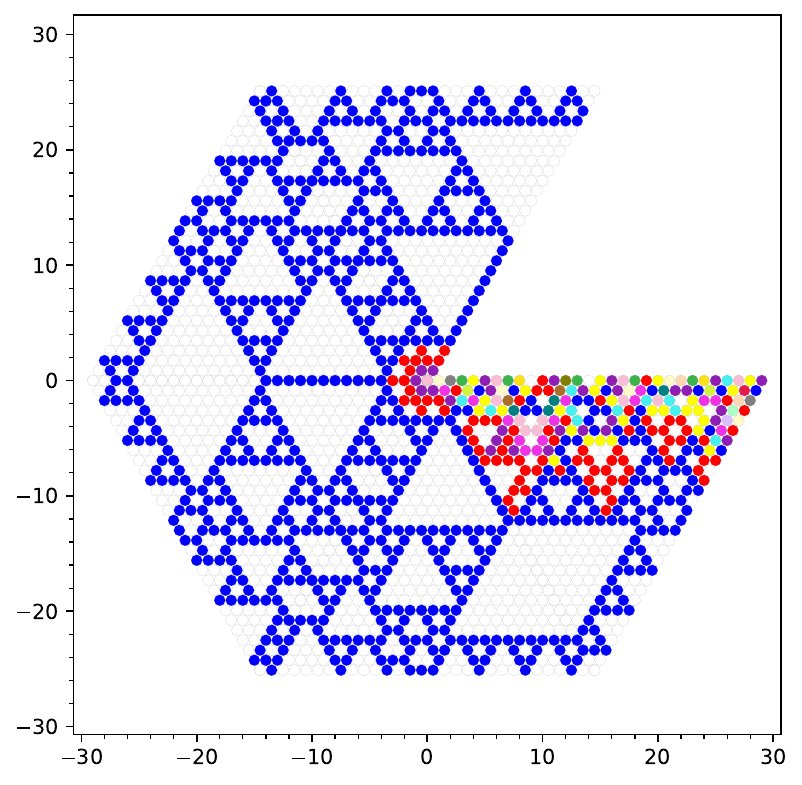}
 \label{FigureSPT5}
 }
  \subfigure{
    \includegraphics[width=0.30\textwidth]{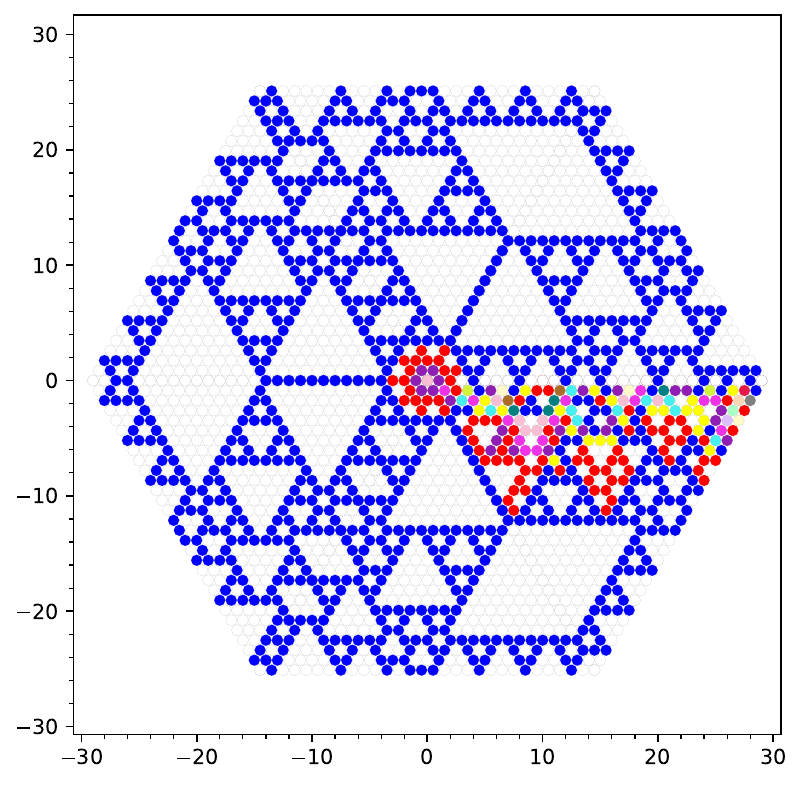}
 \label{FigureSPT6}
 }
 }
\caption{The triangle generated by $\fu$ that contains the first $30$ square-prime numbers.
(See Section~\ref{SectionSPS} for the formal definition and some appealing properties 
that the sequence of square-primes has.)
Then, with $\Upsilon^{(n)}(\fu)$ as initial rows, five more triangles are generated. 
The six figures represent the intermediate steps in forming the first layer of the helicoid.
}
 \label{FigureSPT16}
 \end{figure}

Let $\cL$ denote the set of sequences of non-negative integers, and let $\cL_0\subset\cL$ be 
the set of sequences with terms equal to $0$ or $1$, only. 
Similarly, for any integer $N\ge 0$, let $\cL(N)$ 
and $\cL_0(N)$ be the sets of finite sequences with 
$N$ non-negative integer elements, 
and sequences consisting only of $0$'s or $1$'s, respectively.
If $\fs$ is a sequence and $N\ge 0$, we denote by $\fs(N)$ 
the partial finite sequence formed by the first $N$ elements of $\fs$.

In this paper, we examine the overlying operator $\Upsilon$ that transforms the top sequence $\fu$ 
into the one on the left edge $\fw$ in the~\eqref{eqPGTriangle} triangle.
Then $\Upsilon$ is defined by
\begin{equation*}
   \begin{split}
   \Upsilon : \cL\to\cL \text{ and } \Upsilon(\fu) := \fw.
   \end{split}   
\end{equation*}
Let $\Psi : \cL\to\cL$ 
be the operator that transforms a horizontal row of the triangle into the immediately following row.
Note that the entire triangle~\eqref{eqPGTriangle} is composed of the sequence of 
successive horizontal rows $\Psi^{(j)}(\fu)$, for $j\ge 0$,
where $\Psi^{(0)}(\fu)=\fu$ is the top generating row.
Also note that the restrictions of $\Psi$ and $\Upsilon$ to $\cL_0$ have in their image 
only sequences in $\cL_0$. 
The same type of property occurs with the action of $\Psi$ and $\Upsilon$ on sequences 
$\fs$ of $0$'s and $1$'s, except for a finite number of terms. 
For these sequences, $\Psi(\fs)$ is also ultimately composed only of 
$0$'s and $1$'s, but this is not generally true for $\Upsilon(\fs)$.

The geometric correspondent of each iteration of $\Upsilon$ applied on $\fu$ results 
in the construction of a new equilateral triangle, 
rotated clockwise around the first component $a_0$ of $\fu$ by an angle of~$60$ degrees.
After six iterations of $\Upsilon$, the initial sequence $\fu$ is geometrically reached again. 
This results in the completion of the first \textit{layer} or \textit{level} of what can further 
be seen as a \textit{helicoidal discrete surface}, since in general \mbox{$\Upsilon^{(6)}(\fu)\neq \fu$ }
(see Figure~\ref{FigureSPT16} for an example of these iterations with a finite sequence of integers). 
Continuing the iterations produces a discrete \textit{helicoid} denoted $\cH=\cH(\fu)$, 
a ``Riemann surface''-like structure of non-negative integers. 
Let $\cH_n=\cH_n(\fu)$, $n\ge 1$, denote the $n$-th levels of the helicoid, so that 
\begin{equation}\label{eqHelicoid}
   \cH=\bigcup\limits_{n\ge 1}\cH_n.
\end{equation}
The first level $\cH_1$, called also the \textit{base layer}, is the union of 
six equilateral triangles with a vertex in $a_0$ and edges $\Upsilon^{(k)}(\fu)$ and $\Upsilon^{(k+1)}(\fu)$
for $k=0,\dots,5$. The subsequent levels are each generated similarly by their initial 
sequence $\Upsilon^{(6k)}(\fu)$ for $k\ge 0$.
Note that all layers are congruent geometrically, each of them covering the entire plane with numbers, 
if $\fu$ is infinite, or having the shape of a regular hexagon with $N+1$ numbers on each side 
if $\fu=(a_0,\dots,a_N)$ (see Figure~\ref{FigureHelicoid}).
\begin{figure}[th]
 \centering
    \includegraphics[width=0.99\textwidth]{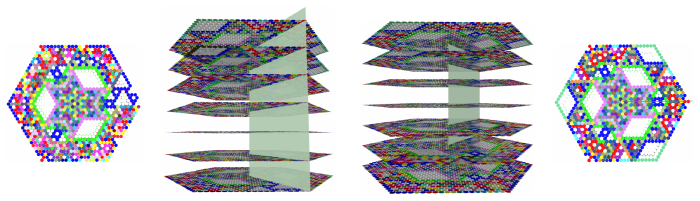}
\caption{
The first seven layers of the helicoid generated by the sequence 
$(100000, 59049, 32768, 16807, 7776, 3125, 1024, 243, 32, 1, 0, 1, 0, 0, 0, 0, 0, 1, 0, 0)$,
where the first positive integers are the first ten $5$th powers in decreasing order.
The four captures are taken in order from bottom, sides, and top. 
Distinct integers are shown in different colors. 
The helicoid has seven distinct layers, and starting from the $8$th, all 
layers coincide with the $7$th.  
The vertical strip indicates the places where the last outcome row on one layer 
transcends, becoming the first generating row on the upper layer.
}
 \label{FigureHelicoid}
 \end{figure}
The result of this process leads to a simple pattern when~$\fu$ is the sequence of prime numbers, which, at least under the assumption of the Proth-Gilbreath Conjecture,  is well understood. 
In this case \mbox{$\Upsilon^{(n}(\fu)=(2,1,1,1,\dots)$} for $n\ge 1$, so that
the interior of the corresponding generated equilateral triangles are then bounded  by 
sequences of $1$'s and have only $0$'s inside. 
\begin{figure}[th]
 \centering
 \mbox{
 \subfigure{
    \includegraphics[width=0.48\textwidth]{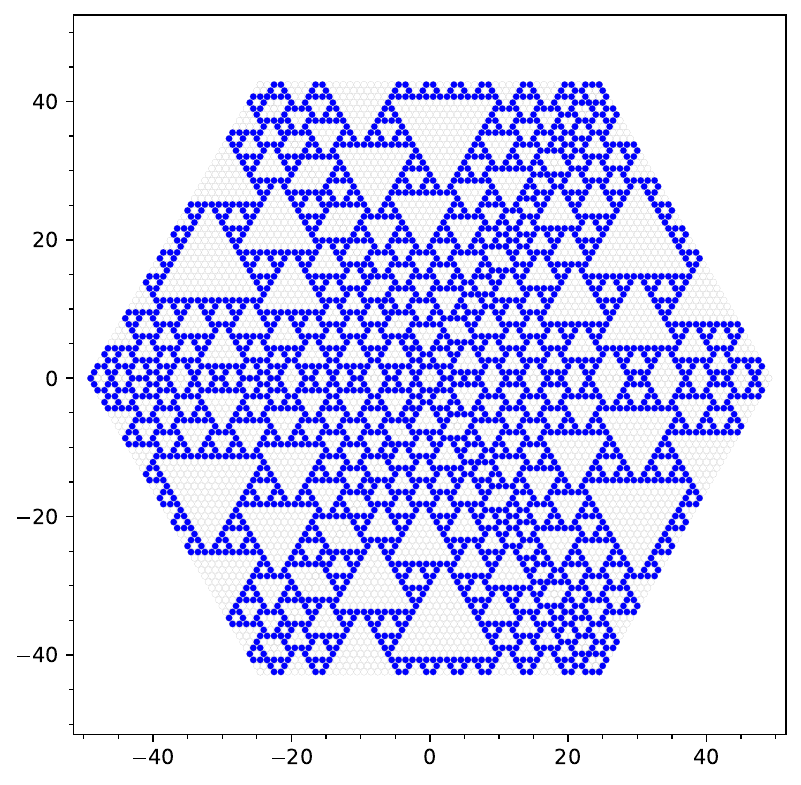}
 }
 \subfigure{
    \includegraphics[width=0.48\textwidth]{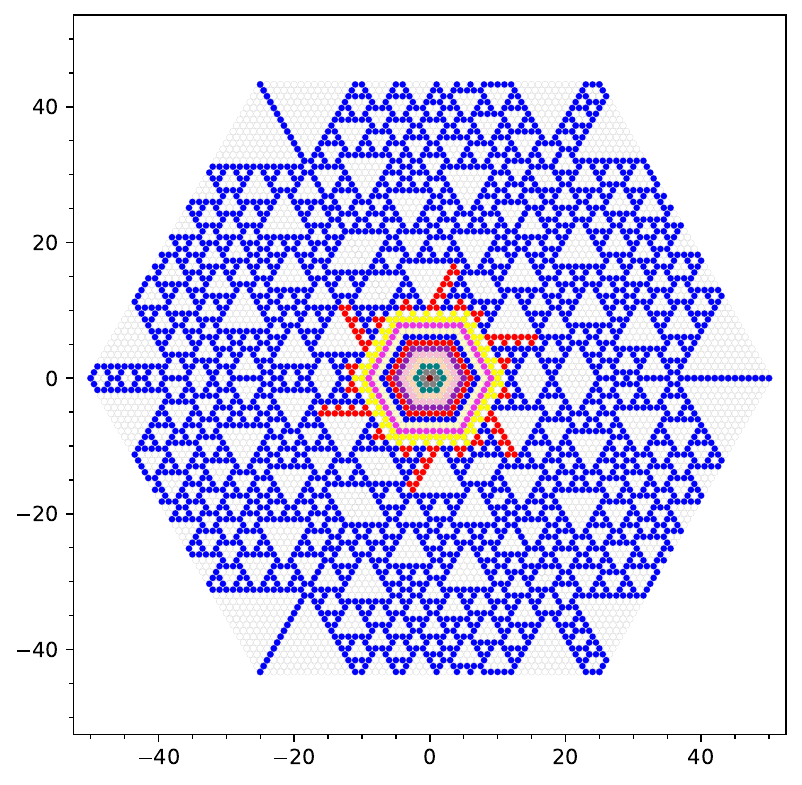}
 }
 }
\caption{
Left: The base layer of the helicoid generated by the prime-number-indicator 
function for integers in the interval $[0,50)$.
Right: The base layer of the helicoid generated by the sequence: 
$3072, 1536, 768, 384, 192, 96, 48, 24, 12, 6, 3$ (the first $10$ 
integers that are $3$ times a power of $2$ in decreasing order), 
followed by a sequence or `random bits' defined by 
$\ind_{[0,1/2)}\big(\{k\sqrt{2}\} \big)$ for $k=1,2,\dots,40$.
(Note that by Theorem~\ref{Theorem111}, in both cases, 
the helicoids have identical layers at each level.)
}
 \label{FigureTwoHexagons}
 \end{figure}
However, there is a much more interesting perspective if we mark in the initial sequence 
only the positions where prime numbers appear, their size being given 
by the rank of the terms in the sequence. 
To be precise, let $\fii=\{\ind{}_{\cP}(j) \}_{j\ge 0}$ be the indicator sequence of prime numbers, where
\begin{equation*}
   \ind{}_{\cP}(j) = 
   \begin{cases}
      1 & \text{ if $j$ is prime,}\\
      0 & \text{ else.}
   \end{cases}
\end{equation*}

Among our numerical experiments, we observed with surprise that the $6$th iteration of~$\Upsilon$ 
brings $\fii$ back to its starting point. Indeed, the helicoid generated by $\fii$ with its 
leaves composed of six equilateral triangles each, actually has only one leaf, 
all the subsequent that follow being identical to the first modulo a rotation or a reflection.

We will show that this phenomenon is not unique and that there is a large class of generating sequences 
$\fu$ that have the same property $\Upsilon^{(6)}(\fu)=\fu$.
In Theorem~\ref{Theorem111} below we consider these sequences and note, according to Theorem~\ref{TheoremAlmostAllSeq}, that their class includes a multitude of
sequences that are expected to exhibit a random behavior, like the indicator function of primes has.

\section{The main results}

The problem of demonstrating the uniform distribution of zeros and ones on 
the sides or on certain rays that cross  
the~\eqref{eqPGTriangle} triangle generated by some generic initial sequence is not within our reach.
However, beyond the specific results that can be obtained in particular cases, 
we can prove the following existence result with square-prime numbers.

\begin{figure}[ht]
 \centering
 \mbox{
 \subfigure{
    \includegraphics[width=0.48\textwidth]{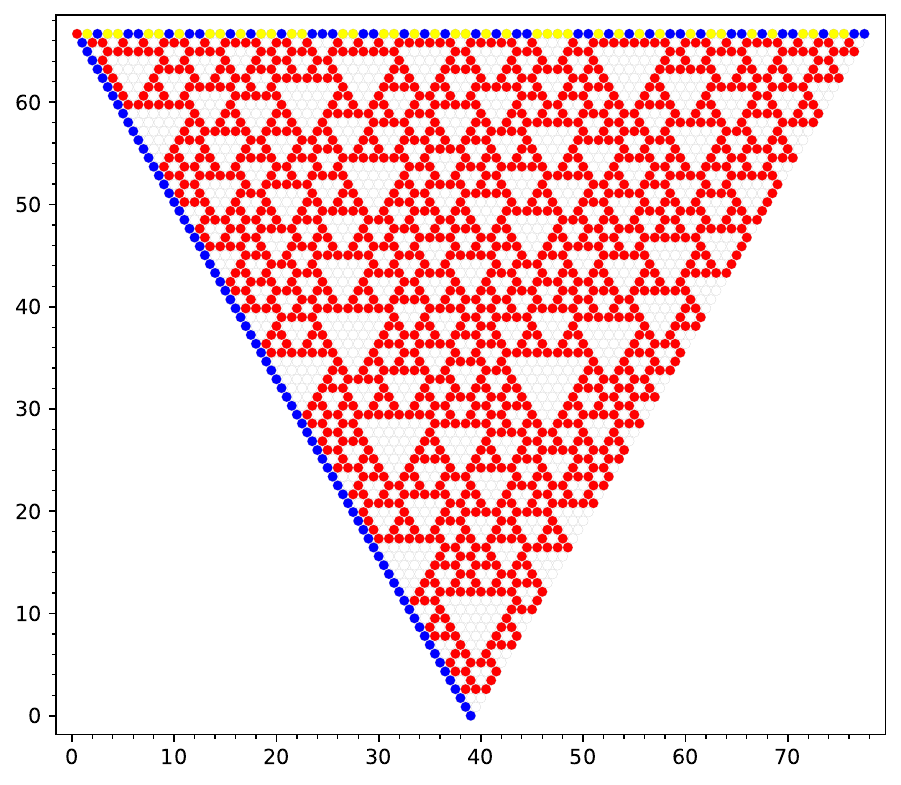}
 \label{FigLeftEdgeSPs1}
 }
 \subfigure{
    \includegraphics[width=0.48\textwidth]{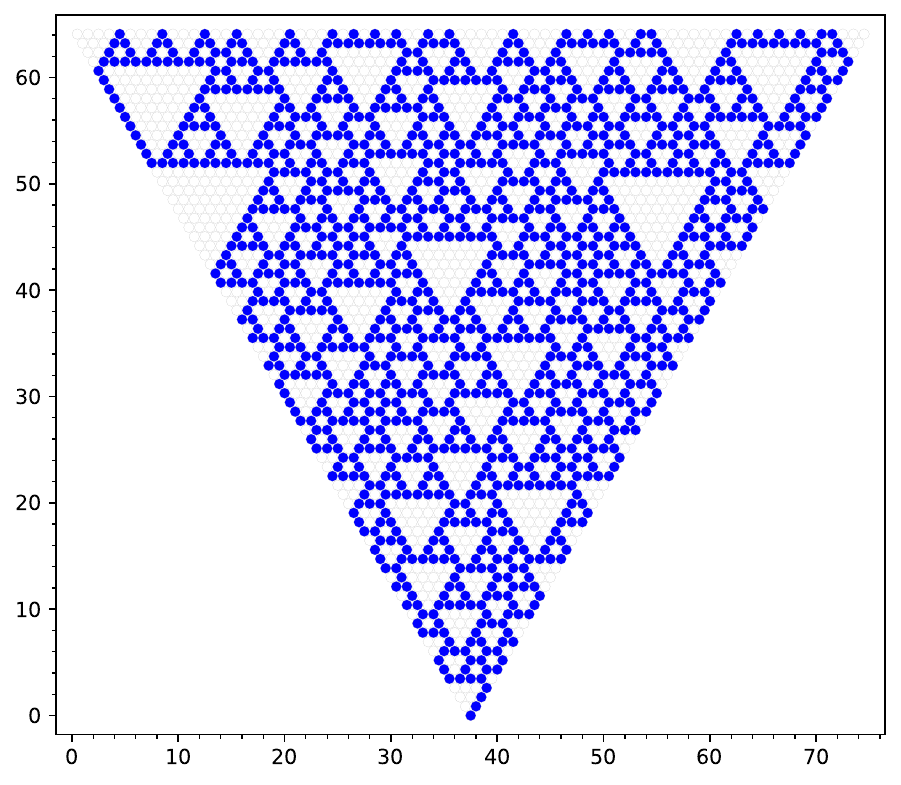}
 \label{FigLeftEdgeSPs2}
 }
 }
\caption{
Two cut-offs of the~\eqref{eqPGTriangle} triangles with integers
taken modulo $4$ (left) and modulo $2$ (right).
In the image on the left, the triangle is generated by the first~$78$ primes less than $400$, and 
in the image on the right, the top row contains the $75$ square-prime numbers less than $400$.
}
 \label{FigLeftEdgePSPs}
 \end{figure}

\begin{theorem}\label{TheoremSPGP}
There exists an infinite subsequence of square-primes $A_1 < A_2 < \ldots$ such that 
the ~\eqref{eqPGTriangle} triangle generated by $A_1, A_2, \ldots$ on the first row 
has $1$ as every other element on the left edge.
\end{theorem}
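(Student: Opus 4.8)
The goal is to extract an infinite subsequence $A_1 < A_2 < \cdots$ of square-primes so that the leftmost ray $\fw_0$ of the \eqref{eqPGTriangle} triangle generated by $(A_1, A_2, \dots)$ equals $(A_1, 1, 1, 1, \dots)$ after its first entry — equivalently, so that $\Upsilon(A_1, A_2, \dots)$ has the form $(*, 1, 1, \dots)$, with alternating $1$'s and $0$'s appearing as ``every other element.'' Wait — re-reading the statement, ``$1$ as every other element on the left edge'' means the edge looks like $(b_0, 1, b_2, 1, b_4, 1, \dots)$ or $(1, b_1, 1, b_3, \dots)$. The natural approach is to \emph{reduce modulo $2$}: the absolute-difference recursion \eqref{eqDifferences} commutes with reduction mod $2$ (since $|x-y| \equiv x+y \pmod 2$), so the parity pattern on the edge is governed entirely by the initial parity sequence, and over $\FF_2$ the difference operator becomes the linear shift-plus operator $x_k \mapsto x_k + x_{k+1}$. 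Thus I first record that if the top row reduced mod $2$ is a sequence $(\epsilon_0, \epsilon_1, \dots) \in \FF_2^{\NN}$, then the left edge mod $2$ is $\big(\sum_j \binom{j}{0}\epsilon_j, \sum_j \binom{1}{j}\epsilon_j, \dots\big)$ — more precisely $b_j \equiv \sum_{i=0}^{j} \binom{j}{i}\epsilon_i \pmod 2$, a Pascal-triangle convolution. So controlling the edge mod $2$ amounts to controlling these binomial sums of the chosen parities.

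\textbf{Main construction.} The plan is to choose the $A_i$ greedily so that their parities $\epsilon_i := A_i \bmod 2$ form a prescribed periodic (or eventually periodic) binary string whose Pascal convolution down the left edge is the alternating string $1,0,1,0,\dots$ (or $0,1,0,1,\dots$) from some point on. A clean choice: force $\epsilon_i$ to alternate, say $\epsilon_i = i \bmod 2$, and check that the resulting edge mod $2$ is eventually $(\dots,1,0,1,0,\dots)$; or, if that particular pattern does not self-reproduce correctly, use Lucas' theorem to pick a parity string that is a fixed point (up to a shift) of the $\FF_2$-linear edge map. The key number-theoretic input is that square-primes — integers of the form $p^2$ or more generally in the square-prime sequence defined in Section~\ref{SectionSPS} — are abundant enough and flexible enough in their residues mod $2$ (indeed mod any fixed modulus, by Dirichlet / the infinitude of primes in each admissible class, together with the fact that $p^2 \equiv 1 \pmod 8$ for odd $p$ and $4 \equiv 4 \pmod 8$ for $p=2$) that one can always find the next square-prime $A_{i+1} > A_i$ with the required parity $\epsilon_{i+1}$. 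Since both even and odd square-primes are infinite in number (odd ones are $p^2$ for odd primes $p$; the even one is just $4$ — so in fact I should work mod something where I have infinitely many square-primes in \emph{each} needed class, which forces the prescribed parity string to be \emph{eventually constant equal to the odd parity}, i.e. eventually all $\epsilon_i = 1$), the greedy selection never gets stuck.

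\textbf{Reconciling the parity budget.} Because there is essentially only one even square-prime, the honest version of the construction must take $\epsilon_i = 1$ for all $i$ (all chosen square-primes odd, i.e. of the form $p^2$ with $p$ odd), and then compute the left edge of the mod-$2$ triangle whose top row is the all-ones sequence. That triangle is the classical Sierpiński/Pascal mod $2$ pattern started from $(1,1,1,\dots)$: its rows are $\Psi^{(j)}$ applied to the constant $1$, and one checks directly from $x_k \mapsto x_k + x_{k+1}$ that $(1,1,1,\dots) \mapsto (0,0,0,\dots)$ in one step — so the edge is $(1,0,0,0,\dots)$, which is \emph{not} ``$1$ every other element.'' Hence the all-odd choice alone is too rigid, and the real content of the theorem is that we are allowed to \emph{insert a controlled finite header} of square-primes (and then continue with a tail) engineered so that the convolution produces the alternating pattern. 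Concretely I would: (i) work mod $4$ rather than mod $2$ (cf. Figure~\ref{FigLeftEdgePSPs}, which displays exactly the mod $4$ and mod $2$ triangles for square-primes), exploiting that odd square-primes are all $\equiv 1 \pmod 8$ while one has the extra residue $4$ available once — the point being that the iterated-difference dynamics mod $4$ has richer behavior than mod $2$ and can sustain a period-$2$ column; (ii) pin down a finite initial block $A_1, \dots, A_m$ of square-primes realizing a target residue string mod $4$ whose Pascal-type convolution is the desired $1,0,1,0,\dots$; (iii) verify this pattern is \emph{self-sustaining}, i.e. once the edge reads $1,0,1,0,\dots$ locally, the recursion \eqref{eqDifferences} keeps reproducing it regardless of how the deeper interior evolves — this is the crucial stability lemma and will likely use that $|1 - 0| = 1$, $|0-1|=1$, $|1-1|=0$, $|0-0|=0$, so the string $1,0,1,0,\dots$ maps to $1,1,1,\dots$ which maps to $0,0,0,\dots$: so in fact the \emph{pure} alternating edge is also not self-reproducing, meaning the ``every other element is $1$'' must be read as a statement about the \emph{final edge sequence} $\fw_0$ read top-to-bottom, not about any single row.

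\textbf{The real obstacle.} The main difficulty, therefore, is \emph{not} the number theory (finding square-primes in prescribed residue classes is routine given the infinitude of primes and, if needed, elementary sieve bounds) but rather the combinatorial design problem: exhibiting a single infinite admissible residue string $(\epsilon_i)$ — admissible meaning each $\epsilon_i$ is actually attained by infinitely many square-primes above any bound, so realizable greedily — whose downward Pascal convolution along the western edge is exactly $b_0, 1, b_2, 1, b_4, 1, \dots$. I expect this to be handled by choosing $(\epsilon_i)$ eventually equal to the fixed point of the $\FF_2$-linear edge operator that has alternating support — using generating functions in $\FF_2[[x]]$, where the edge map corresponds to $F(x) \mapsto F(x)/(1+x)$ evaluated appropriately, so that one solves $(1+x) \cdot (\text{edge series}) = (\text{top series})$ with edge series $= \frac{1}{1+x^2} = 1 + x^2 + x^4 + \cdots$ (the alternating-$1$ column), forcing the top series to be $\frac{1+x}{1+x^2} = \frac{1}{1+x} = 1 + x + x^2 + \cdots$ over $\FF_2$, i.e. $\epsilon_i \equiv 1$ for all $i$ — which loops back to the rigidity problem and shows that a genuine understanding of the \emph{finite} truncation effects (the left edge of a triangle on finitely many, then more, square-primes, and how the ``every other'' pattern emerges in the limit) is unavoidable. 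So the plan is: set up the $\FF_2$ (or $\ZZ/4$) generating-function dictionary; identify the target edge series; solve for the top series; repair the non-realizability by passing to an eventually-constant parity tail plus a finite header, using the one even square-prime $4$ and the flexibility of $p^2 \bmod 2^k$ to tune the header; and finally prove the stability/limit lemma that the emergent edge of the growing triangles stabilizes to the claimed ``$1$ in every other position'' sequence. The header-engineering and the limit lemma are the two steps I expect to absorb almost all the work.
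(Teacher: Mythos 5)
Your proposal does not reach a proof, and the route it explores cannot work as stated. First, a definitional slip undermines the ``parity budget'': square-primes are the numbers $k^2p$ with $k\ge 2$ and $p$ prime (so $8,12,18,20,27,28,\dots$), not squares of primes. In particular there are infinitely many even square-primes (e.g.\ $4p$ for every prime $p$), $4$ itself is not a square-prime, and odd square-primes need not be $\equiv 1 \pmod 8$; the rigidity you deduce from ``essentially one even square-prime'' is therefore based on a false premise. Second, and more fundamentally, reducing mod $2$ (or mod $4$) can at best control the parity or residue of the left-edge entries, whereas Theorem~\ref{TheoremSPGP} asserts that every other entry of $\fw_0$ is literally the integer $1$. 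Differences of large square-primes that happen to be odd are generically large odd numbers, so no amount of residue engineering of the top row settles the actual values; your own closing paragraph concedes that the ``header-engineering and the limit lemma'' remain to be done, and no mechanism is offered that could force exact values down the edge. There is also a conceptual confusion in asking the alternating pattern to be ``self-sustaining'' under the row map $\Psi$: the left edge is a column determined by the top row, and nothing needs to reproduce itself row-to-row; what is needed is a way to prescribe edge entries by choosing the top row, which the mod-$2$ generating-function dictionary (correctly, $T(f)(X)=f\big(\tfrac{X}{1+X}\big)\tfrac{1}{1+X}$, as in Theorem~\ref{TheoremT}) does not provide at the level of integers.

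The paper's proof is of a different nature: it is an exact, inductive bordering construction. Starting from a small explicit square-prime triangle whose western edge already alternates $1$'s, one appends \emph{two} new square-primes $X<Y$ to the top row. Working backwards from the desired new southern vertex $Z=1$ (as in Proposition~\ref{PropositionBorderZ}), one finds that it suffices to have $X$ larger than an explicit sum of the current eastern-edge entries and $Y-X$ equal to an explicitly computed integer $\Delta$ depending only on that edge and $Z$; the intermediate entries $E_j,F_j$ are then checked to be non-negative, and the new bottom vertex is exactly $1$. The number-theoretic input is not residue classes at all but Lemma~\ref{LemmaSP2} together with Lemma~\ref{LemmaInfiniteGaps}: every positive integer occurs (infinitely often, with both terms arbitrarily large) as a difference of two square-primes, so the required pair $(X,Y)$ always exists and the induction runs forever, adding a $1$ at every other new position of the edge. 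If you want to salvage your approach, the missing ingredient is precisely such exact control of differences of square-primes, not control of their residues.
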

Note that Theorem~\ref{TheoremSPGP} implies that there are~\eqref{eqPGTriangle} triangles generated
by sequences of square-prime numbers for which at least half of the elements on the left edge are $1$.
 
\begin{table}[ht]
\setlength{\tabcolsep}{12pt}
\setlength{\extrarowheight}{0.5pt}
\caption{
The frequencies of the absolute values of the differences on the rays
$\fw_0,\dots,\fw_9$
that cross a cut-off of 
the~\eqref{eqPGTriangle} triangle passing parallel to its left edge.
The generating row contains the prime numbers less than one million.
Counting is done based on the values of the higher-order differences taken modulo $4$.
The notations are as follows:
$r$ is the rank of the ray, starting with the left edge $\fw$ that has rank $r=0$;
\mbox{$N$ is} the number of differences on the ray (note that there are no differences on the first row
of~\eqref{eqPGTriangle});
$z$ is the number of $0$'s, $t$ is the number of $2$'s, and $h$ is 
the number of values that are not equal to $0$ or $2$.
}
\centering
\footnotesize
\begin{tabular}{>{\scriptsize}lcccccr}
\toprule
$r$\ \ & $N$  &$z$&$t$& $z-t$ & $h$ & $(z-t)/N$ \\
\midrule
0 & 78497 & 0 & 0 & 0 & 78497 & 0.00000 \\
1 & 78496 & 39061 & 39435 & -374 & 0 & -0.00476 \\
2 & 78495 & 39272 & 39223 & 49   & 0 & 0.00062 \\
3 & 78494 & 39218 & 39275 & -57  & 1 & -0.00073 \\
4 & 78493 & 39405 & 39088 & 317  & 0 & 0.00404 \\
5 & 78492 & 39311 & 39180 & 131  & 1 & 0.00167 \\
6 & 78491 & 39030 & 39461 & -431 & 0 & -0.00549 \\
7 & 78490 & 39307 & 39182 & 125  & 1 & 0.00159 \\
8 & 78489 & 39276 & 39211 & 65   & 2 & 0.00083 \\
9 & 78488 & 39231 & 39256 & -25  & 1 & -0.00032\\
\bottomrule
\end{tabular}
\label{TableRaysP}
\end{table}
 \vspace*{11pt}
 
What is actually specific to the~\eqref{eqPGTriangle} constructions is not the singular 
fact that the edge on the left only has a special form, as it happens when the generator 
is the sequence of prime numbers and the constant shape of the edge on the left is given by 
almost an arithmetic accident. 
An impression of what happens most often can be seen in Figure~\ref{FigLeftEdgePSPs}, 
where the triangles generated by prime and square-prime numbers are placed side by side for comparison.

Then what is truly remarkable is that the rays $\fw_j$ parallel to the left edge 
tend to have a random sequence appearance. 
In particular, the sequences $\fw_j$ seem to become binary, 
and the statistics that count the number of the two types of elements indicate this. 
Thus, Table~\ref{TableRaysP} shows the closeness between the number of $0$'s and the number 
of $2$'s on rays $\fw_1,\dots,\fw_9$ for the partial~\eqref{eqPGTriangle} triangle generated by 
prime numbers less than one million and the high order differences taken modulo $4$.

To compare, the same property is observed in Table~\ref{TableRaysSP} 
when the first row begins with the square-prime numbers 
less than one million and differences taken modulo $2$. 
In this case, on each of the columns $\fw_0,\dots,\fw_9$, 
the distribution of the number of $0$'s and the number of~$1$'s is also almost evenly split in half.
And this is not only happening on the western edge, the same phenomenon occurs 
inside the~\eqref{eqPGTriangle} triangle as well. To quantify the distribution let us measure 
the rate of change on rays.

\begin{table}[ht]
\setlength{\tabcolsep}{12pt}
\setlength{\extrarowheight}{0.5pt}
\caption{
%
The frequencies of the absolute values of the differences on the rays
$\fw_0,\dots,\fw_9$
that cross a cut-off of 
the~\eqref{eqPGTriangle} triangle passing parallel to its left edge.
The generating row contains the $69179$ square-primes less than one million.
Counting is done based on the values of the higher-order differences taken modulo $2$.
The notations are as follows:
$r$ is the rank of the ray, starting with the left edge $r_0=\fw$;
\mbox{$N$ is} the number of differences on the ray (note that there are no differences on the first row
of~\eqref{eqPGTriangle});
$z$ is the number of $0$'s, $o$ is the number of $1$'s, and $h$ is 
the number of values that are not equal to $0$ or $1$.
}
\centering
\footnotesize
\begin{tabular}{>{\scriptsize}lcccccr}
\toprule
$r$ & $N$    & $z$    & $o$    & $z-o$ & $h$ & $(z-o)/N$ \\
\midrule
0 & 69178 & 34616 & 34559 & 57 & 3 & 0.00082 \\
1 & 69177 & 34684 & 34485 & 199 & 8 & 0.00288 \\
2 & 69176 & 34614 & 34556 & 58 & 6 & 0.00084 \\
3 & 69175 & 34439 & 34727 & -288 & 9 & -0.00416 \\
4 & 69174 & 34485 & 34681 & -196 & 8 & -0.00283 \\
5 & 69173 & 34808 & 34357 & 451 & 8 & 0.00652 \\
6 & 69172 & 34707 & 34458 & 249 & 7 & 0.00360 \\
7 & 69171 & 34471 & 34694 & -223 & 6 & -0.00322 \\
8 & 69170 & 34644 & 34522 & 122 & 4 & 0.00176 \\
9 & 69169 & 34689 & 34472 & 217 & 8 & 0.00314 \\
\bottomrule
\end{tabular}
\label{TableRaysSP}
\end{table}
\vspace*{11pt}

The next theorem demonstrates the existence of an underlying link between the horizontal and  
the vertical/diagonal rows of the~\eqref{eqPGTriangle} triangle.
\begin{theorem}\label{TheoremT}
   Let $\fu=(a_0,a_1,\dots)\in\cL_0$ be the first row of the~\eqref{eqPGTriangle} triangle 
and let $\fw=(b_0,b_1,\dots)$ be the sequence on its left-edge.
Let $f,g\in\FF_2[[X]]$
be the formal power series with coefficients in $\fu$ and $\fw$, respectively.
%
Let $T : \FF_2[[X]]\to\FF_2[[X]]$ be the operator 
defined by $T(f)=g$. Then:

1. The operator $T$ satisfies the following formula
\begin{equation}\label{eqOperatorT}
   \begin{split}
    \big(T(f)\big)(X) = f\Big(\sdfrac{X}{1+X}\Big)\cdot \sdfrac{1}{1+X}\,.
   \end{split}	
\end{equation}

2. The operator $T$ has the property $T^{(2)}(f)=f$ for any $f\in\FF_2[[X]]$, so that $T$ is
invertible and bijective.
\end{theorem}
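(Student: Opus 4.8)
The plan is to work out the action of the difference operator $\Psi$ directly on formal power series over $\FF_2$, and then identify $T$ as a change of variable. First I would observe that over $\FF_2$ the recursion \eqref{eqDifferences} loses the absolute value: $d_k^{(j+1)} \equiv d_{k+1}^{(j)} + d_k^{(j)} \pmod 2$, since $|x-y| \equiv x+y \pmod 2$. Thus if $\fu \in \cL_0$ and we pass to $\FF_2$, the $j$-th horizontal row $\Psi^{(j)}(\fu)$ has generating function $(1+X)^j f(X)$ — this is the standard finite-difference-to-shift identity, proved by an easy induction on $j$ using that multiplication by $1+X$ corresponds to sending $\{c_k\}$ to $\{c_k + c_{k+1}\}$. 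Here I must be slightly careful: over $\FF_2$ the entries of $\Psi^{(j)}(\fu)$ genuinely lie in $\{0,1\}$, so reduction mod $2$ is harmless; the subtlety flagged in the text about $\Upsilon(\fs)$ not staying binary does not affect the power-series identity, which is purely formal.

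Next I would extract the left edge. The entry $b_j = d_0^{(j)}$ is the constant term (coefficient of $X^0$) of $(1+X)^j f(X)$. So $g(X) = \sum_{j\ge 0} b_j X^j = \sum_{j \ge 0} \big[X^0\big]\big((1+X)^j f(X)\big)\, X^j$. To evaluate this, write $f(X) = \sum_m a_m X^m$ and compute $\big[X^0\big]\big((1+X)^j f(X)\big) = \sum_m a_m \big[X^0\big]\big(X^m (1+X)^j\big) = \sum_m a_m \binom{j}{m}\bmod 2$ — but rather than chase binomial coefficients, I would instead use a generating-function-of-generating-functions trick: $\big[X^0\big] h(X) = \frac{1}{2\pi i}\oint h(X)\frac{dX}{X}$ formally, or more cleanly, recognize that $\sum_{j\ge 0} (1+X)^j f(X) Y^j = \frac{f(X)}{1 - (1+X)Y}$ as an element of $\FF_2[[X]][[Y]]$, and $g(Y)$ is obtained by taking the coefficient of $X^0$ and then renaming $Y \to X$. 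Taking $\big[X^0\big]$ of $\frac{f(X)}{1-(1+X)Y}$: substitute so that $1-(1+X)Y$ becomes the relevant denominator; concretely, set $X = \frac{Y}{1+Y}$ (equivalently $1+X = \frac{1}{1+Y}$ over $\FF_2$, since $1 + \frac{Y}{1+Y} = \frac{1+Y+Y}{1+Y}$... — here I use $1+2Y = 1$ in characteristic $2$, giving $1+X = \frac{1}{1+Y}$), which is precisely the pole of the geometric series in the variable tracking $X^0$. Carrying out the residue/coefficient extraction yields $g(Y) = f\!\big(\tfrac{Y}{1+Y}\big)\cdot\tfrac{1}{1+Y}$, which after renaming $Y \to X$ is exactly \eqref{eqOperatorT}. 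I expect the coefficient-extraction step — justifying that $\big[X^0\big]\frac{f(X)}{1-(1+X)Y}$ equals the claimed composition, as a rigorous identity of formal power series in $Y$ rather than a formal manipulation — to be the main obstacle, and I would handle it by expanding $\frac{1}{1-(1+X)Y} = \sum_j (1+X)^j Y^j$, taking $\big[X^0\big]$ termwise (legitimate since each power of $Y$ has a well-defined $X^0$-coefficient), and matching against the Taylor expansion of $f\!\big(\tfrac{X}{1+X}\big)\tfrac{1}{1+X}$ in $X$ — noting that $\tfrac{X}{1+X} = X + X^2 + \cdots$ has zero constant term, so the composition is well-defined in $\FF_2[[X]]$.

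For part 2, I would simply compose $T$ with itself using \eqref{eqOperatorT}. Let $\phi(X) = \frac{X}{1+X}$. Over $\FF_2$ one checks $\phi(\phi(X)) = \frac{\phi(X)}{1+\phi(X)} = \frac{X/(1+X)}{1/(1+X)} = X$, so $\phi$ is an involution (using $1+\phi(X) = \frac{1}{1+X}$ as above). Then
\begin{equation*}
\big(T^{(2)}(f)\big)(X) = \big(T(f)\big)\!\big(\phi(X)\big)\cdot\frac{1}{1+X} = f\big(\phi(\phi(X))\big)\cdot\frac{1}{1+\phi(X)}\cdot\frac{1}{1+X} = f(X)\cdot(1+X)\cdot\frac{1}{1+X} = f(X),
\end{equation*}
where I used $\frac{1}{1+\phi(X)} = 1+X$. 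Hence $T^{(2)} = \mathrm{id}$, so $T$ is its own inverse and in particular a bijection of $\FF_2[[X]]$. The only things to double-check here are that all substitutions are valid in $\FF_2[[X]]$ (they are, since $\phi$ has zero constant term) and that the characteristic-$2$ cancellations $1+2(\cdot)=1$ are applied consistently.
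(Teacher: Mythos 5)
Your part 2 is correct and is essentially the paper's own argument: compose \eqref{eqOperatorT} with itself and use that $\phi(X)=X/(1+X)$ is an involution over $\FF_2$. The gap is in part 1, where the key identification is wrong as stated. Multiplication by $1+X$ sends a coefficient sequence $\{c_k\}$ to $\{c_k+c_{k-1}\}$, not to $\{c_k+c_{k+1}\}$; over $\FF_2$ the forward-difference operator acts on generating functions by $h(X)\mapsto\big((1+X)h(X)+h(0)\big)/X$, so the $j$-th row does \emph{not} have generating function $(1+X)^jf(X)$ (test $f=1$: row one is $(1,0,0,\dots)$, with generating function $1$, not $1+X$). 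More importantly, $b_j$ is not the constant term of $(1+X)^jf(X)$: that constant term equals $a_0$ for every $j$, so your recipe would output $g(X)=a_0/(1+X)$, which already fails for $f(X)=X$, where $b_j\equiv j\ (\mathrm{mod}\ 2)$ and $g(X)=X/(1+X)^2$. Likewise $\big[X^0\big]\big(X^m(1+X)^j\big)=\delta_{m,0}$, not $\binom{j}{m}$. Consequently the step you yourself flag as the main obstacle --- matching $\big[X^0\big]\frac{f(X)}{1-(1+X)Y}$ against $f\big(\frac{Y}{1+Y}\big)\frac{1}{1+Y}$ --- genuinely fails: the left side equals $\frac{a_0}{1+Y}$.

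The correct statement underlying your computation (and the one your final expression $\sum_m a_m\binom{j}{m}$ silently reverts to) is $b_j=\sum_{m}\binom{j}{m}a_m$ in $\FF_2$, i.e.\ $b_j=\big[X^j\big]\big((1+X)^jf(X)\big)$: $g$ is the \emph{diagonal} of $\frac{f(X)}{1-(1+X)Y}$, not its constant-term slice in $X$. You can repair the proof in two ways. (i) Prove $b_n=\sum_m\binom{n}{m}a_m$ by induction on $n$ --- this is the paper's formula \eqref{eqbn} --- then regroup by the coefficient $a_m$ and use $\sum_{n\ge m}\binom{n}{m}X^n=X^m/(1+X)^{m+1}$ (the paper's Lemma~\ref{LemmaPowerF}, reduced via the hockey-stick identity of Lemma~\ref{LemmaPascalR}) to obtain \eqref{eqOperatorT}; this is the paper's route and is the shortest fix. (ii) Keep your kernel idea but set it up as a genuine diagonal extraction, e.g.\ a formal-residue computation locating the ``small'' root $X=Y/(1+Y)$ of $X=(1+X)Y$; your substitution is the right one for that method, but the justification as written extracts the wrong coefficient, so as it stands part 1 is not proved.
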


The phenomenon of involution between $\fu$ and $\fw$ shown in Theorem~\ref{TheoremT} 
occurs even more exquisitely for all~\eqref{eqPGTriangle} triangles of bounded size. 
Consequently, an analogous version of Theorem~\ref{TheoremT} holds true 
for such triangles of finite size.
Here, polynomials play the role of the formal power series because they can be thought of 
as formal power series with a finite number of nonzero coefficients.

Let $R[X]^{[n]}$ denote the set of polynomials with coefficients in $R$ and degree at most $n$.
\begin{theorem}\label{TheoremTN}
   Let $N\ge 1$ be an integer and let $\fu=(a_0,a_1,\dots, a_{N-1})\in\cL_0(N)$ 
be the top row and let $\fw=(b_0,b_1,\dots,b_{N-1})$ be the sequence on the left-edge
of the~\eqref{eqPGTriangle} triangle of side $N$.
Suppose $a_0=b_0$ and let $f,g\in\FF_2[X]^{[N-2]}$, be the polynomials whose coefficients
are the components of $\fu$ and $\fw,$ respectively.
Let $T_N : \FF_2[X]^{[N-2]}\to\FF_2[X]^{[N-2]}$ 
defined by $T_N(f)=g$. Then:

1. The operator $T_N$ satisfies the following formula
\begin{equation}\label{eqOperatorTN}
   \begin{split}
    \big(T_N(f)\big)(X) \equiv f\Big(\sdfrac{X}{1+X}\Big)\cdot \sdfrac{1}{1+X}\pmod {X^N}\,.
   \end{split}	
\end{equation}

2. The operator $T_N$ has the property $T_N^{(2)}(f)=f$ for any 
$f\in\FF_2[X]^{[N-2]}$, so that $T_N$ is invertible and bijective.
\end{theorem}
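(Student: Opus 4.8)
\textbf{Proof plan for Theorem~\ref{TheoremTN}.}
The plan is to mirror the argument for Theorem~\ref{TheoremT} (which I take as granted) and then simply verify that everything survives reduction modulo $X^N$. First I would show that the relation $d_k^{(j+1)}=|d_{k+1}^{(j)}-d_k^{(j)}|$, when $\fu\in\cL_0(N)$ and all intermediate entries stay in $\{0,1\}$ (which, as the excerpt notes, is automatic for the restriction of $\Psi$ to $\cL_0$), becomes over $\FF_2$ simply $d_k^{(j+1)}\equiv d_{k+1}^{(j)}+d_k^{(j)}$. Packaging the $j$-th row into the polynomial $P_j(X)=\sum_k d_k^{(j)}X^k$, this row-to-row step is the map $P_j\mapsto (1+X^{-1})P_j$ truncated back to nonnegative powers; equivalently, shifting indices so the left edge is read off as constant terms, the passage down the left edge corresponds to repeatedly dividing by $X$ after adding a shift. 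The left-edge sequence $\fw$ is obtained by reading $b_j=d_0^{(j)}=[X^0]\big((1+X^{-1})^j P_0\big)$, and a generating-function bookkeeping of these coefficients collapses, exactly as in the infinite case, to $g(X)=f\!\big(\tfrac{X}{1+X}\big)\cdot\tfrac{1}{1+X}$ — but now every identity is an identity of \emph{formal power series truncated mod $X^N$}, since $d_k^{(j)}$ with $j+k\ge N$ lies outside the triangle of side $N$ and is irrelevant. This proves part~1, i.e.\ \eqref{eqOperatorTN}; the hypothesis $a_0=b_0$ is exactly the compatibility condition that makes the constant terms match and $T_N$ well defined on $\FF_2[X]^{[N-2]}$.

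For part~2, I would argue that the substitution $\sigma:X\mapsto \tfrac{X}{1+X}$ is an involution on $\FF_2[[X]]/(X^N)$: indeed $\sigma(\sigma(X)) = \tfrac{X/(1+X)}{1+X/(1+X)} = \tfrac{X/(1+X)}{(1+2X)/(1+X)} = \tfrac{X}{1+2X}\equiv X \pmod 2$, using $2\equiv 0$ in $\FF_2$, and this holds modulo $X^N$ as well since $\sigma$ has zero constant term so composition is compatible with truncation. Likewise the scalar factor $\tfrac{1}{1+X}$ is multiplicative under the same substitution in the right way: writing $T_N(f)(X)=\tfrac{1}{1+X}\,f(\sigma(X))$, one computes $T_N\big(T_N(f)\big)(X) = \tfrac{1}{1+X}\cdot \big(T_N(f)\big)(\sigma(X)) = \tfrac{1}{1+X}\cdot \tfrac{1}{1+\sigma(X)}\, f\big(\sigma(\sigma(X))\big)$, and since $1+\sigma(X) = 1+\tfrac{X}{1+X} = \tfrac{1}{1+X}$ over $\FF_2$, the two factors $\tfrac{1}{1+X}$ and $\tfrac{1}{1+\sigma(X)} = 1+X$ cancel, leaving $f(\sigma(\sigma(X))) = f(X) \pmod{X^N}$. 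Hence $T_N^{(2)}=\mathrm{id}$, so $T_N$ is its own inverse, in particular bijective. A small check is needed that the degree bound is respected: if $\deg f\le N-2$ then the truncation of $\tfrac{1}{1+X}f(\sigma(X))$ mod $X^N$ indeed has degree $\le N-1$, and the constant-term constraint $a_0=b_0$ cuts this down to the stated space $\FF_2[X]^{[N-2]}$ consistently — I would spell this out as the final routine step.

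The main obstacle I anticipate is \emph{not} the algebra of $\sigma$, which is the same formal identity as in Theorem~\ref{TheoremT}, but rather the careful verification that truncation modulo $X^N$ commutes with everything in sight: substitution into $f$, multiplication by $1/(1+X)$, and composition $T_N\circ T_N$. The point that makes this work is that $\sigma(X) = X/(1+X)$ has zero constant term and leading term $X$, so $\sigma(X)^m \in (X^m)$, and therefore the coefficient of $X^k$ in $f(\sigma(X))$ for $k<N$ depends only on the coefficients of $f$ up to degree $k\le N-1$ — nothing outside the triangle of side $N$ leaks in. I would isolate this as a short lemma ("truncation is a ring-and-substitution homomorphism compatible with $\sigma$ mod $X^N$") and then part~1 and part~2 follow by transporting the corresponding statements of Theorem~\ref{TheoremT} through it; concretely, part~1 is the combinatorial identification of the left-edge coefficients with the coefficients of $f(\sigma(X))/(1+X)$, done by the same induction on rows as in the infinite case but stopped at row $N-1$, and part~2 is the purely formal involution computation above, which never references degrees beyond what truncation already controls.
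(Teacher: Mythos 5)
Your proposal is correct and follows essentially the same route as the paper: the paper also deduces Theorem~\ref{TheoremTN} from Theorem~\ref{TheoremT} by embedding polynomials into $\FF_2[[X]]$ and observing that the first $N$ coefficients of $T(f)$ depend only on the first $N$ coefficients of $f$, which is exactly your truncation-compatibility lemma based on $\sigma(X)=X/(1+X)$ having zero constant term. Your explicit re-verification of the involution modulo $X^N$ is slightly more detailed than the paper's appeal to the restriction of $T$, but it is the same argument in substance.
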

\smallskip

As a follow-up of Theorems~\ref{TheoremT} and~\ref{TheoremTN},
we find that $\Upsilon^{(6)}(\fu)=\fu$ for all binary finite or infinite 
sequences $\fu$, which in particular proves  to be true 
the inceptive observation discussed at 
the end of the Introduction for the indicator function of primes.
The above theorems also imply that the helicoids generated by binary sequences 
have only one distinct layer, which is a three-petal hexagon.  
%
The next results also shows the necessity of an additional condition that must be 
fulfilled by the more general sequences in $\cL$ so that they also generate helicoids with just a single distinct layer.
The general statement includes both the case of infinite sequences and 
the case of finite sequences, as we consider the ring of polynomials embedded 
in the ring of formal power series, where polynomials have only a finite number of non-zero coefficients.

First, we define the concept of a champion in a sequence. 
We say that the term of rank~$n\ge 0$ in the sequence $\fs=\{a_k\}_{k\ge 0}$ of non-negative integers
is a \textit{champion of $\fs$}, or shortly a \textit{champion}, if $a_n>0$ and  $a_j < a_n$ 
for $0 \le j < n$.
Note that an unbounded sequence has infinitely many champions, 
and in a strictly increasing sequence all its terms are champions.
However, our point
is at the other end, at sequences with at most one champion.

\begin{theorem}\label{Theorem111}
Let $\fu\in\cL$ and let $\Upsilon : \cL \to\cL $ be the operator defined by $\Upsilon(\fu)=\fw$, 
where $\fw$  is the sequence of numbers 
obtained as the left edge of the~\eqref{eqPGTriangle} triangle generated by $\fu$. Then:
\begin{enumerate}
\setlength\itemsep{4pt}
\item
Any discrete helicoid, as defined by relation~\eqref{eqHelicoid},
which is generated by a finite 
or infinite binary sequences \mbox{$\fu\in\cL_0(N)$} 
or $\fu\in\cL_0$ has all
levels equal, that is \mbox{$\Upsilon^{(6)}(\fu)=\fu$}. 

\item
The base level of the helicoids generated by binary finite or infinite sequences are composed by three
equal diamonds rotated around the origin, and each of these 
diamonds is the union of two equilateral triangles 
that mirror each other with respect to 
the diagonal that joins the corners with obtuse vertices of the diamond. 

\item Let  $\fu\in\cL$ and suppose that $\Upsilon^{(6)}(\fu)=\fu$.
Then the sequence $\fu$  has at most one champion.
\end{enumerate}
\end{theorem}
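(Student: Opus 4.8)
The plan is to treat the three parts in order, deriving (1) and (2) from the algebraic involution of Theorems~\ref{TheoremT}--\ref{TheoremTN} and proving (3) by an extremal argument on the position of the maximum. The single observation that drives (1) and (2) is that for $x,y\in\{0,1\}$ one has $|x-y|=x+y\pmod{2}$. Hence on a binary sequence the integer difference process of~\eqref{eqPGTriangle} agrees entry-by-entry with the mod-$2$ process, the whole triangle stays binary, and $\Upsilon$ restricted to $\cL_0$ (respectively $\cL_0(N)$) is exactly the operator $T$ of Theorem~\ref{TheoremT} (respectively $T_N$ of Theorem~\ref{TheoremTN}) read on coefficient sequences; note $b_0=a_0$ always, so the hypothesis of Theorem~\ref{TheoremTN} is automatic. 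By part~2 of those theorems $T^{(2)}=\mathrm{id}$, so $\Upsilon^{(2)}(\fu)=\fu$; since $6=2\cdot 3$ this yields $\Upsilon^{(6)}(\fu)=\fu$ and thus $\Upsilon^{(6k)}(\fu)=\fu$ for all $k$, i.e. every level of $\cH(\fu)$ in~\eqref{eqHelicoid} equals the base level. This settles~(1).

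For~(2) I would use $\Upsilon^{(2)}(\fu)=\fu$ to see that the six edges $\Upsilon^{(k)}(\fu)$, $k=0,\dots,5$, of the base hexagon alternate $\fu,\fw,\fu,\fw,\fu,\fw$. I pair the six constituent equilateral triangles into three rhombi, joining triangle $2k$ with triangle $2k+1$ along their shared edge $\Upsilon^{(2k+1)}(\fu)=\fw$; each rhombus then has outer edges $\fu,\fu$ and diagonal $\fw$, and its two obtuse ($120^\circ$) vertices are the endpoints $a_0$ and the far end of $\fw$ of that shared diagonal. Since the edge pattern repeats with period two in the index, rotation by $120^\circ$ about $a_0$ carries each rhombus onto the next (the top rows $\Upsilon^{(j)}(\fu)$ and $\Upsilon^{(j+2)}(\fu)$ coincide), giving three congruent diamonds. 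The asserted mirror symmetry across the diagonal $\fw$ is the statement that the triangle generated by $\fu$ and the one generated by $\fw=\Upsilon(\fu)$ are reflections of each other. Writing the $\FF_2$ entries via the iterated-difference closed form $d_m^{(n)}=\sum_i\binom{n}{i}a_{m+i}$ and $b_k=\sum_\ell\binom{k}{\ell}a_\ell$, and noting that reflection interchanges the two oblique coordinates $(m,n)\leftrightarrow(n,m)$, the claim reduces to the reciprocity
\[
   \sum_i\binom{n}{i}a_{m+i}\equiv\sum_i\binom{m}{i}\,b_{n+i}\pmod{2}.
\]
Comparing coefficients of $a_\ell$, this is the congruence $\binom{n}{\ell-m}\equiv\sum_i\binom{m}{i}\binom{n+i}{\ell}\pmod{2}$, which I would confirm in one line: summing against $Z^\ell$ turns the right-hand side into $(1+Z)^n(2+Z)^m$, equal over $\FF_2$ to $(1+Z)^nZ^m$, which is exactly the generating function $Z^m(1+Z)^n$ of the left-hand side.

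For~(3) I argue the contrapositive: assuming $\fu$ has at least two champions, I exhibit an invariant that strictly moves, so $\Upsilon^{(6)}(\fu)\neq\fu$. Let $M=\max_k a_k$. Every entry of every triangle along the orbit lies in $[0,M]$, and $\max$ is non-increasing under $\Upsilon$, so $\Upsilon^{(6)}(\fu)=\fu$ forces $\max$ to be constantly $M$ on the orbit. Let $q(\fs)$ denote the leftmost position at which $\fs$ attains its maximum. Because two numbers in $[0,M]$ have absolute difference $M$ only when they are $0$ and $M$, the value $M$ can migrate left by at most one column per row; hence it first reaches the western edge no sooner than row $q(\fs)$, giving $q(\Upsilon(\fs))\ge q(\fs)$ for each $\fs$ in the orbit. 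Periodicity forces equality throughout, and tracing the forced configuration -- a zero immediately to the left of the advancing $M$ at each of the first $q$ rows -- shows by induction that $a_0=\dots=a_{q-1}=0$. Then the first positive term of $\fu$ already equals $M$, so $\fu$ has exactly one champion, contradicting the assumption of two.

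The easy part is~(1), an immediate corollary of the involution. In~(2) the technical heart is the reciprocity congruence, which happily collapses under the substitution $2+Z=Z$ valid over $\FF_2$. I expect the main obstacle to be~(3): one must pin down how the position of the maximum evolves under $\Upsilon$, and the crux is the one-column-per-row migration bound together with the rigidity extracted from the equality case, which is what forces the all-zero prefix and hence the single champion.
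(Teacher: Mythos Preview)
Your proofs of (1) and (2) follow the paper's line: both derive $\Upsilon^{(2)}=\mathrm{id}$ on $\cL_0$ from the involution $T^{(2)}=\mathrm{id}$ of Theorems~\ref{TheoremT}--\ref{TheoremTN}, and both read the three-diamond structure off the resulting periodicity $\fu,\fw,\fu,\fw,\fu,\fw$ of the six edges. You actually go further than the paper on (2), proving the mirror symmetry of each diamond via the binomial reciprocity $\binom{n}{\ell-m}\equiv\sum_i\binom{m}{i}\binom{n+i}{\ell}$ over $\FF_2$; the paper asserts this symmetry in one sentence, so your generating-function collapse $(1+Z)^n(2+Z)^m\equiv Z^m(1+Z)^n$ is a genuine addition.

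For (3) your route is different. The paper fixes an arbitrary champion $a=a_\rho$ with $\rho>0$ and walks around the hexagonal ``circle'' $\scrC(\rho)$ on the base layer: each step along $\scrC(\rho)$ is an absolute difference with a number on $\scrC(\rho-1)$, all of which are strictly below $a$, so once the value on $\scrC(\rho)$ drops below $a$ it can never climb back; since $\Upsilon^{(6)}(\fu)=\fu$ closes the circle at $a$, the whole circle equals $a$, forcing $\scrC(\rho-1)\equiv 0$ and by iteration the entire interior $\equiv 0$. This is local in $\rho$ and works verbatim for unbounded $\fu$. You instead track the leftmost position $q$ of the \emph{global} maximum $M$ along the orbit $\fu,\Upsilon(\fu),\dots,\Upsilon^{(5)}(\fu)$, show $q$ is non-decreasing (one-column-per-row migration), use periodicity to pin it constant, and cascade the forced anti-diagonal zeros to obtain $a_0=\cdots=a_{q-1}=0$. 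This is correct and arguably more elementary---it lives in a single triangle plus a six-term orbit rather than the hexagonal geometry---but as written it has a small gap: you set $M=\max_k a_k$, which need not exist when $\fu\in\cL$ is unbounded (and an unbounded sequence has infinitely many champions). The fix is immediate: the first $N$ entries of $\Upsilon(\fs)$ depend only on the first $N$ entries of $\fs$, so $\Upsilon^{(6)}(\fu)=\fu$ passes to every truncation $\fu(N)$, and you may run your argument on a finite truncation long enough to contain two champions. You should state this reduction.
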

If $\fu$ is a finite sequence, since the numbers in the helicoid $\cH(\fu)$ 
have a general tendency to decrease, in any case, all of them being 
at most equal to the largest component of $\fu$, 
it follows that the sequence $\{\cH_n(\fu)\}_{n\ge 1}$ 
of the hexagonal layers of numbers 
that compose~$\cH(\fu)$ is finally periodic. 
The sequence of these layers should then be composed of a precycle followed by a cycle, 
both of finite length. 
Note that the length of a precycle can be zero,
but the length of a cycle must always be positive. 
According to the first part of Theorem~\ref{Theorem111}, 
the length of the precycle is zero and the length of the cycle is one, for all binary generators $\fu\in\cL_0(N)$, where $N\ge 1$, as well as for infinite sequences in $\cL$.
Further investigation is needed to classify the generating sequences based on the 
length of the precycle or the cycle of the sequences of layers that their helicoids have. 
The following is an example of such a comprehensive problem.
\begin{problem}\label{Problem1}
 Let $P\ge 0$ and $C\ge 1$ be integers. Find $\fu\in\cL(N)$, for some $N\ge 1$, such that the sequence $\{\cH_n(\fu)\}_{n\ge 1}$ 
 of layers of the helicoid $\cH(\fu)$ has exactly $P+C$ distinct 
layers, grouped in a precycle
of length $P$, and followed by the endless repetition of a cycle of length~$C$.
\end{problem} 

For finite sequences with a single champion, we numerically tested various decreasing sequences
of integers followed by a sequence of $0$'s or $1$'s. 
We found that many such sequences produce helicoids 
with a sequence of layers composed by a precycle (possibly empty) and a cycle of length one.
Yet this is not generally observed, as for instance,  
the helicoid generated by the first positive $77$ seventh powers arranged 
in decreasing order and followed by 
the sequence of ten bits: $0,1,0,0,0,0,0,1,0,0$ generates a helicoid with $17$ distinct layers, 
of which $9$ are in a precycle, and $8$ are in the subsequent infinitely repeated cycle.

In Figure~\ref{FigureTwoHexagons} there are shown the base layer of two helicoids 
whose upper layers coincide with their initial ones.
In both cases the generating sequences have exactly one champion.
In Section~\ref{SectionRotationsSymmetriesHelix} we present several other examples of 
sequences with just one champion, whose helicoids have only one distinct level. 
Additionally, we show that the property of having a single champion is not sufficient to 
characterize the one-distinct-level helicoids.
Indeed, there are sequences with just one champion whose helicoids have 
more distinct levels, such as those in Figure~\ref{FigureAP3} (left) and 
Figure~\ref{FigurePowers45} (right), which have four and nine distinct levels, respectively.

For any ray $\fw_k$, for $k\ge 0$, that is parallel to the western edge $\fw_0$, 
let $R_\fw(k)$ denote the proportion of
zeros among the first $N$ components of the ray, that is,
\begin{equation}\label{eqDefRw}
    R_\fw(k) := \sdfrac{1}{N-k} 
    \#\Big\{ j : d_k^{(j)} = 1, \ 0\le j<N 
    \Big\}, \ \ \text{ for $0\le k\le N$}.
\end{equation}

Symmetrical with respect to vertical axes, let us consider the eastern edges 
of the cut-off~\eqref{eqPGTriangle}. For any fixed integer $N\ge 0$, denote by $\fe_0, \fe_1,\dots $
these edges, seen this time geometrically, in order from right to left. 
Precisely, $\fe_k=\fe_k(N)$, $0\le k\le N$, is defined by
\begin{equation*}
   \begin{split}
   \fe_k :=\big\{d_{N-k}^{(j)} : 0\le j\le N, \, j+k = N\big\}.
   \end{split}   
\end{equation*}
Then, just like for the western edges, let us denote by $R_\fe(k)$ the proportion of zeros 
on $\fe_k$:
\begin{equation}\label{eqDefRe}
    R_\fe(k) := \sdfrac{1}{N-k} 
    \#\Big\{ j : d_{N-k}^{(j)} = 1, \ 0\le j<N,\, j+k=N 
    \Big\}, \ \ \text{ for $0\le k\le N$}.
\end{equation}

The following theorem shows that almost 
all~\eqref{eqPGTriangle} triangles generated by sequences
of~$0$'s and $1$'s have nearly equal proportions of 
$0$'s and $1$'s on the rays $\fw_0,\fw_1,\dots$ and 
$\fe_0,\fe_1,\dots$
The more precise result is as follows.
%
\begin{theorem}\label{TheoremAlmostAllSeq}
For any $\varepsilon\in(0,1/2)$, there exits $\delta=\delta_{\varepsilon}>0$ and
an integer $N_{\varepsilon, \delta}$ such that, 
for any integer 
$N \geq N_{\varepsilon,\delta}$, there exists an exceptional subset
$\cE(N)\subset \cL_0(N)$ (possibly empty) 
having at most $\varepsilon\cdot \#\cL_0(N)$ elements,
such that, for any sequence $\fu(N)\in\cL_0(N)\setminus\cE(N)$, all ratios, defined by~\eqref{eqDefRw} and~\eqref{eqDefRe}, of the number of zeros on the rays
$\fw_0, \fw_1,\dots, \fw_{\lfloor  \delta N \rfloor }$ and
$\fe_0, \fe_1,\dots, \fe_{\lfloor \delta N\rfloor }$  in the corresponding triangle~\eqref{eqPGTriangle}
satisfy
\begin{equation}\label{eqTheoremAlmostAllSeq}
   R_\fw(0),R_\fw(2),\dots, R_\fw\big({\lfloor \delta N\rfloor }\big);
   R_\fe(0),R_\fe(2),\dots, R_\fe\big({\lfloor \delta N\rfloor }\big)
   \in [1/2 - \epsilon, 1/2 + \epsilon].
\end{equation}
\end{theorem}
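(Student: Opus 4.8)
The plan is to regard $\cL_0(N)$ as the probability space $\FF_2^N$ with the uniform measure, so that a ``random'' generator $\fu=(a_0,\dots,a_{N-1})$ has independent fair bits and $\#\cL_0(N)=2^N$. We take $\cE(N)$ to be the event that at least one of the $2(\lfloor\delta N\rfloor+1)$ rays $\fw_0,\dots,\fw_{\lfloor\delta N\rfloor}$, $\fe_0,\dots,\fe_{\lfloor\delta N\rfloor}$ fails to be ``$\varepsilon$-balanced'', i.e.\ has a proportion of $1$'s lying outside $[1/2-\varepsilon,1/2+\varepsilon]$. The target is then $\PP(\cE(N))\le\varepsilon$: indeed $\#\cE(N)=\PP(\cE(N))\cdot 2^N$, and on the complement of $\cE(N)$ every one of the ratios $R_\fw(k),R_\fe(k)$ with $0\le k\le\lfloor\delta N\rfloor$---in particular all of those listed in~\eqref{eqTheoremAlmostAllSeq}---lies in $[1/2-\varepsilon,1/2+\varepsilon]$.

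The engine of the proof is the $\FF_2$-linear structure of~\eqref{eqPGTriangle} for binary $\fu$, the same structure that underlies Theorems~\ref{TheoremT} and~\ref{TheoremTN}. Every entry of the triangle stays in $\{0,1\}$, and since $|x-y|\equiv x+y\pmod 2$, iterating~\eqref{eqDifferences} together with Pascal's rule gives the exact identity
\begin{equation*}
   d_k^{(j)}\ =\ \Big(\,\sum_{i=0}^{j}\binom{j}{i}\,a_{k+i}\Big)\bmod 2,\qquad\text{for all }j,k\ge 0 .
\end{equation*}
Thus each entry is an $\FF_2$-linear functional of $(a_0,\dots,a_{N-1})$ in which $a_k$ and $a_{k+j}$ carry the coefficients $\binom j0=\binom jj=1$. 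I claim that along one western ray $\fw_k$ (respectively one eastern ray $\fe_k$) these functionals are $\FF_2$-\emph{linearly independent}: writing the entries of $\fw_k$ in order as $d_k^{(0)},d_k^{(1)},\dots,d_k^{(N-1-k)}$, each $d_k^{(j)}$ is the first of them to involve the variable $a_{k+j}$, so it cannot lie in the linear span of its predecessors; for $\fe_k$ the identical argument applies with $a_{k+j}$ replaced by the \emph{lowest}-index variable occurring in each entry. Consequently, for a random $\fu$, the vector of entries along such a ray of length $L=N-k$ is uniformly distributed over $\FF_2^{L}$, so the number $S$ of $1$'s on the ray is \emph{exactly} $\mathrm{Binomial}(L,\tfrac12)$-distributed.

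It then remains to apply a concentration inequality ray by ray and take a union bound. Hoeffding's inequality gives $\PP\big(\,|S/L-1/2|>\varepsilon\,\big)\le 2e^{-2\varepsilon^2 L}$, and since $k\le\lfloor\delta N\rfloor$ forces $L\ge(1-\delta)N$, this is at most $2e^{-2\varepsilon^2(1-\delta)N}$ for every one of the $\le 2\delta N+2$ rays under consideration. Hence $\PP(\cE(N))\le(2\delta N+2)\cdot 2e^{-2\varepsilon^2(1-\delta)N}$, and for \emph{any} fixed $\delta\in(0,1)$---for concreteness take $\delta=\delta_\varepsilon:=1/2$---the right-hand side tends to $0$ as $N\to\infty$; letting $N_{\varepsilon,\delta}$ be any integer beyond which it is at most $\varepsilon$ completes the argument.

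There is no real obstacle: everything reduces to the single elementary observation that the ``leading'' variable (for $\fw_k$) or the ``trailing'' variable (for $\fe_k$) of each $d_k^{(j)}$ certifies $\FF_2$-independence along a ray, which promotes the trivial fact $\PP(d_k^{(j)}=1)=\tfrac12$ to full independence of the bits sitting on a ray. If one prefers to avoid Hoeffding, mere pairwise independence---valid because $d_k^{(j)}+d_k^{(j')}$ is again a nonzero $\FF_2$-form whenever $j\ne j'$---already yields $\operatorname{Var}(S)=L/4$, whereupon Chebyshev plus the union bound forces $\delta$ to be taken small in terms of $\varepsilon$ (any $\delta\le\varepsilon^{3}$ suffices) at no other cost. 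The only genuine care is bookkeeping: fixing the precise index ranges of $\fw_k$ and $\fe_k$ inside a side-$N$ triangle so that ``length $=N-k$'' and the leading/trailing-variable claims hold literally for every pair that actually occurs, and observing that $R_\fw,R_\fe$ as defined by~\eqref{eqDefRw} and~\eqref{eqDefRe} record proportions of $1$'s whereas the theorem is phrased in terms of $0$'s---harmless, since for binary entries these proportions sum to $1$ and the target interval is symmetric about $1/2$.
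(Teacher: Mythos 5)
Your proof is correct, and it reaches the conclusion by a route that is related to but cleaner than the paper's. The paper first shows by a direct binomial-coefficient count (via Stirling/Robbins) that all but an $\varepsilon$-fraction of sequences in $\cL_0(L)$ are $\varepsilon$-balanced, and then transfers this statement to each ray $\fw_k$ by invoking the involution property of $T$ from Theorems~\ref{TheoremT} and~\ref{TheoremTN}: the map $\fu_k\mapsto\fw_k$ is a bijection of $\cL_0(N-k)$, so the count of bad rays equals the count of bad generators; a union bound over $k\le K=\delta_\varepsilon N$ then forces $\delta$ to be small in terms of $\varepsilon$. You instead work directly with the $\FF_2$-linear formula $d_k^{(j)}=\sum_i\binom{j}{i}a_{k+i}\bmod 2$ (this is exactly~\eqref{eqbn} shifted by $k$) and observe that the triangular structure --- each $d_k^{(j)}$ is the first functional on its ray to involve $a_{k+j}$, resp.\ the lowest-index variable for $\fe_k$ --- makes the $N-k$ functionals on a ray linearly independent, so the ray is exactly uniform on $\FF_2^{N-k}$ and its number of ones is Binomial$(N-k,1/2)$. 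This is the same underlying fact as the bijectivity of $T_N$ (a triangular matrix with unit diagonal is invertible), but your derivation is self-contained and, combined with Hoeffding in place of Stirling, yields the quantitatively stronger conclusion that any fixed $\delta\in(0,1)$ works, whereas the paper's bookkeeping requires $\delta\lesssim\varepsilon^2$. Your closing remarks also correctly dispose of the two genuine wrinkles: the mismatch between ``zeros'' in the theorem text and the count of $1$'s in~\eqref{eqDefRw}--\eqref{eqDefRe} (immaterial by symmetry of the target interval about $1/2$), and the index bookkeeping for $\fe_k$, whose definition in the paper is itself garbled but whose intended meaning (the ray $d_{N-1-k-j}^{(j)}$, $0\le j\le N-1-k$) supports your trailing-variable argument verbatim.
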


\section{Differences with square-primes}\label{SectionSPS}
\subsection{Preliminary notes on SP-numbers}\label{SubsectionNotesOnSPS}
Merging together squares larger than one and primes into the recently introduced sequence of 
\textit{square-primes}~\cite{BMa2022, Bha2022a, Bha2022b, BCZ2023} 
proves to be a bright combination. 
Formally, the sequence is defined as the ordered union:
\begin{equation*}
   \begin{split}
   S\cP := &\bigsqcup_{k=2}^\infty \{k^2 p \mid p \text{ prime}\}\\
   = &\big \{
    8, 12, 18, 20, 27, 28, 32, 44, 45, 48, 50, 52, 63, 68, 72, 75, 76, 80, 92, 98, 99,\dots
   \big\}.
   \end{split}   
\end{equation*}
Note that there are no primes nor squares in the set $S\cP$.
Due to the uniform growth of the gaps between squares, 
this new sequence, also called \textit{SP-numbers}, 
has a type of distribution that reverberates from a distance that of the prime numbers
(for the higher order differences of primes and square-primes, in Figure~\ref{FigLeftEdgePSPs} 
there are side-by-side two triangles generated by them for comparison).
However, the change in the arithmetic nature from primes to composite numbers, 
makes proving some remarkable properties that the prime number sequence has 
transferred to SP-numbers not as difficult 
if we employ what we already know about prime numbers.

Let $s_n$, $n\ge 1$, denote the $n$th square-prime. A few such numbers are $s_1=8$, $s_{21}=99$,
$s_{76}=404$ and $s_{1000}=7900$.
An asymptotic estimate (see~\cite[Theorem 4.1]{Bha2022a}) 
shows that $s_n$ and $p_n$ have a similar order of magnitude:
\begin{equation*}
   \begin{split}
   s_n = \big(\zeta(2) - 1\big)\cdot \sdfrac{n}{\log n} + O\left(\sdfrac{n}{\log n^2}\right),
   \end{split}   
\end{equation*}
and $s_n<p_n$ for large $n$ because $\zeta(2) - 1\approx 0.64493 < 1$.
An analogous result to Dirichlet's Theorem~\cite[Theorem 6.1]{Bha2022a} 
on primes in arithmetic progressions holds
with a different constant depending on the progression for square-primes, also.

In the increasing sequence of square-primes, \textit{twins} are pairs of neighbor numbers that are~$1$ distance apart (such as $27$ and $28$ or $44$ and $45$), and we know that there are 
infinitely many such pairs~\mbox{\cite[Theorem 4.3]{Bha2022a}.}
Closely related to this is the following lemma that we need in the proof of Theorem~\ref{TheoremSPGP}.
The proof of the lemma appears in~\cite{BMa2022} and for the sake of completeness, we include here, also.

\begin{lemma}[\!\!{\cite[Theorem 2.1]{BMa2022}}]\label{LemmaSP2}
For any positive integer $x$, there exist two square-prime numbers $a,b$ such that $x = a-b$. 
\end{lemma}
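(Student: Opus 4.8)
The statement to prove is Lemma~\ref{LemmaSP2}: for any positive integer $x$, there exist square-primes $a,b$ with $x = a - b$. The plan is to exhibit $a$ and $b$ explicitly as multiples of a common square, reducing the problem to a statement about primes in arithmetic progressions rather than about square-primes directly. Recall a square-prime has the form $k^2 p$ with $k \ge 2$ and $p$ prime. So I would look for a fixed $k \ge 2$ and two primes $p > q$ with $k^2(p - q) = x$, i.e. I want $k^2 \mid x$ and then primes $p,q$ with $p - q = x/k^2$. The cleanest choice is $k = 2$ if $x$ is not divisible by a larger square we can exploit, but in general we cannot assume $4 \mid x$, so instead I would absorb the ``non-square part'' of $x$ differently: write the desired difference as $a - b = k^2 p - k^2 q$ only when $k^2 \mid x$; otherwise we need $a = k^2 p$, $b = \ell^2 q$ with different squares. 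The genuinely uniform trick is to take $a$ and $b$ to share the same multiplier: pick an even number $m = 2x$ (or some convenient multiple of $x$), and look for primes of the form $p$ and $p + $ (something). Concretely, I would set things up so that $b = k^2 q$ and $a = k^2 q + x$ with $a$ also forced to be $k^2 p$; this needs $x$ to be a multiple of $k^2$, so the right move is: let $k$ be such that $k^2$ divides some chosen multiple, and instead choose $a = (x) \cdot (\text{prime})$-type expressions.

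Let me restructure more carefully, as I expect the paper does. Fix the target $x$. Consider the arithmetic progression $\{x m^2 : m \ge 1\}$ or rather consider, for a large prime $p$, the two candidates $a = (p+1)^2 \cdot r$ and $b = p^2 \cdot r$ — no. The robust approach: take any prime $p$ with $p \nmid x$ and $p$ large; set $b = 4p^2 q$ for a suitable prime $q$... this is getting complicated. The approach I would actually commit to is the following. By Dirichlet's theorem on primes in arithmetic progressions, the progression $1 + 4x\mathbb{Z}$ — more precisely, I want two primes $p$ and $p'$ with $p' - p = x$ when $x$ is even; when $x$ is odd this is impossible for large primes, so I multiply: I want $a - b = x$ with $a = c^2 p$, $b = c^2 p'$ where $c^2(p - p') = x$. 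Choosing $c$ so that $c^2 \mid x$ is not always possible, so instead choose the common factor to be $c^2$ where $c^2$ need not divide $x$ — impossible. Hence the only way $a - b = x$ with both of the form $(\ge 2)^2 \cdot \text{prime}$ and \emph{equal} squares is $c^2 \mid x$. So the squares must differ. Take $a = 4p$ and $b = q$ with $q = 4p - x$ needing $q$ of square-prime form: write $4p - x = k^2 r$. Over varying $p$ this is flexible. So: I would choose $p$ running over primes with $4p \equiv x \pmod{\text{something}}$ so that $4p - x = 4 q'$...

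The clean statement, which I believe is what the cited proof does: given $x$, pick a prime $p$ large enough that $p > x$ and $p \equiv -x \pmod{?}$. Set $a = 4(p + x)$-type... Actually here is the transparent construction. If $x$ is even, say $x = 2y$: we want $a = 4p$, $b = 4q$ with $p - q = y$; but that requires two primes differing by $y$, not known. Drop that. Instead: $a = 4p$, $b = 4p - x$; we need $4p - x$ to be a square-prime. Choose $p$ so that $4p - x = 9r$ for a prime $r$, i.e. $4p \equiv x \pmod 9$ and $4p - x > 0$; then $p$ lies in a fixed residue class mod $9$ (assuming $\gcd(4, 9) = 1$, which holds, so for each residue of $x$ mod $9$ there is a residue class for $p$), and $r = (4p - x)/9$ must be prime — this is again not automatic, since $r$ is a \emph{linear} function of $p$ and Dirichlet gives primes $p$, not simultaneous primality of $p$ and $(4p - x)/9$. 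So the real content must come from a \textbf{genuine} use of known results: either Dirichlet plus a congruence forcing $b$ to automatically be a square-prime via a fixed square factor, or direct appeal to the infinitude of square-prime twins. I would therefore use the latter-style idea: the set of square-primes has positive relative density and is ``Dirichlet-rich'', so for the fixed $x$ choose any prime $q$ with $q \equiv $ appropriate class and $q$ large so that both $4q$ and $4q + x$ are... no, $4q + x$ need not be $4 \times$ prime unless $4 \mid x$.

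\textbf{The construction I commit to.} Given $x \ge 1$, let $a = 4(x + q)$ and $b = 4q$ where $q$ is a prime chosen so that $x + q$ is also prime. Then $a = 4(x+q)$ is a square-prime (as $4 = 2^2$ and $x + q$ prime), $b = 4q$ is a square-prime, and $a - b = 4x$. That proves the lemma for $4x$ in place of $x$, not for $x$. To fix the factor $4$: instead take $a = (x + q)$ and... we need $a$'s square part. The resolution: prove it for \emph{every} value $x$ by choosing the \emph{square multiplier adaptively}. Write $a = c^2 p$ and $b = c^2 p'$ is out. So: take $a = 4 \cdot p$ and $b = p'$ with $p' = 4p - x$; force $p'$ to be a square-prime by arranging $p' = 4m$ with $m$ prime, i.e. $x \equiv 0 \pmod 4$ needed — only handles $x$ divisible by $4$. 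Clearly a single template cannot cover all $x$, so one \textbf{splits into residue classes of $x \bmod 4$} and in each uses an appropriate pair of templates (e.g. $x$ odd: $a = 9p$, $b = 9p - x$, want $9p - x$ to be $4 \times$ prime or $25 \times$ prime, pick the square to match parity). In every case the key analytic input is Dirichlet's theorem on primes in arithmetic progressions (and possibly the strong form on primes in intervals / the fact that there is a prime in each suitable class below a bound), applied so that one free prime determines everything and the second square-prime condition is satisfied by a \emph{congruence}, not by independent primality.

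\textbf{Main obstacle.} The crux — and the reason a naive approach fails — is avoiding the need for \emph{simultaneous} primality of two quantities that are both linear in a single parameter (which would be a twin-prime-type statement, open in general). I expect the proof sidesteps this by making the second square-prime of the form (fixed square)$\times$(the same prime that is already chosen), so that only one primality condition is ever imposed; the arithmetic bookkeeping — choosing which square multipliers $c^2, c'^2$ to use according to $x \bmod (\text{small modulus})$ so that $c^2 p - c'^2 p = x$ has a solution with $p$ prime — is then routine but must be done case by case, and I would organize it as: reduce to finding one prime $p$ with $p \equiv a_0 \pmod{M}$ for an explicit modulus $M$ depending only on $x$, invoke Dirichlet, and check both endpoints are square-primes by construction.
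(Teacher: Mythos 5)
Your proposal never converges to a workable argument, and the case it leaves open is exactly the hard one. You correctly identify the crux (one must avoid demanding simultaneous primality of two quantities tied to a single free parameter), and your final idea --- make both square-primes share the same prime factor, $a=c^2p$, $b=c'^2p$, so that only arithmetic bookkeeping remains --- is in fact how the paper handles composite square-free $x$: write $x=py$ with $y$ odd, use the consecutive-square identity $y=(K+1)^2-K^2$ to get $x=p(K+1)^2-pK^2$ (with $p=2$ when $x$ is even, plus the ad hoc fix $6=2\cdot 3^2-3\cdot 2^2$), and then pull out any square factor of $x$ to reduce the non-square-free case to this one. But the same-prime template forces $p\mid x$ and $c^2-c'^2=x/p$, so there is no residue class to choose and nothing for Dirichlet to do; your closing plan of ``find $p\equiv a_0 \pmod M$ by Dirichlet and check both endpoints are square-primes by construction'' is internally inconsistent, and indeed the paper's proof uses no Dirichlet-type input at all.

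The genuine gap is the case $x$ prime (and $x=1$). When $x$ is prime the same-prime template is impossible ($c^2-c'^2=1$ has no admissible solution), and any template with two distinct prime factors, such as your $a=9p$, $b=4q$ attempts, reintroduces the twin-prime-type obstruction you were trying to avoid. The paper's key idea here, which your proposal never finds, is a Pell equation: choose an auxiliary prime $p\neq x$, take a nontrivial solution of $m^2-pxn^2=1$, and multiply by $x$ to get $xM^2-p(xN)^2=x$; both $a=xM^2$ and $b=p(xN)^2$ are square-primes with no new primality condition imposed, since $x$ and $p$ are already primes. The case $x=1$ is settled by an explicit pair of adjacent square-primes such as $28-27=1$ (backed by the known infinitude of such twins). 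Without these two ingredients your sketch cannot be completed, so as it stands the proof has a real hole rather than a fixable bookkeeping issue.
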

\begin{proof}
We partition the set of all positive integers into the following five subsets:
\begin{enumerate}
\item[(\texttt{i})] $S_1=\{1\}$.
\item[(\texttt{ii})] $S_2=\cP$, the set of all primes.
\item[(\texttt{iii})] $S_3=\{x : x \not\in\cP,\, 2\nmid x\}$, the set of all odd composite numbers.
\item[(\texttt{iv})] $S_4=\{x : x = 2p_1\cdots p_k,\, k\ge 1, \text{ for some distinct odd primes}\}$, 
the set of all even composite square-free numbers.
\item[(\texttt{v})] $S_5=\{x : x = p^2d, p\in\cP, d\ge 1\}$, 
the set of non-square-free numbers.
\end{enumerate}

\smallskip 
\noindent
Note that $\NN\setminus\{0\}=S_1\cup S_2\cup S_3\cup S_4\cup S_5$, hence it suffices to prove 
the existence of a pair of $SP$-numbers with difference $x$, separately, for $x\in S_j$, $1\le j\le 5$.

\medskip
\noindent (\texttt{i}) 
We have only one candidate, $x=1$, in this case.
There exist infinitely many neighbor SP-numbers~\cite[Theorem 4.3]{Bha2022a}, 
but it is enough to consider the first example $(a,b)=(28,27)$, for which $a-b=1$, 
which proves the case.

\medskip
\noindent (\texttt{ii})
%
Let $x\in\cP$ be fixed, and let also $p\in \cP$ be a different prime number.
Consider the following Pell equation in variables $m,n\in\ZZ$:
\begin{equation}\label{eqPell}
   \begin{split}
  m^2 - pxn^2 = 1.  
   \end{split}	
\end{equation}
Since $px$ is not a square, we know after Pell that~\eqref{eqPell} has at least a solution with
$m,n>1$. Let $(M,N)$ be such a solution. Then,
$M^2 - pxN^2 = 1$, and multiplying this equality by $x$, we find that
\begin{equation}\label{eqrefSPcase2}
   xM^2 -p(xN)^2 = x. 
\end{equation}
Observing that $a:=xM^2$ and $b:=p(xN)^2$ are both SP-numbers, the equality~\eqref{eqrefSPcase2},
which becomes $a-b=x$, proves the lemma in this case.

\medskip
\noindent (\texttt{iii})
Suppose now that $x$ is an odd composite square-free number.
Then, $x = py$, where $p\ge 3$ is prime and $y\ge 5$ is prime or a product of distinct primes $\ge 5$.
Thus $y$ is also necessarily odd. Let $y=2K+1$ for some integer $K\ge 2$.
It then follows that $y$ is a difference of two squares:
$y=(K+1)^2-K^2$, which implies
\begin{equation}\label{eqxisaminusb1}
   x = p\big((K+1)^2-K^2\big) = p(K+1)^2-pK^2.
\end{equation}
Let $a:= p(K+1)^2$ and $b:=pK^2$.
Since $K>1$, both $a$ and $b$ are square-prime numbers and~\eqref{eqxisaminusb1} shows that
$x=a-b$, which concludes the proof in this case as well.

\medskip
\noindent (\texttt{iv})
Now let us assume that $x$ is an even composite square-free number.
Then  $x = 2y$, where $y\ge 3$ is prime or a product of distinct primes $\ge 3$.
Reasoning as in the previous case, we find that $y=2K+1$, and the analogue of~\eqref{eqxisaminusb1} is 
\begin{equation}\label{eqxisaminusb2}
   x = 2\big((K+1)^2-K^2\big) = 2(K+1)^2-2K^2,
\end{equation}
where $K$ is a positive integer that can also be equal to $1$ this time.

Let $a:= 2(K+1)^2$ and $b:=2K^2$. Note that if $K>1$, then both $a$  and $b$ are SP-numbers
and~\eqref{eqxisaminusb2} shows that $x=a-b$.  

In the remaining possibility when $K=1$, we have $y=3$, so $x=6$, 
which can also be written as a difference of square-primes:
$6=2\cdot 3^2-3\cdot 2^2$,
concluding the argument in case~(\texttt{iv}).

\medskip
\noindent (\texttt{v})
Let us assume now that $x>1$ is not square-free, that is,  
$x=c^2y$ for some integers $c>1$,~$y\ge 1$, and $y$ is square-free.
Then, from the proved cases (\texttt{i})-(\texttt{iv}), we know that there exist two square-prime
numbers $a'$ and $b'$ such that $y=a'-b'$.
Let us say that $a' = p's^2$ and $b' = p''t^2$, where $p', p''$ are prime numbers 
and $s, t > 1$ are integers.
These yield:
\begin{equation*}
   x= c^2y = c^2(a'-b') = p's^2c^2-p''t^2c^2.   
\end{equation*}
Let $a := p'(sc)^2$ and $b := p''(tc)^2$. Since $p', p''$ are primes and $sc,tc>1$, both $a$ and $b$
are SP-numbers, and the above shows that $x=a-b$. 
This concludes the proof for case (\texttt{v}) and also the entire proof of the lemma.
\end{proof}
On combining Lemma~\ref{LemmaSP2} with the fact that any distance between
two square-primes is replicated infinitely often as the difference between other square-primes~\cite[Theorem 4.3]{Bha2022a}, we find that all positive integers appear 
infinitely often as differences between square-primes.
\begin{lemma}\label{LemmaInfiniteGaps}
   Any positive integer appears infinitely often as a difference between square-primes.
\end{lemma}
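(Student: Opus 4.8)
The plan is to combine the two ingredients explicitly cited just before the statement. First, I would invoke Lemma~\ref{LemmaSP2}: given an arbitrary positive integer $x$, there exist square-primes $a, b$ with $x = a - b$. This produces \emph{one} representation of $x$ as a difference of square-primes; the remaining task is to upgrade ``one'' to ``infinitely many''. For this I would appeal to \cite[Theorem 4.3]{Bha2022a}, which (as quoted in the text) guarantees that any distance realized between two square-primes is realized infinitely often as a difference of square-primes. Applying that result to the specific gap $x = a - b$ immediately yields infinitely many pairs $(a', b')$ of square-primes with $a' - b' = x$, which is exactly the assertion of the lemma.

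Concretely, the write-up would read: fix $x \ge 1$. By Lemma~\ref{LemmaSP2} there is at least one pair of square-primes whose difference is $x$, so $x$ occurs as a gap between square-primes. By \cite[Theorem 4.3]{Bha2022a} every gap that occurs between square-primes occurs infinitely often; hence $x$ occurs infinitely often as a difference between square-primes. Since $x$ was arbitrary, this holds for every positive integer.

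There is essentially no obstacle here: the proof is a two-line synthesis of results already established in the paper and in \cite{Bha2022a}. The only point requiring a modicum of care is to state precisely what \cite[Theorem 4.3]{Bha2022a} asserts and to check that ``difference between square-primes'' in the present lemma means what that theorem controls (an ordered difference $s_m - s_n$ with $s_m > s_n$, both square-primes, not necessarily consecutive). Given the phrasing already used in the surrounding text — ``any distance between two square-primes is replicated infinitely often as the difference between other square-primes'' — this match is immediate, and no further estimates, constructions, or case analyses are needed.
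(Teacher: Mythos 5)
Your proposal is correct and matches the paper's own argument exactly: the paper derives Lemma~\ref{LemmaInfiniteGaps} by combining Lemma~\ref{LemmaSP2} (one representation of each $x$ as a difference of square-primes) with \cite[Theorem 4.3]{Bha2022a} (every realized gap recurs infinitely often). Your added caution about verifying the precise scope of \cite[Theorem 4.3]{Bha2022a} is sensible but does not change the argument.
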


\subsection{Proof of Theorem~\ref{TheoremSPGP}}\label{SubSectionProofTheoremSPS}
We begin by proving a related result which shows that a~\eqref{eqPGTriangle} triangle 
with `controlled size' elements on the eastern edge can be enlarged by padding it in such a way 
that the new southern vertex has a preset number $Z$.
%
\begin{proposition}\label{PropositionBorderZ}
Consider a~\eqref{eqPGTriangle} triangle with integers 
$0\le B_1\le B_2\le\cdots\le B_m$ on 
the eastern edge $\fe_1$. 
Then, there exists an integer $C_1\ge B_1$, such that 
the triangle bordered by a new eastern edge $\fe_0$ obtained by calculating the 
differences generated by the addition of $C_1$ at the end of the generating row 
has components $C_1, C_2,\dots, C_m,C_{m+1}$ with $C_j\ge B_j$ for $1\le j\le m$.
Moreover, given an integer $Z\ge 0$,
we can choose $C_1$ such that $C_{m+1} = Z$.
\end{proposition}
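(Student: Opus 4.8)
The plan is to follow the eastern edge step by step as a single new entry $C_1$ is glued to the end of the generating row, and then to pick $C_1$ large enough that every absolute value occurring along the new edge collapses to an ordinary subtraction. Index the given edge $\fe_1$ as $(B_1,\dots,B_m)$ running from the generating row down to the southern vertex, so that $B_j$ is the rightmost entry of row $j-1$ of the original triangle (thus $B_1=a_{m-1}$ is the last generating entry and $B_m$ is the old southern vertex). A one-line induction on the row index shows that appending $C_1$ at the end of the generating row leaves every old entry of the triangle untouched and merely inserts one new rightmost entry in each subsequent row: if $C_{j}$ denotes that new entry in row $j-1$ (with $C_1$ the appended value), the difference rule~\eqref{eqDifferences} forces
\begin{equation*}
   C_{j+1}=\big|C_j-B_j\big|,\qquad 1\le j\le m,
\end{equation*}
and $(C_1,C_2,\dots,C_{m+1})$ is exactly the new edge $\fe_0$, with $C_{m+1}$ the new southern vertex.

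With this recursion in hand, I would make the explicit choice
\begin{equation*}
   C_1:=Z+\sum_{j=1}^{m}B_j .
\end{equation*}
A second induction on $j$ then gives $C_j=Z+\sum_{i=j}^{m}B_i$ for every $1\le j\le m+1$: the base case is the definition of $C_1$, and in the inductive step $C_j-B_j=Z+\sum_{i=j+1}^{m}B_i\ge 0$, so the absolute value disappears and $C_{j+1}=C_j-B_j=Z+\sum_{i=j+1}^{m}B_i$. Only the inequalities $Z\ge 0$ and $B_i\ge 0$ enter here; the hypothesis $B_1\le\cdots\le B_m$ is available but not actually needed. Reading off the two ends yields $C_{m+1}=Z$, which is the ``moreover'' clause, while for $1\le j\le m$ one has $C_j=Z+B_j+\sum_{i=j+1}^{m}B_i\ge B_j$, and in particular $C_1\ge B_1$. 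The first (weaker) assertion of the proposition is the special case $Z=0$, i.e.\ $C_1=\sum_{j=1}^m B_j$.

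There is no serious obstacle in this argument: it is essentially bookkeeping. The one place that genuinely requires care is the first induction — one must verify that padding the generating row on the right perturbs only the rightmost diagonal of the triangle, and that the new entries interlock with the old eastern edge precisely as $C_{j+1}=|C_j-B_j|$. Once that recursion is correctly set up, the single choice $C_1=Z+\sum_{j=1}^m B_j$ simultaneously secures $C_j\ge B_j$ for $1\le j\le m$ and $C_{m+1}=Z$, completing the proof.
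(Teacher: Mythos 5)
Your proposal is correct and takes essentially the same approach as the paper: the paper builds the new edge backwards from the preset vertex via $C_m=Z+B_m$, $C_{j}=C_{j+1}+B_j$, which unrolls to exactly your closed form $C_j=Z+\sum_{i=j}^{m}B_i$, so your explicit choice of $C_1$ and forward verification that the absolute values collapse is just the same construction presented in the opposite direction. Your remark that only $Z\ge 0$ and $B_i\ge 0$ are needed (not the monotonicity of the $B_i$) is likewise consistent with the paper's argument.
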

\begin{proof}
With some arbitrary integers $A_1,\dots, A_{m-1}$, 
the triangle in the statement of the proposition is as follows:
\begin{equation*}
\setlength{\extrarowheight}{-0.25pt}
\setlength{\extrarowheight}{1mm}
   \begin{tabular}{ccccccccccccccccccccc}
 & $A_1$ && $A_2$ && $\dots$ && $A_{m-1}$ && $B_1$ && $C_1$ &\\[2mm]
& &  $\dots$   & &  $\dots$ &&  $\dots$ &&  $B_2$ &&  $C_2$\\[2mm]
  &   && $\dots$  && $\dots$ && $B_3$ && $C_3$ &\\[2mm]  
   & &   &&  $\dots$ &&  $\dots$ && $\dots$   &  \\[2mm]
     &   &&  && $B_m$  && $C_m$ &  & \\[2mm]
     &   &&  && & $Z$  && && &  & \\[2mm]
  \end{tabular}
\end{equation*}
%
We proceed backwards, from bottom to top. Let us assume that $Z\ge 0$ is given and it takes
the position of $C_{m+1}$.
Then, according to the definition, we may take $C_{m}$ such that $C_m-B_m=Z=C_{m+1}$, that is,
$C_m=C_{m+1}+B_m\ge B_m$.

Next, take $C_{m-1}$ such that $C_{m-1}-B_{m-1}=C_m$, so that $C_{m-1}=C_m+B_{m-1}\ge B_{m-1}$.

Likewise, inductively, it follows that we may take $C_{1}$ such that $C_1-B_1=C_2$, so that
$C_1=C_2+B_1\ge B_1$.

In conclusion, we obtained $C_1$ and the sequence $C_1,\dots,C_{m+1}$, 
which satisfies the inequalities $C_1\ge B_1, \dots, C_m\ge B_m$, and additionally $C_{m+1}=Z$,
thus proving the lemma.
\end{proof}

\begin{remark}\label{RemarkL1}
Let us note that the proof of Proposition~\ref{PropositionBorderZ} 
also allows the assumption of a different preset 
order between $C_j$ and $B_j$ for $1\le j\le m$.
Indeed, starting in the same way from $Z$ and recursively calculating in reverse order the elements 
$C_j$ from the new equalities given by the preset order, we obtain $C_1$, 
the new element of the first row, which ensures that the southern vertex 
of the~\eqref{eqPGTriangle} triangle is $Z$.
\end{remark}

Numerical experiments show that square-prime numbers are very handy 
for generating~\eqref{eqPGTriangle} triangles that have various properties. 
For example, one that has alternately $1$ on the western edge is:
{\small
\begin{equation}\label{eqAlternate1s}
\setlength{\tabcolsep}{2.5pt}
\setlength{\extrarowheight}{0.5pt}
\begin{tabular}{cccccccccccccccccccccccccccc}
 & 27 && 28 && 44 && 76 && 98 && 112 && 153 && 171 && 180 && 188 && 292 && 316 &\\
 && 1 && 16 && 32 && 22 && 14 && 41 && 18 && 9 && 8 && 104 && 24 &&\\
 &&& 15 && 16 && 10 && 8 && 27 && 23 && 9 && 1 && 96 && 80 &&&\\
 &&&& 1 && 6 && 2 && 19 && 4 && 14 && 8 && 95 && 16 &&&&\\
 &&&&& 5 && 4 && 17 && 15 && 10 && 6 && 87 && 79 &&&&&\\
 &&&&&& 1 && 13 && 2 && 5 && 4 && 81 && 8 &&&&&&\\
 &&&&&&& 12 && 11 && 3 && 1 && 77 && 73 &&&&&&&\\
 &&&&&&&& 1 && 8 && 2 && 76 && 4 &&&&&&&&\\
 &&&&&&&&& 7 && 6 && 74 && 72 &&&&&&&&&\\
 &&&&&&&&&& 1 && 68 && 2 &&&&&&&&&&\\
 &&&&&&&&&&& 67 && 66 &&&&&&&&&&&\\
 &&&&&&&&&&&& 1 &&&&&&&&&&&&
  \end{tabular}
\end{equation}
}

Turning now to the proof of Theorem~\ref{TheoremSPGP}, 
let us suppose that $m\ge 2$ is even and the~\eqref{eqPGTriangle} triangle generated 
by $\fu=(A_1< A_2<\cdots<A_m)$
is given such that it satisfies the hypothesis of the theorem. 
For the case where there are only square-prime numbers on the first row 
and the elements on the western edge have $1$  as every other element,  
there are many small triangles satisfying these requirements 
(see the numerical triangle~\eqref{eqAlternate1s} that has inserted into it a few such examples).
Our objective is to border the triangle of size $m$ with two additional edges to the east in such a way that 
the larger triangle, with a side length of~$m+2$, also satisfies the requirements 
of Theorem~\ref{TheoremSPGP}. 
Then, by induction, we will conclude that the result holds for any triangle of even size~$m \ge 2$.

Denote by $\fe''=(A_m,D_1,\dots, D_{m-1})$ the eastern edge of the given triangle of size $m$.
Let~$X$ and $Y$, be the two new numbers that will continue $\fu$, and let us denote 
the bordering edges they generate by $\fe'=(X,E_1,\dots,E_m-1,E_m)$ and 
$\fe=(Y,F_1,\dots,F_{m-1},F_m,F_{m+1})$, respectively.
Let $Z$ be the integer on the southern vertex, that is, in the previous notation, $F_{m+1}=Z$, 
and in the hypothesis of the theorem~$Z=1$. All these notations can be seen at a glance in the 
following display:
%
\begin{equation*}
\setlength{\tabcolsep}{2.5pt}
\setlength{\extrarowheight}{-0.25pt}
   \begin{tabular}{ccccccccccccccccccccc}
 & $A_1$ && $A_2$ && $\dots$ && $A_m$ && $X$ && $Y$ &\\[2mm]
& &  $A_2-A_1$   & &  $\dots$ &&  $D_1$ &&  $E_1$ &&  $F_1$\\[2mm]
  &   && $\dots$  && $D_2$ && $E_2$ && $F_2$ &\\[2mm]  
  &   && \multicolumn{6}{c}{\ \ \ \dotfill\ \ } &\\[2mm] 
   & &   &&  $D_{m-1}$ &&  $E_{m-1}$ && $F_{m-1}$   &  \\[2mm]
     &   &&  && $E_{m}$  && $F_{m}$ &  & \\[2mm]
     &   &&  && & $F_{m+1}=Z$  && && &  & \\[2mm]
  \end{tabular}
\end{equation*}
%
Let $\Delta$ denote the difference $\Delta= Y-X$.
Proceeding backwards as in the proof of Proposition~\ref{PropositionBorderZ}, we find out the conditions that
$X$ and $\Delta$ must meet. This time, we start with the necessary conditions and verify that 
they actually fulfill their role.
The two conditions are:
\begin{equation}\label{Condition1}
   X \ge A_m+Z+D_1+D_2+\cdots+D_{m-1}\phantom{Y-Z}
\end{equation}
and 
\begin{equation}\label{Condition2}
   \Delta  = Y-X = Z+\lOddB D_1+D_3+\cdots+D_{m-1}\rOddB,
\end{equation}
%
where, the double square brackets indicate that only the $D_j$'s with odd indices are added.

Let us first note that both conditions~\eqref{Condition1} and~\eqref{Condition2} 
depend only on the eastern edge $\fe''$ 
of the initial triangle that we border and the integer $Z$ that 
we want as the southern vertex of the bordered triangle.
Then we know from Lemma~\ref{LemmaInfiniteGaps} that there are pairs of square-primes, 
no matter how big and how many, that fulfill them.

Taking into account condition~\eqref{Condition1}, the numbers on the first added border layer $\fe'$ are:
\begin{equation*}
   \begin{split}
   E_1 &= X - A_m,\\
   E_2 &= E_1-D_1 = X - A_m -D_1,\\
   E_3 &= E_2-D_2 = X - A_m -D_1-D_2,\\
   &\cdots\\
   E_{m-1} &= E_{m-2}-D_{m-2} = X-A_m - D_1-D_2-\cdots-D_{m-2},\\
   E_{m} &= E_{m-1}-D_{m-1} = X-A_m - D_1-D_2-\cdots-D_{m-2}-D_{m-1}.
   \end{split}   
\end{equation*}
Further, employing condition~\eqref{Condition2} as well, these show that the differences on  $\fe$, 
the outer layer of the border, are
\begin{equation*}
   \begin{split}
   F_1 &= Y-X = Z+\lOddB D_1+D_3+\cdots+D_{m-1}\rOddB, \\
   F_2 &= E_1 -F_1 = X - A_m -Z - \lOddB D_1 + D_3+\cdots+D_{m-1}\rOddB,\\
   F_3 &= E_2 -F_2 = Z +  \lOddB D_3+\cdots+D_{m-1}\rOddB,\\
   F_4 &= E_3 -F_3 = X - A_m -Z - D_1 - D_2 - \lOddB D_3+\cdots+D_{m-1}\rOddB,\\
   F_5 &= E_4 -F_4 = Z +  \lOddB D_5+\cdots+D_{m-1}\rOddB,\\
   F_6 &= E_5 -F_5 = X - A_m -Z - D_1 - D_2 - D_3 -D_4 -\lOddB D_5+\cdots+D_{m-1}\rOddB,\\
   &\cdots\\
   F_{m-1} &= E_{m-2} -F_{m-2} = Z +  \lOddB D_{m-1}\rOddB,\\
   F_m &= E_{m-1} -F_{m-1} = X - A_m -Z - D_1 - D_2 - \cdots -D_{m-2} -\lOddB D_{m-1}\rOddB,\\
   F_{m+1} &= E_{m} -F_{m} = Z.
   \end{split}   
\end{equation*}

In conclusion, by extending the initial sequence $\fu$ with the two square-prime numbers $X$ and $Y$, 
we obtained a triangle whose southern vertex is $Z=1$, as desired, 
which concludes the proof of Theorem~\ref{TheoremSPGP}.

\smallskip
It is worth noting that in the above proof, the actual value of the number $Z$ 
did not play any special role, as $Z$ could have taken any value $Z\ge 0$. 
Thus, the following more general result holds. 
Given a sequence of non-negative integers $\{w_j\}_{j\ge 1}$, there exists a~\eqref{eqPGTriangle}
triangle generated by an increasing sequence of square-primes whose western edge is
$(*,w_1,*,w_2,*,w_3,*,\dots)$, where the stars are some unspecified non-negative integers.

\begin{theorem}\label{TheoremSPGP2}
Let $\bw=\{w_j\}_{j\ge 1}$ be a sequence of non-negative integers.
Then, there exists an increasing sequence of square-primes such that the~\eqref{eqPGTriangle} triangle 
they generate has on the western edge a sequence whose even-indexed elements are the elements 
of $\bw$.

\end{theorem}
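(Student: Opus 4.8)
The plan is to deduce Theorem~\ref{TheoremSPGP2} from the proof of Theorem~\ref{TheoremSPGP} by a limiting argument: one iterates the two-column bordering step used there, the only change being that the value $Z$ prescribed for the southern vertex, which was $1$ in that proof, is now allowed to vary and is set equal to $w_k$ at the $k$-th bordering. As was observed immediately after the proof of Theorem~\ref{TheoremSPGP}, the value of $Z$ plays no role in that construction beyond being a non-negative integer, so this substitution is legitimate.

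First I would fix a base case: a strictly increasing block $A_1<\dots<A_{n_0}$ of square-primes whose~\eqref{eqPGTriangle} triangle already carries $w_1$ in the first of the prescribed even-indexed positions of its western edge. Such a seed is furnished by Lemma~\ref{LemmaSP2} together with the abundance of differences of square-primes (for instance a two-term block with prescribed difference, when $w_1\ge 1$; the degenerate value is absorbed into a slightly larger ad hoc seed). Note that the first entry of the eastern edge of this seed below its top, namely $A_{n_0}-A_{n_0-1}$, is automatically positive; this innocuous fact will be all that is needed to keep the generating sequence strictly increasing throughout.

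The inductive step is the bordering of Theorem~\ref{TheoremSPGP} applied verbatim. Suppose the current triangle has (even) side $m$, eastern edge $(A_m,D_1,\dots,D_{m-1})$, and carries $w_1,\dots,w_k$ in the prescribed positions; append two new square-primes $X<Y$. With $Z:=w_{k+1}$, conditions~\eqref{Condition1} and~\eqref{Condition2} say exactly how large $X$ must be and that $\Delta:=Y-X$ must equal $w_{k+1}+\lOddB D_1+D_3+\cdots+D_{m-1}\rOddB$. This $\Delta$ is a fixed positive integer, because the partial sum already contains the entry $D_1>0$ (for the first bordering this is $A_{n_0}-A_{n_0-1}$, and for a later bordering it is the positive difference $\Delta$ produced at the previous stage, since after a bordering $\Delta$ reappears as the odd-indexed entry $F_1$ of the new eastern edge). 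Hence Lemma~\ref{LemmaInfiniteGaps} furnishes square-prime pairs with precisely that difference and with $X$ arbitrarily large, in particular larger than $A_m$ and larger than the bound in~\eqref{Condition1}, so the enlarged generating block is again strictly increasing, and the successive values of $\Delta$ are non-decreasing. The decisive bookkeeping point is that the western-edge entries $b_0,\dots,b_{m-1}$ of a triangle of side $m$ depend only on $a_0,\dots,a_{m-1}$, hence are never disturbed by any later bordering: each prescribed value, once installed, is permanent.

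Passing to the union of the nested generating blocks then yields an infinite strictly increasing sequence of square-primes whose~\eqref{eqPGTriangle} triangle has the elements of $\bw$ exactly in the prescribed even-indexed positions of its western edge, which is the assertion of Theorem~\ref{TheoremSPGP2}. The only point I expect to require genuine care is the monotonicity bookkeeping: the southern vertices are forced to arbitrary values, including $0$, and one must make sure that $Y>X$ nonetheless holds at every stage; this is exactly what the observation $\Delta\ge D_1>0$, propagated by the telescoping inequality for $\Delta$, secures. Everything else is a faithful iteration of the already-established finite bordering step, with Lemma~\ref{LemmaInfiniteGaps} doing the number-theoretic work at each stage, and with Remark~\ref{RemarkL1} available should one wish extra flexibility in the un-prescribed ``star'' entries, which are not controlled in any case.
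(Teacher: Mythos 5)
Your proposal is correct and follows exactly the route the paper takes: the paper offers no separate proof of Theorem~\ref{TheoremSPGP2}, deriving it solely from the remark after the proof of Theorem~\ref{TheoremSPGP} that the value $Z$ placed at the southern vertex was arbitrary, so one simply iterates the two-column bordering with $Z=w_k$ at the $k$-th step and passes to the union of the nested generating blocks. Your additional verification that $\Delta=Y-X\ge D_1>0$ even when $w_{k+1}=0$ (so that Lemma~\ref{LemmaInfiniteGaps} still applies and the generating sequence of square-primes stays strictly increasing) is a point the paper leaves implicit, since with $Z=1$ it held automatically.
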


\section{\texorpdfstring{\eqref{eqPGTriangle}}{P-G} triangles in the mirror-rays}

Suppose all entries on the top row of the~\eqref{eqPGTriangle} are from $\cL_0$.
Then the entire triangle has the same property, also.

Next we show that there is a close, analytically expressible link between 
the top row~$\fu$ and the left edge $\fw$ of the triangle.
This link actually extends across the entire triangle, 
because if one cuts a few rows from the top,
then the link shall be maintained between the remaining new-first row 
and the remaining elements on the new-left edge.
On the other hand, cutting also a few vertical (geometrically rather oblique) columns on the left side, 
all parallel to edge $\fw$, then the link becomes one between the first remaining row and the first remaining leftmost edge.

Denote the components of the top sequence by $\fu=(a_0,a_1,\dots)\in\cL_0$, and
the componets of the left-edge sequence by $\fw=(b_0,b_1,\dots)\in\cL_0$.
Their corresponding formal power series are 
$f=f(X)\in\FF_2[[X]]$ and $g=g(X)\in\FF_2[[X]]$, respectively, where
\begin{equation}\label{eqfg}
   f(X) = \sum_{n\ge 0}a_n X^n\ \ \text{ and }\ \ 
   g(X) = \sum_{n\ge 0}b_n X^n .
\end{equation} 
Suppose $a_0=b_0$ is the element in the upper left corner of~\eqref{eqPGTriangle}, 
and that it is also the constant term of both series $f$ and $g$.

\subsection{Lemmas}\label{SubsectionLemmas}
The first lemma shows that 
starting somewhere on the right side of the Pascal triangle and adding
the numbers placed on the following rows diagonally to the left, 
the partial sums that we obtain are equal to the first number on the right 
of the next row.

\begin{lemma}\label{LemmaPascalR}
For any integers $K,n\ge 0$, we have
\begin{equation}\label{eqPascalR}
        \mbinom{K}{K} + \mbinom{K+1}{K}+\cdots +
        \mbinom{K+n}{K}= \mbinom{K+n+1}{K+1}.
\end{equation}
\end{lemma}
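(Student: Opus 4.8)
The plan is to prove the identity~\eqref{eqPascalR} by induction on $n$, using the classical Pascal rule $\binom{a+1}{b+1} = \binom{a}{b} + \binom{a}{b+1}$, which combinatorially corresponds to the "hockey stick" identity. First I would dispose of the base case $n=0$: the left-hand side is a single term $\binom{K}{K}=1$, and the right-hand side is $\binom{K+1}{K+1}=1$, so equality holds for every $K\ge 0$. Then, for the inductive step, I would assume that~\eqref{eqPascalR} holds for a given $n$ (and all $K$), and add the next term $\binom{K+n+1}{K}$ to both sides. On the left this produces $\binom{K+1}{K+1}$ — wait, more precisely, the new left-hand side becomes $\binom{K+n+1}{K+1} + \binom{K+n+1}{K}$ after invoking the inductive hypothesis on the first $n+1$ terms.

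The key step is then to recognize that $\binom{K+n+1}{K+1} + \binom{K+n+1}{K} = \binom{K+n+2}{K+1}$, which is exactly Pascal's rule with $a = K+n+1$ and $b+1 = K+1$ (so $b=K$). This yields $\binom{(K)+(n+1)+1}{K+1}$, which is precisely the right-hand side of~\eqref{eqPascalR} with $n$ replaced by $n+1$. Since $K$ was arbitrary throughout, the induction closes and the identity holds for all integers $K,n\ge 0$.

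Alternatively, I could give the one-line combinatorial proof: $\binom{K+n+1}{K+1}$ counts the $(K+1)$-element subsets of $\{1,2,\dots,K+n+1\}$; classifying such a subset by the position $K+1+j$ of its largest element (where $j$ ranges over $0,1,\dots,n$), the remaining $K$ elements must be chosen from $\{1,\dots,K+j\}$, contributing $\binom{K+j}{K}$ choices, and summing over $j$ gives the left-hand side. I would probably present the inductive argument as the primary proof since it is completely self-contained and matches the elementary style of the surrounding material. There is essentially no obstacle here — this is the standard hockey-stick identity — so the only thing to be careful about is bookkeeping of indices, making sure the induction is stated uniformly in $K$ so that the hypothesis can be applied at the shifted value needed in the inductive step.
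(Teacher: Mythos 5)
Your proof is correct and follows essentially the same route as the paper: induction on $n$ with the inductive step closed by Pascal's rule $\binom{K+n+1}{K+1}+\binom{K+n+1}{K}=\binom{K+n+2}{K+1}$ (the paper starts the induction at $n=1$ rather than $n=0$, an immaterial difference). The added combinatorial "largest element" argument is a fine alternative but not needed.
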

\begin{proof}
The proof follows by induction over $n$ using the recursive formula that generates Pascal's triangle.

If $n=1$, we have
$\mbinom[0.7]{K}{K}+\mbinom[0.7]{K+1}{K}=1+(K+1)=K+2=\mbinom[0.7]{K+2}{K+1}$.

Suppose~\eqref{eqPascalR} holds for some $n\ge 1$. Then, since
\begin{equation*}
   \begin{split}
      \mbinom{K+n+1}{K+1}+\mbinom{K+n+1}{K} =  \mbinom{K+n+2}{K+1}
   \end{split}   
\end{equation*}
it follows that~\eqref{eqPascalR} also holds for $n+1$, which concludes the proof of the lemma.
\end{proof}

The next lemma gives the formal power series expression and the rational representation 
of a power of the fundamental series $F(X)=1/(1+X)$, 
whose coefficients are all equal to~$1$ in $\FF_2$.
\begin{lemma}\label{LemmaPowerF}
Let $F\in\FF_2[[X]]$, $F(X)= 1+X+X^2+\cdots$.
Then, for any integer $N\ge 0$, we have
\begin{equation}\label{eqFtok}
    \big(F(X)\big)^{N+1}=\sum_{n\ge 0}
        \mbinom{N+n}{N} X^n = \mfrac[1.3]{1}{(1+X)^{N+1}}.
\end{equation}
\end{lemma}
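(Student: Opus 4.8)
The plan is to prove the two asserted identities in~\eqref{eqFtok} separately and then observe they match. First I would establish the power-series expansion $\big(F(X)\big)^{N+1}=\sum_{n\ge 0}\binom{N+n}{N}X^n$ by induction on $N$. The base case $N=0$ is just $F(X)=\sum_{n\ge 0}X^n=\sum_{n\ge 0}\binom{n}{0}X^n$, which holds by the definition of $F$. For the inductive step, assuming $\big(F(X)\big)^{N}=\sum_{n\ge 0}\binom{N-1+n}{N-1}X^n$, I would multiply by $F(X)=\sum_{m\ge 0}X^m$ and collect the coefficient of $X^n$ in the Cauchy product, which equals $\sum_{k=0}^{n}\binom{N-1+k}{N-1}$. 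This is exactly the left-hand side of~\eqref{eqPascalR} with $K=N-1$ and upper index $n$, so Lemma~\ref{LemmaPascalR} gives $\sum_{k=0}^{n}\binom{N-1+k}{N-1}=\binom{N+n}{N}$, completing the induction.

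Next I would verify the rational form $\big(F(X)\big)^{N+1}=1/(1+X)^{N+1}$. Since $F(X)=1+X+X^2+\cdots$ and $(1+X)(1+X+X^2+\cdots)=1$ in $\FF_2[[X]]$ (all middle terms cancel, working mod $2$ so that $+1$ and $-1$ coincide and there is no sign issue to track), we have $F(X)=1/(1+X)$ as an identity of formal power series, where $1/(1+X)$ denotes the multiplicative inverse of the unit $1+X$ in $\FF_2[[X]]$. Raising both sides to the power $N+1$ gives $\big(F(X)\big)^{N+1}=1/(1+X)^{N+1}$, which is the right-hand equality in~\eqref{eqFtok}. Combining this with the binomial expansion established in the first step yields the full chain of equalities in the statement.

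There is essentially no serious obstacle here: the lemma is a standard generating-function identity, and the only mild subtlety is to be careful that all manipulations take place in $\FF_2[[X]]$, so that $1+X$ is a unit and its inverse is a well-defined formal power series, and that the binomial coefficients $\binom{N+n}{N}$ appearing in the expansion are to be read as elements of $\FF_2$ (i.e. reduced mod $2$), consistent with the ambient ring. The harder input, Lemma~\ref{LemmaPascalR}, has already been proved, so the present proof is just an assembly. If anything needs care, it is merely making the Cauchy-product bookkeeping explicit enough that the appeal to Lemma~\ref{LemmaPascalR} is transparent.
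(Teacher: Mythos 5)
Your proof is correct and follows essentially the same route as the paper: induction on the exponent, computing the coefficient of $X^n$ in the Cauchy product $F(X)\cdot F^K(X)$ and invoking Lemma~\ref{LemmaPascalR} to telescope the resulting sum into $\binom{K+n}{K}$. The only difference is that you spell out the identity $(1+X)F(X)=1$ in $\FF_2[[X]]$ to justify the rational form $1/(1+X)^{N+1}$, which the paper leaves implicit; that is a harmless (and slightly tidier) addition.
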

\begin{proof}
The proof is by induction.
If $N=1$, relation~\eqref{eqFtok} holds because 
$F(X)\cdot F(X) = \sum_{n\ge 0}(n+1)X^n$,
the coefficients being given by the equality
\begin{equation*}
\sum_{r\ge 0}\sum_{\substack{s\ge 0\\ r+s=n}}1\cdot 1=n+1.    
\end{equation*}
Let $K\ge 1$ and suppose that the coefficient of $X^s$ in the power series of $\big(F(X)\big)^{K}$ is
$\mbinom[0.7]{K-1+s}{K-1}$ for all $s\ge 0$. Then, the coefficient of $X^n$ in the product
 $F(X)\cdot F^K(X)$ is
\begin{equation*}
   \begin{split}
   \sum_{r\ge 0}\sum_{\substack{s\ge 0\\ r+s=n}}1\cdot \mbinom{K-1+s}{K-1} 
   = \mbinom{K-1+0}{K-1}+\mbinom{K-1+1}{K-1}+\cdots +\mbinom{K-1+n}{K-1}
   =\mbinom{K+n}{K},
   \end{split}   
\end{equation*}
where the last equality follows from Lemma~\ref{LemmaPascalR}.
Since these are exactly the coefficients of $\big(F(X)\big)^{K+1}$, this concludes the proof of the lemma.
\end{proof}

\subsection{Proof of Theorem~\ref{TheoremT}}
1. Let $\fw=(b_0,b_1,b_2,\dots)$ be the left edge of~\eqref{eqPGTriangle} generated by $\fu$.
Then $b_0 = a_0$, $b_1=a_0+a_1$, $b_2=a_0+2a_1+a_2$ and so on.
Then, by induction, one finds that the general formula 
for~$b_n$ is
\begin{equation}\label{eqbn}
    b_n = \binom{n}{0}a_0+\binom{n}{1}a_1+\cdots
    +\binom{n}{n}a_n\,.
\end{equation}
The formal power series of $\big(T(f)\big)(X)$ is 
$b_0+b_1X+b_2X^2+\cdots$, and to deduce a functional expression
for it, we rearrange the terms using formula~\eqref{eqbn}.
Collecting together similar terms with the same coefficient $a_n$, we see that
\begin{equation*}
  \begin{split}
    \big(T(f)\big)(X)
    = & a_0\big(1+X+X^2+X^3+\cdots\big)\\
     &+a_1X\bigg(\sdbinom{1+0}{1} +\sdbinom{1+1}{1}X 
     +\sdbinom{1+2}{1}X^2 
     +\sdbinom{1+3}{1}X^3+\cdots\bigg)\\
     &+a_2X^2\bigg(\sdbinom{2+0}{2} +\sdbinom{2+1}{2}X 
     +\sdbinom{2+2}{2}X^2 
     +\sdbinom{2+3}{2}X^3+\cdots\bigg)\\
     & + \cdots\\
    &+a_nX^n\bigg(\sdbinom{n+0}{n} +\sdbinom{n+1}{n}X 
     +\sdbinom{n+2}{n}X^2 
     +\sdbinom{n+3}{n}X^3+\cdots\bigg)\\
     &+\cdots
    \end{split}  
\end{equation*}
Using Lemma~\ref{LemmaPowerF} on each of the lines of the relation above we obtain
\begin{equation*}
  \begin{split}
    \big(T(f)\big)(X)
    & =\sdfrac{a_0}{1+X} +\sdfrac{a_1X}{(1+X)^2}
    +\sdfrac{a_2X^2}{(1+X)^3} +\cdots
    +\sdfrac{a_nX^n}{(1+X)^{n+1}} +\cdots
    \\
    & = \sdfrac{1}{1+X}\Big(a_0+a_1\sdfrac{X}{1+X}
          +a_2\bigg(\sdfrac{X}{1+X}\bigg)^2
          +\cdots
          +a_n\bigg(\sdfrac{X}{1+X}\bigg)^n
          +\cdots
          \Big)\\
    & = \sdfrac{1}{1+X} f\Big(\sdfrac{X}{1+X}\Big)\,,
    \end{split}  
\end{equation*}
which proves the first point of the theorem.

2. We apply formula~\eqref{eqOperatorT} twice. 
First we obtain 
\begin{equation*}
  \begin{split}
   T^{(2)}\big(f(X)\big) 
   = T\Big(T\big(f(X)\big)\Big)
    = \sdfrac{1}{1+X} T(f)\Big(\sdfrac{X}{1+X} \Big),
  \end{split}  
\end{equation*} 
and then, continuing, on the second application, after reducing the terms in the rational factions of $\FF_2(X)$ and making the necessary cancellations, we obtain:
\begin{equation*}
  \begin{split}
   T^{(2)}\big(f(X)\big) 
    = \sdfrac{1}{1+X}\cdot \sdfrac{1}{1+\frac{X}{1+X}}f\bigg(\sdfrac{\frac{X}{1+X}}{1+\frac{X}{1+X}}\bigg) = f(X).
  \end{split}  
\end{equation*} 
It then follows that $T$ is invertible, so that is bijective and $T^{-1}=T$.
This concludes the proof of the theorem.

\subsection{Proof of Theorem~\ref{TheoremTN}}
For any integer $N\ge 0$, the set of finite sequences $\cL_0(N)$ is in one-to-one correspondence with
the set of polynomials $\FF_2[X]$, which
in turn is embedded in $\FF_2[[X]]$, viewing the polynomials as formal power series 
with only finitely many non-zero coefficients. 
In accordance with this, the restriction of the application $T$ 
to $\cL_0(N)[X]$, and also that of $\Upsilon$, on the 
sequences side, to finite binary sequences $\cL_0(N)$, are well-defined. 
Furthermore, since as we have seen in the induction process during the proof of Theorem~\ref{TheoremT}, 
the transformations through $\Upsilon$
that occur between the top $\fu$ and the western edge $\fw$ 
actually occur in an ordered manner, 
the first~$N$ components of one only affect the first $N$ components 
of the other, for any $N\ge 0$.
Therefore, Theorem~\ref{TheoremTN}, 
the finite analogue version of Theorem~\ref{TheoremT},
also holds true for~\eqref{eqPGTriangle} triangles of bounded size 
and polynomials instead of formal power series. That is, the restriction of
$T$ to $\cL_0(N)[X]$ is still an involution, just like the 
corresponding restriction of $\Upsilon$ to $\cL_0(N)$ is.

\section{Binary generators and generators with only one champion
}\label{SectionRotationsSymmetriesHelix}

\subsection{Proof of Theorem~\ref{Theorem111}}
A consequence of the fact that $T$ is an involution, as proved in 
Theorems~\ref{TheoremT} and~\ref{TheoremTN}, 
on the side of the coefficients of the formal power series or of the polynomials, 
is the fact that the restriction of $\Upsilon$ to binary sequences 
is also an involution. 
This means that, on one hand, $\Upsilon^{(2)}(\fu) = \fu$ for binary sequences $\fu$
and for any finite initial fragments of these sequences, as well. 
It follows then that $\Upsilon^{(6k)}(\fu) = \fu$ for $k\ge 1$, 
meaning that the helicoid generated by $\fu\in\cL_0$ has all layers identical.

On the other hand, it also follows that $\Upsilon$ is invertible and its inverse
satisfies $\Upsilon^{(-1)} = \Upsilon$. 
Therefore, if $\Upsilon(\fu)=\fw$, it follows that 
$\Upsilon(\fw)=\Upsilon^{(2)}(\fu)=\fu$ 
for all finite or infinite binary sequences $\fu,\fw$. 
Geometrically, this means that the single layer of a helicoid generated by a binary 
sequence is composed of three identical diamond petals. 
Moreover, the diamond is the union of two equilateral triangles, 
positioned symmetrically across the short diagonal~$\fw$. 
This proves the first two parts of Theorem~\ref{Theorem111}.

\begin{figure}
\centering
\includegraphics[width=0.48\textwidth]{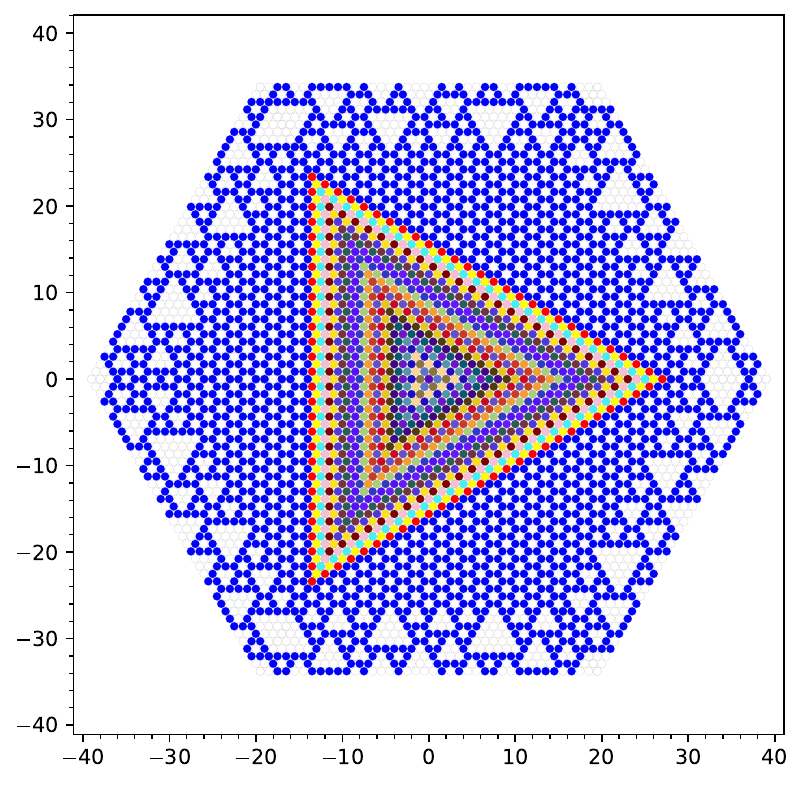}
    \includegraphics[width=0.48\textwidth]{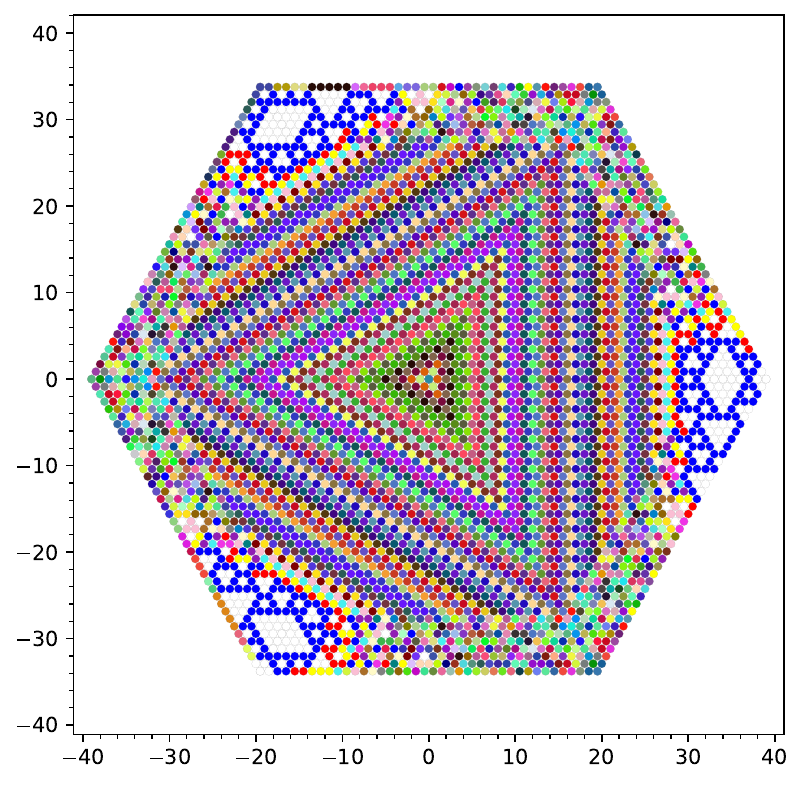}
\vspace{1cm} 
\hfill\hfill
    \includegraphics[width=0.38\textwidth]{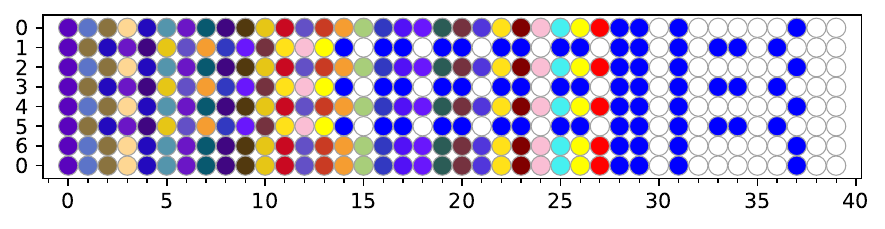}
    \hfill\hfill
    \includegraphics[width=0.38\textwidth]{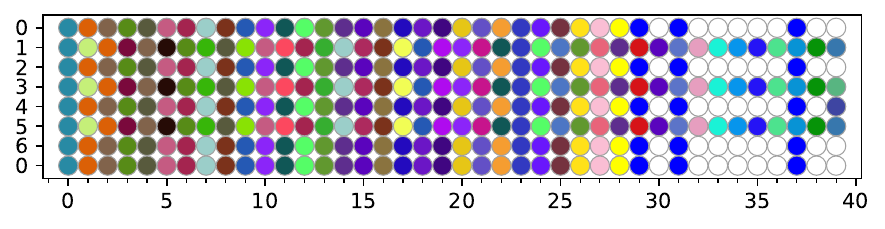}
\hfill\mbox{}
\vspace{-7mm}
\caption{Two helicoids with identical layers on all levels.
They are generated by $\fu$ given by
the first thirty elements of the Fibonacci sequence $F_n$ (left) 
and the Bisection of Fibonacci sequence~$F_{2n}$~\cite[A001906]{oeis} (right) in decreasing order, followed by the sequence of ten bits: $0,1,0,0,0,0,0,1,0,0$. 
Distinct integers are shown in different colors. 
Under the helicoids, the corresponding generating sequences 
$\Upsilon^{(0)}(\fu)$, $\Upsilon^{(1)}(\fu),\dots, \Upsilon^{(6)}(\fu)$
of the intermediate triangles are shown.
In both helicoids, the initial 
$\fu=\Upsilon^{(0)}(\fu)$ is
covered by $\Upsilon^{(6)}(\fu)$, but they can
be seen for comparison on the accompanying maps.
}
\label{FigureFibonacci}
\end{figure}

\begin{figure}
\centering
\includegraphics[width=0.48\textwidth]{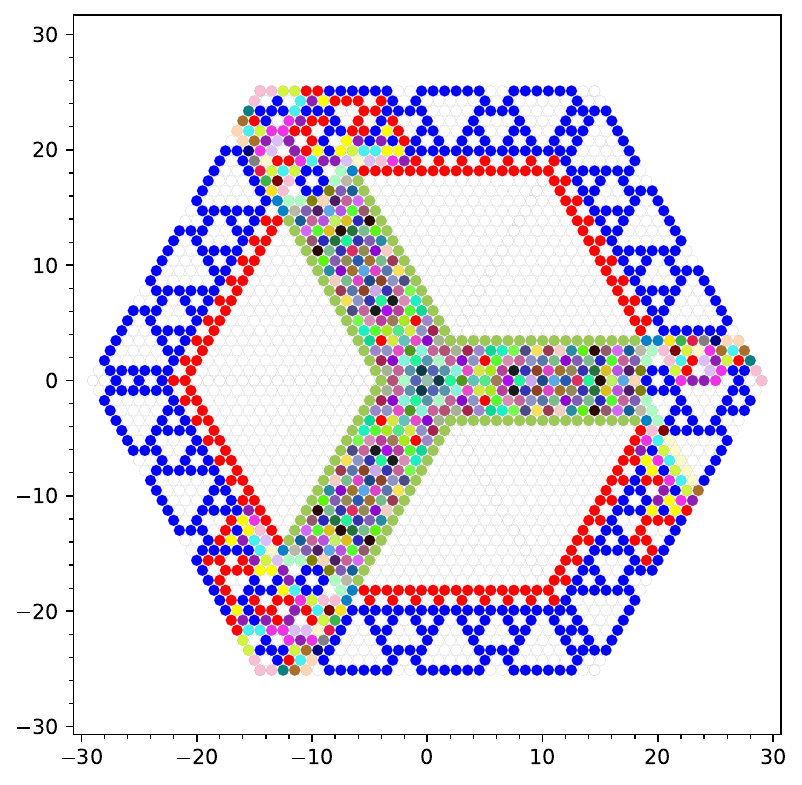}
    \includegraphics[width=0.48\textwidth]{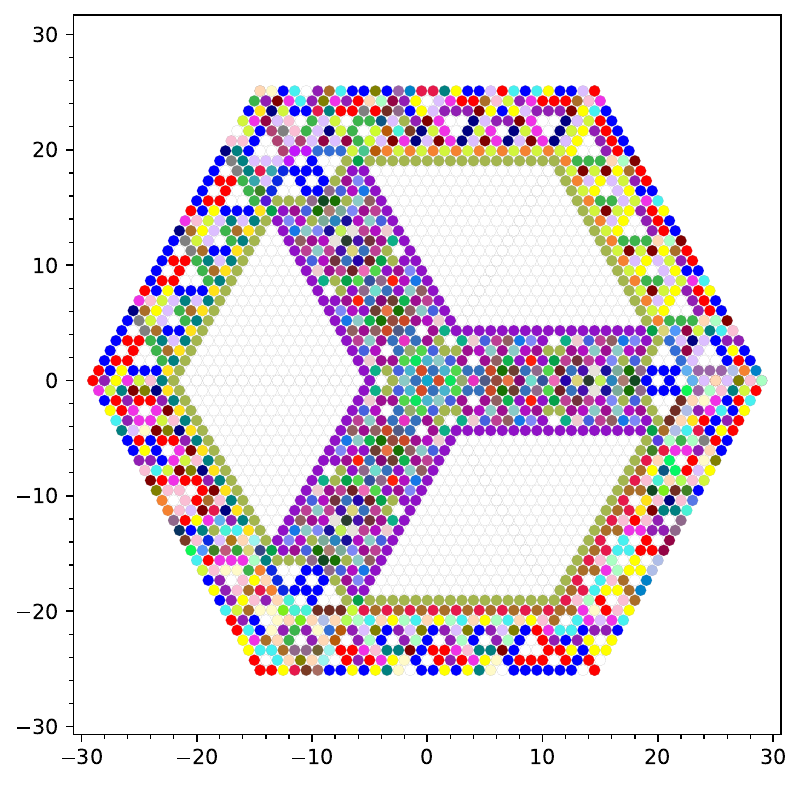}
\vspace{1cm} 
\hfill\hfill
    \includegraphics[width=0.38\textwidth]{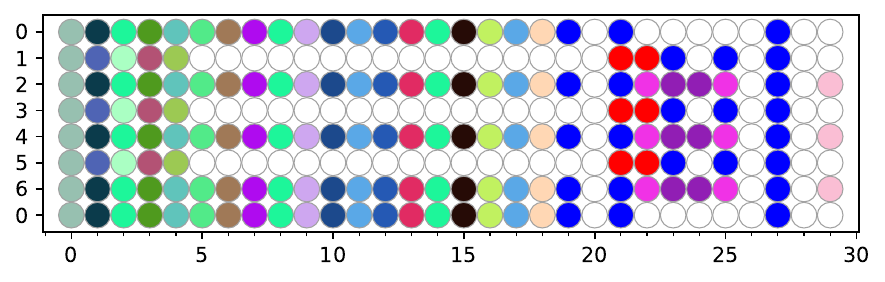}
    \hfill\hfill
    \includegraphics[width=0.38\textwidth]{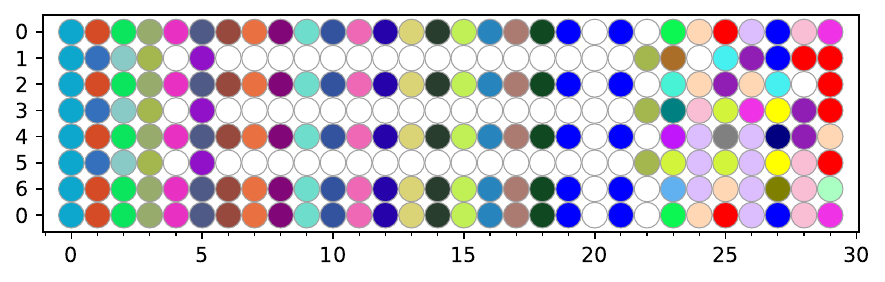}
\hfill\mbox{}
\vspace{-9mm}
\caption{The first level of the helicoid 
generated by $\fu'$ given by
the first twenty $4$th positive powers (left) 
and the fourth level of the helicoid (out of the nine distinct it has)
generated by $\fu''$ given by the 
first twenty $5$th positive powers (right),
both in decreasing order, and then followed each 
by the same sequence of ten bits: $0,1,0,0,0,0,0,1,0,0$. 
Distinct integers are shown in different colors. 
Under the represented levels, the corresponding generating sequences of their intermediate triangles are shown:
$\Upsilon^{(0)}(\fu')$, $\Upsilon^{(1)}(\fu'),\dots, \Upsilon^{(6)}(\fu')$ (left), and
$\Upsilon^{(18)}(\fu'')$, 
$\Upsilon^{(19)}(\fu''),\dots, 
\Upsilon^{(24)}(\fu'')$ (right, numbered also from $0$ to $6$).
In the representation on the left, 
$\fu'=\Upsilon^{(0)}(\fu')$ is 
covered by $\Upsilon^{(6)}(\fu')$
and in the representation on the right
$\Upsilon^{(18)}(\fu'')$ is 
covered by $\Upsilon^{(24)}(\fu'')$, 
but their elements can be seen for comparison 
on the last two rows of the maps underneath.
}
\label{FigurePowers45}
\end{figure}

\begin{figure}
\centering
\includegraphics[width=0.48\textwidth]{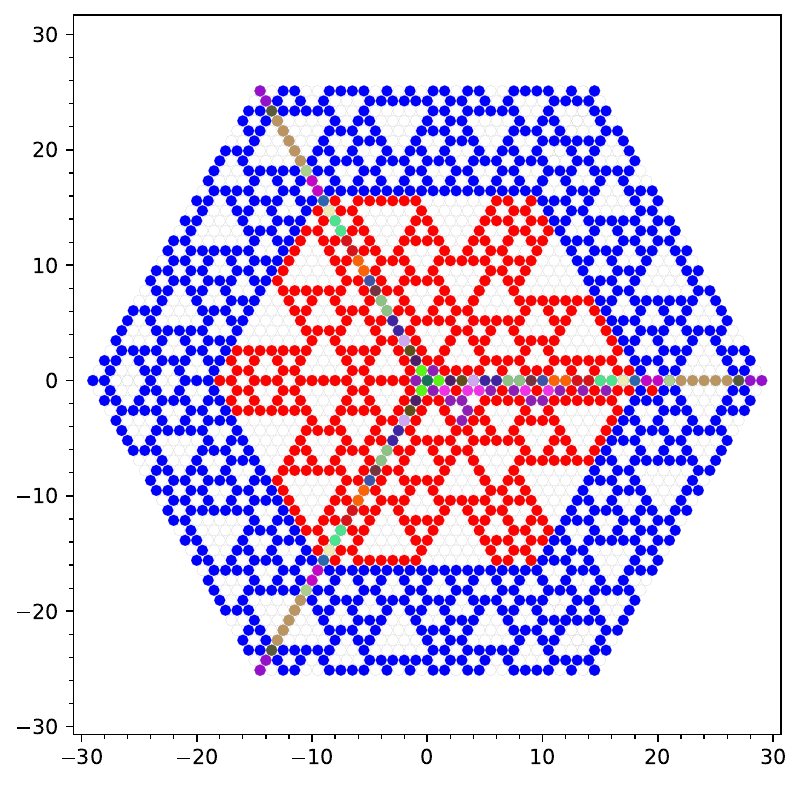}
    \includegraphics[width=0.48\textwidth]{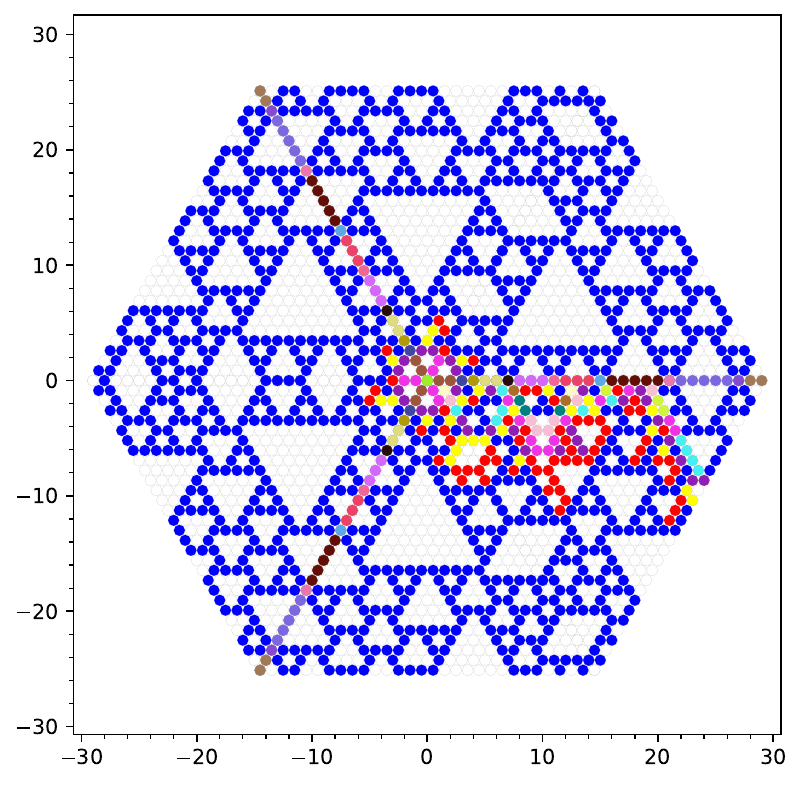}
\vspace{1cm} 
\hfill\hfill
    \includegraphics[width=0.38\textwidth]{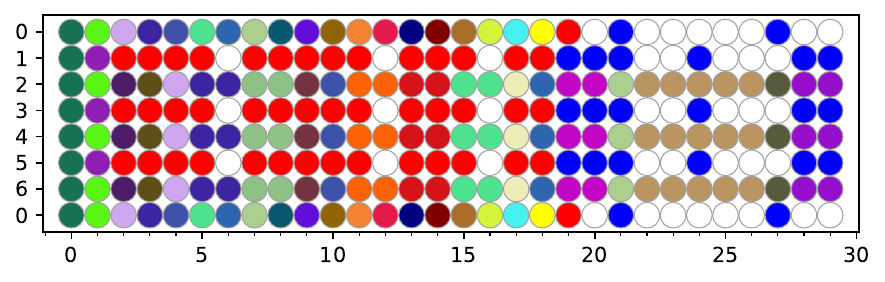}
    \hfill\hfill
    \includegraphics[width=0.38\textwidth]{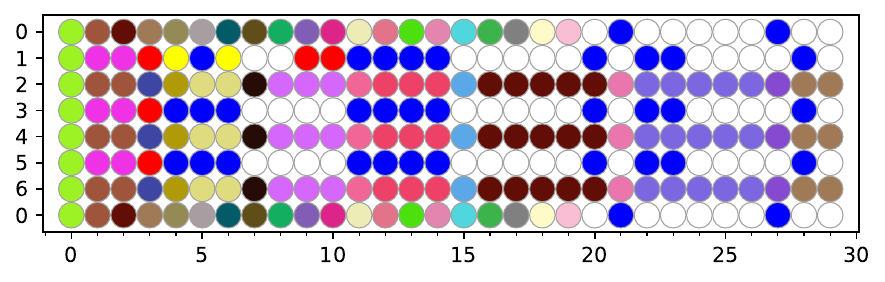}
\hfill\mbox{}
\vspace{-9mm}
\caption{The basic levels of two helicoids that have identical layers 
from the second level on.
They are generated by $\fu$ given by the first twenty primes (left) 
and the first twenty square-primes (right) 
in decreasing order, followed by the sequence of ten bits: $0,1,0,0,0,0,0,1,0,0$. 
Distinct integers are shown in different colors. 
Under the layers, the corresponding generating sequences 
$\Upsilon^{(0)}(\fu)$, $\Upsilon^{(1)}(\fu),\dots, \Upsilon^{(6)}(\fu)$
of the intermediate triangles are shown.
In both images, the initial 
$\fu=\Upsilon^{(0)}(\fu)$ is 
covered by $\Upsilon^{(6)}(\fu)$, but they can
be seen for comparison on the accompanying maps.
}
\label{FigurePrimesSquarePrimes20}
\end{figure}


In order to prove the third part of Theorem~\ref{Theorem111},
let $\rho\ge 0$ be integer and let $\scrC(\rho)$ be the hexagonal
\textit{circle of differences} of radius $\rho$ and center  $a_0$ on a layer generated by $\fu=(a_0,a_1,a_2,\dots)$. 
Explicitly, the numbers on the first edge of the circle are the elements on the eastern edge of the~\eqref{eqPGTriangle} triangle:
\begin{equation*}
    E_0 = \big(a_\rho= d_{\rho}^{(0)}, d_{\rho-1}^{(1)}, d_{\rho-2}^{(2)},
    \dots, d_{0}^{(\rho)}\big),
\end{equation*}
where the differences $d_k^{(j)}$ are defined by~\eqref{eqDifferences}.
Likewise are obtained all the six edges $E_m$, $0\le m\le 5$, of $\scrC(\rho)$, where $E_m$ is the eastern edge of the~\eqref{eqPGTriangle} triangle 
generated by $\Upsilon^{(m)}(\fu)$ instead of $\fu$, that is,
\begin{equation*}
    E_m = \Big(\Upsilon^{(m)}(a_\rho)= d_{\rho}^{(0)}(m), d_{\rho-1}^{(1)}(m),
    d_{\rho-2}^{(2)}(m),
    \dots, d_{0}^{(\rho)(m)}\Big),\ \ \text{ for $m=0,1,\dots,5$,}
\end{equation*}
where
\begin{equation*}\label{eqDifferencesm}
   d_k^{(j+1)}(m) := \big| d_{k+1}^{(j)}(m) - d_{k}^{(j)}(m)\big|
   \quad \text{ and } \quad d_k^{(0)}(m) := \Upsilon^{(m)}(a_k) \quad \text{ for  $j, k\ge 0$.}\\[1mm]
\end{equation*}
Then 
\begin{equation*}\label{eCerc}
   \scrC(\rho) :=E_0\cup E_1\cup E_2\cup E_3\cup E_4\cup E_5\,. 
\end{equation*}

Now, suppose that $\fu$ is the generator of a helicoid that has just one distinct 
layer, that is, we assume that $\Upsilon^{(6)}(\fu)=\fu$.
If $a\ge 0$ is the $\rho$th element of $\fu$, then the assumption says, 
in particular, that the $\rho$th element of $\Upsilon^{(6)}(\fu)$ is also equal to $a$. 
Then, let us analyze the process of generating the layer just on the circle
$\scrC(\rho)$.

\begin{remark}\label{RemarkChampions}
Suppose that $\rho >0$ and $a$ is a champion of the initial sequence $\fu$, that is,
$a>0$ and $a$ is strictly larger than all the elements if  $\fu$ of lower 
indices. 
\begin{enumerate}
\setlength\itemsep{2pt}
    \item 
As the construction of the~\eqref{eqPGTriangle} triangle and its
five subsequent continuations involves only taking absolute values of differences, the numbers on $\scrC(\rho)$ cannot be larger than $a$.
Moreover, the sequence of numbers on $\scrC(\rho)$ cannot increase 
again if it has dropped at any point to a lower value, 
because, by assumption, $a$ is a champion.
    \item 
Since the number that arrives to cover on the upper level the original $a$ after the $6$th
rotation is still~$a$, it then follows that all numbers on circle
$\scrC(\rho)$ are equal.  
    \item 
    Again, since $a$ is a champion, the only possibility for this to happen is 
    when all the numbers on the smaller adjacent
    hexagonal circle $\scrC(\rho -1)$ are $0$'s.
        \item 
    Further, it follows that all the numbers on  $\scrC(\rho -2)$ are also only $0$'s.    
\end{enumerate}
\end{remark}
By iterating, it follows from the above remark that if $a$ is a champion,
then all the numbers on the circle $\scrC(\rho)$ are equal to $a$ and all the 
numbers in the interior of $\scrC(\rho)$ are~$0$'s.
Therefore, there is no other champion in $\fu$ besides $a$, and this concludes
the proof of Theorem~\ref{Theorem111}.

\begin{figure}[htbp]
\centering
\begin{minipage}[b]{0.495\textwidth}
\centering
\includegraphics[width=\textwidth]{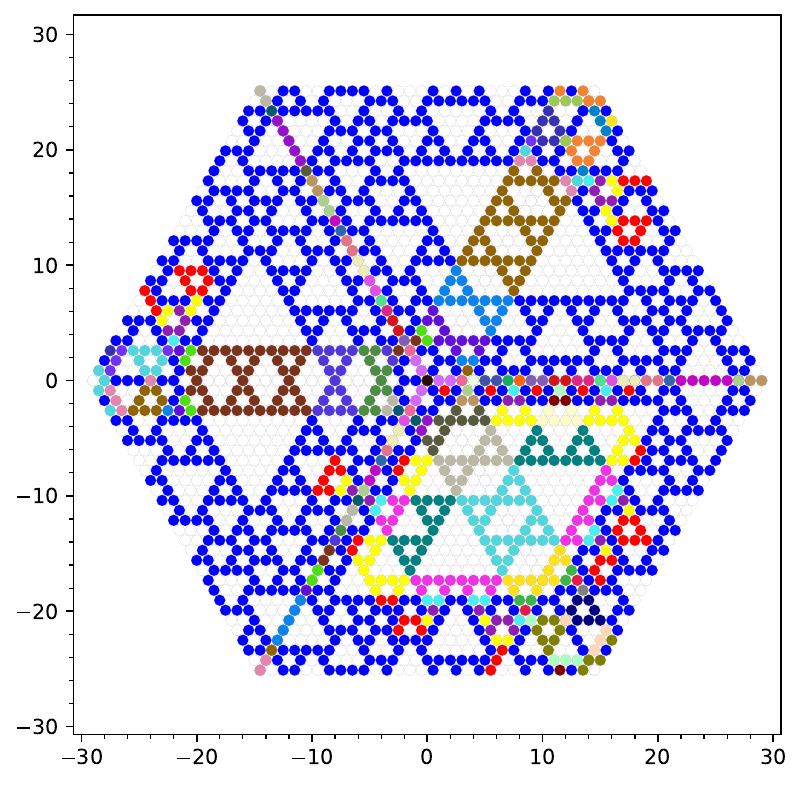}
\end{minipage}
\hfill
\begin{minipage}[b]{0.495\textwidth}
\centering
\includegraphics[width=\textwidth]{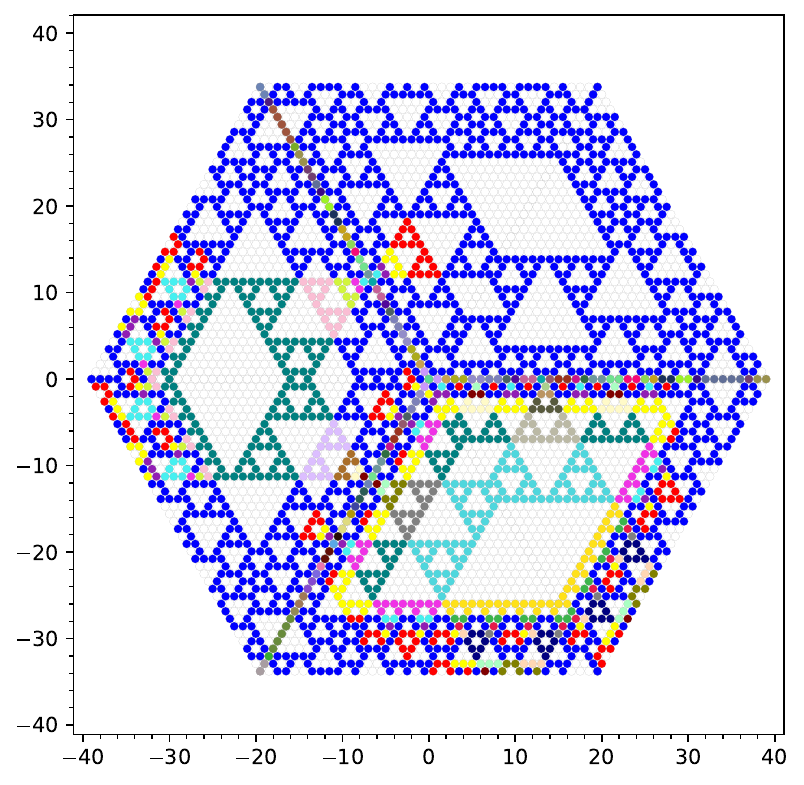}
\end{minipage}
\vspace{2mm}
\begin{minipage}[b]{0.495\textwidth}
\centering\ \ \
\includegraphics[width=.72\textwidth]{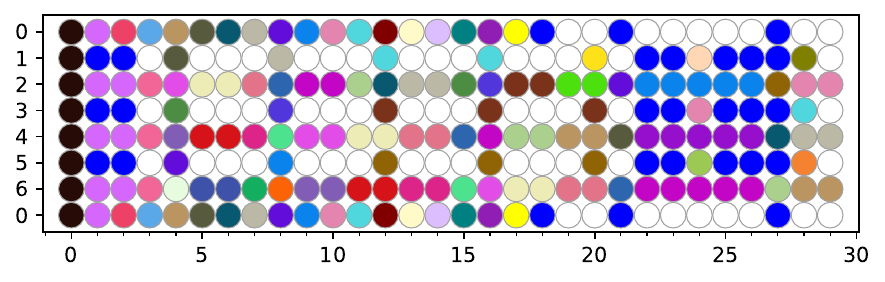}
\end{minipage}
\hfill
\begin{minipage}[b]{0.495\textwidth}
\centering\quad
\includegraphics[width=.92\textwidth]{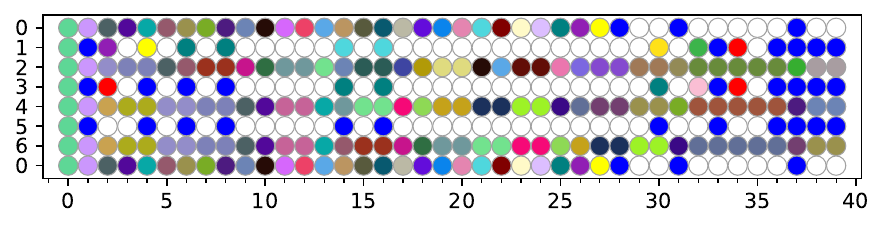}
\end{minipage}
\caption{The hexagons on the base level of two helicoids generated by
the sequence $\fu$ of non-negative integers whose base-$3$ representation contains 
no $2$~\cite[A005836]{oeis}. The image on the left uses $20$ and the image 
on the right $30$ elements of the sequence in decreasing order, both 
followed by the same ten bits: $0,1,0,0,0,0,0,1,0,0$. 
Distinct integers are shown in different colors. 
The helicoids have four and two distinct levels, respectively.
Under the hexagons, the corresponding generating sequences 
$\Upsilon^{(0)}(\fu)$, $\Upsilon^{(1)}(\fu),\dots, \Upsilon^{(6)}(\fu)$
of the intermediate triangles are shown.
In both images, the initial generators
$\fu=\Upsilon^{(0)}(\fu)$ are 
covered by $\Upsilon^{(6)}(\fu)$, but they can
be seen for comparison in the accompanying maps.}
\label{FigureAP3}
\end{figure}
\subsection{Trial of sequences with a single champion}
The necessary condition for sequences to have at most one champion in order for the
helicoids they generate to have just one distinct layer  
proves to be insufficient. In fact, there may exist one-champion sequences 
that generate helicoids with a record number of distinct layers.
In Figures~\ref{FigureFibonacci}--\ref{FigureAP3}, 
the helicoids generated by different finite integer sequences are shown, 
each having a single champion, their first element. 
To use comparable units in reasonably sized images that can be displayed in print, 
we have chosen decreasing sequences $\fu$ of $20$ or $30$ integers, 
all followed by the same sequence of $10$ random bits: $0,1,0,0,0,0,0,1,0,0$. 
Distinct numbers are represented by different colors. 
Under each helicoid, the generating sequences of the partial equilateral triangles, 
namely $\Upsilon^{(0)}(\fu)$, $\Upsilon^{(1)}(\fu),\dots, \Upsilon^{(6)}(\fu)$,
$\Upsilon^{(0)}(\fu)$, can be seen stacked on top of each other, 
making it easier to compare and determine if the helicoid has multiple distinct sheets on distinct levels. 

One finds that the results are mixed. 
There are sequences that generate helicoids with one distinct level, 
as the one in~Figure~\ref{FigureFibonacci}, or with exactly two distinct levels, as those in~Figure~\ref{FigurePowers45} (left),
Figure~\ref{FigurePrimesSquarePrimes20} and Figure~\ref{FigureAP3} (right).
There are also helicoids with more distinct levels, like the one
that is generated by $5$th powers in~Figure~\ref{FigurePowers45} (right), 
where is shown level four out of the nine distinct ones it has,
or the one in  Figure~\ref{FigureAP3} (left), 
which has four distinct levels.
The lead sequence, also used in reversed order in Figure~\ref{FigureAP3}, is  
$0, 1, 3, 4, 9, 10, 12, 13, 27, 28, 30, 31,\dots$,
the sequence that starts with $0$ and is generated by the greedy algorithm so that
it contains no arithmetic progressions of length $3$.   
Its elements are also characterized as being sums of distinct powers of $3$ 
or as having only $0$ and $1$ in their base-$3$ representation~\cite[A005836]{oeis}.
The longer instances with the first  $N=314, 315,\dots, 320$ elements
in reversed order
followed by the above sequence of ten bits
generate helicoids with $84$ distinct layers, the cycle having
just one element.

A complete characterization of the integer sequences based on the number of distinct levels that their associated helicoids have requires further investigation. 
For instance, it would be interesting to know if there are helicoids with an arbitrarily large number of distinct levels.
A promising candidate to try seems to be a series of length-balanced 
decreasing sequences of powers. 
For instance, to add to the points already mentioned, we note that
the sequence of just ten $9$th powers in decreasing order followed by $0,1,0,0,0,0,0,1,0,0$
produces a helicoid with $262$ distinct layers, of which $P=198$ are in a precycle and $C=64$
are in an endless repeated cycle.
And the analogues with $10$th and $11$th powers have $P+C = 140+128=268$ 
and  $P+C = 512+32=544$ distinct layers, respectively.
\begin{question*}
 Is there a sequence of finite sequences of positive integers that generates 
 helicoids with an unlimited number of distinct levels?
\end{question*}


\section{Proof of Theorem~\ref{TheoremAlmostAllSeq}}\label{SectionProofOfTheoremAlmostAllSeq}
Let us start by noting that it is sufficient to prove the belonging relationship 
in~\eqref{TheoremAlmostAllSeq} for proportions $R_\fw(k)$, as it will also imply the one for 
$R_\fe(k)$. 
This follows from the rotation-reflection symmetry, as we have seen in the discussion
Section~\ref{SectionRotationsSymmetriesHelix}, and also by following the same reasoning below 
with the sequences in $\fu=\fu(N)=(a_0,\dots,a_{N-1})\in\cL_0(N)$ indexed from right to left instead of left to right, 
which obviously would lead to the same conclusion, this time for $R_\fe(k)$.

Let $\varepsilon\in (0,1/2)$ be fixed. We first prove that there exists an integer $N_\varepsilon$
such that if $N\ge N_\varepsilon$ then
\begin{equation}\label{eqProofT11}
   \begin{split}
    \sdfrac{1}{2^N}\#\Big\{
    \fu\in\cL_0(N) :
    \sdfrac{1}{N}\#\big\{0\le j\le N-1 : a_j=1\big\}
    \in\big[1/2-\varepsilon,\,1/2+\varepsilon\big]
    \Big\}
    \ge 1-\varepsilon.
   \end{split}	
\end{equation}
For each $\fu\in\cL_0(N)$, denote the set of indices with components equal to $1$ by
\begin{equation*}
   S(\fu) := \big\{0\le j\le N-1 : a_j=1\big\} \subset\{0,1,\dots,N-1\}.
\end{equation*}
Then the left-hand side of inequality~\eqref{eqProofT11} can be rewritten as
\begin{equation*}\label{eqProofT12}
   \begin{split}
    &\sdfrac{1}{2^N}\#\Big\{
    \fu\in\cL_0(N) :
    \sdfrac{1}{N}\#S(\fu)
    \in\big[1/2-\varepsilon,\,1/2+\varepsilon\big]
    \Big\}\\
    =&\sdfrac{1}{2^N}\#\Big\{
    \fu\in\cL_0(N) :
    \sdfrac{N}{2}-N\varepsilon\le \#S(\fu) \le \sdfrac{N}{2}+N\varepsilon
    \Big\}\\
     =&\sdfrac{1}{2^N} \sum_{\frac{N}{2}-N\varepsilon\le l\le \frac{N}{2}+N\varepsilon}
     \#\Big\{
    \fu\in\cL_0(N) : \#S(\fu)=l\Big\}\,.
   \end{split}	
\end{equation*}
Noticing in passing that the unrestricted sum of all the cardinalities in the last sum 
is $2^N$, the inequality~\eqref{eqProofT11} that we want to prove is
\begin{equation*}
   \begin{split}
   \sdfrac{1}{2^N} \sum_{\frac{N}{2}-\varepsilon N\le l\le \frac{N}{2}+\varepsilon N}
    \binom{N}{l}\ge 1-\varepsilon\,
   \end{split}	
\end{equation*}
for sufficiently large $N$.
For this, it is sufficient to prove that in the sum above, the missed tails at the beginning and at the end, 
which are equal, due to the symmetry of the binomial coefficients, are small, that is,
\begin{equation*}
   2 \sum_{0 \le l\le M}
    \binom{N}{l}\le \varepsilon 2^N\,,
\end{equation*}
where we denoted $M:=\big\lfloor \frac{N}{2}-\varepsilon N\big\rfloor$.
Since the binomial coefficients are increasing in the range of summation above, we replace the sum with 
 a trivial upper bound and reformulate our object as the following 
 convenient statement that is sufficient to be proven:
\begin{equation}\label{eqProofT14}
   (M+1) \binom{N}{M}\le \varepsilon 2^{N-1}\,,
\end{equation}
for sufficiently large $N$.
To estimate the binomial coefficients we use Stirling's approximation formula for factorials given by Robbins~\cite{Rob1955} 
in the form of two tight upper and lower bounds:
\begin{equation*}
\sqrt {2\pi n}\left({\frac {n}{e}}\right)^{n}e^{\frac {1}{12n+1}}
<n!<
{\sqrt {2\pi n}\left({\frac {n}{e}}\right)^{n}e^{\frac {1}{12n}}.}
\end{equation*}
Then the left-hand side of~\eqref{eqProofT14} is 
\begin{equation}\label{eqProofT15}
   (M+1) \binom{N}{M} = (M+1) \frac{N!}{M!(N-M)!}
   < a 
   \frac{N^{N+1/2}}{M^{M-1/2}(N-M)^{N-M+1/2}},
\end{equation}
for a positive constant $a<1/200$ if $N>100$.
Here, we change the variable $M$ into $R$, where $R:=N/2-M$. 
Note that since $M=N/2-\varepsilon N + \theta$, where $|\theta |\le 1$, it follows that
\begin{equation}\label{eqR}
    R=\varepsilon N +O(1).
\end{equation}
Then the new form of the inequality~\eqref{eqProofT14} that we want to prove for 
$N$ sufficiently large is
\begin{equation*}
   a\cdot
   \frac{N^{N+1/2}}{\Big(\frac N2-R\Big)^{N/2-R-1/2}  \Big(\frac N2+R\Big)^{N/2+R+1/2}}
   \le \varepsilon 2^{N-1},
\end{equation*}
which can still be rearranged further into the more convenient form
\begin{equation}\label{eqProofT16}
   2a\cdot
   \frac{N^{1/2}}{\Big(1-\frac{2R}{N}\Big)^{N/2-R-1/2}  \Big(1+\frac{2R}{N}\Big)^{N/2+R+1/2}}
   \le \varepsilon. 
\end{equation}
Let us note that here $r:=2R/N$ is small because~\eqref{eqR} implies
\begin{equation}\label{eqr}
    r=\frac{2R}{N} 
    = \frac{2\varepsilon N+O(1)}{N} 
    = 2\varepsilon +O\Big(\sdfrac{1}{N}\Big).
\end{equation}
Introducing the new variable $r$ in~\eqref{eqProofT16}, we find that it is equivalent with
\begin{equation}\label{eqProofT17}
   2a\cdot
   \frac{N^{1/2}}{\big(1-r^2\big)^{N/2}  (1-r)^{-R-1/2} (1+r)^{R+1/2}}
   \le \varepsilon. 
\end{equation}
It remains to show that here the denominator has an order of magnitude higher than that of the numerator. 
To do this, we evaluate the logarithm of the denominator, which is
\begin{equation*}
  \begin{split}
   =&  \log\Big(\big(1-r^2\big)^{N/2} (1-r)^{-R-1/2} (1+r)^{R+1/2}\Big)\\
   =&
   \sdfrac{N}{2}\log\big(1-r^2\big) +\Big(R+\sdfrac 12\Big) \big(\log(1+r)-\log(1-r)\big).
  \end{split}
\end{equation*}
Then, by taking into account the size of $R$ and $r$ 
from~\eqref{eqR} and~\eqref{eqr}, and particularly the fact that $r$ is small,
we replace the logarithms with their power series approximations and
see that the logarithm of the denominator in~\eqref{eqProofT17} is further equal to
\begin{equation}\label{eqProofT19}
  \begin{split}
     &\sdfrac{N}{2}\big(-r^2+O(r^4)\big) 
     + \Big(R+\sdfrac 12\Big)\Big(2r+\sdfrac{2r^3}{3}+O(r^5)\Big) \\
     = & N\sum_{k=1}^\infty 2^{2k}
     \left(\frac{1}{2k-1}-\frac{1}{2k}\right) \varepsilon^{2k}
     +O(\varepsilon).
  \end{split}
\end{equation} 
It then follows that there exists an absolute constant $c>0$ such that~\eqref{eqProofT17} is satisfied for sufficiently large $N$ if
\begin{equation*}
   \begin{split}
   \frac{\log N}{c \varepsilon^2 N} < \frac{N^{1/2}}{c \varepsilon^2 N} 
   = \frac{1}{c\varepsilon^2N^{1/2}} <\varepsilon.
   \end{split}   
\end{equation*}
And, in order for this last requirement to be fulfilled, it is enough to take 
$N>N_\varepsilon$, with $N_\varepsilon:=\max\big\{100,\,\big\lfloor\frac{1}{c\varepsilon^6}\big\rfloor\big\}$.

To conclude the proof of Theorem~\ref{TheoremAlmostAllSeq} for just the left edge of 
the~\eqref{eqPGTriangle} triangle, we apply the operator $\Upsilon$ on the generating
sequences $\fu$.
Since we know by Theorems~\ref{TheoremT} and~\ref{TheoremTN}
that $\Upsilon$ is an involution, therefore also a bijection, it follows from the above argument
that, aside from an exceptional set of the same size as that of $\fu$'s,
on the western edge, the sequences~$\fw_0$ contain approximately the same number of $0$'s and $1$'s as well.

\smallskip

Then, in the same way, the same conclusion can be drawn for the next 
segments $\fw_1,\fw_2,\dots$, which are parallel to $\fw_0$, 
by reasoning with the partial subsequences of $\fu$ that ignore 
a few starting elements. 
In the following,  we need to quantify precisely for how many segments we can be sure that 
the almost-equal-proportions-result actually holds for.

Let $K$ be the number of the rays $\fw_0,\fw_1,\dots,\fw_{K-1}$ in question.
The size of $K$ that we can afford will be determined later.
These rays are generated by the partial subsequences of $\fu$ that are given by
$\fu_k=\fu_k(N):=(a_k,a_{k+1},\dots,a_{N-1})$ for $0\le k\le K-1$. 
Note that the number of the sequences $\fu_k(N)$ is $2^{N-k}$ for any fixed $k$ and $N$.

Let $\varepsilon\in (0,1/2)$ be fixed and let $k$ and $K$ be such that
$0\le k\le K-1<N$ for some large~$N$.
In order to fulfill the additional requirements on the multiple rays, 
we choose a larger value for the term on the right side of the inequality~\eqref{eqProofT11}, which means, a narrower target.
Thus, since the size of~$\fu_k$ is $N-k$,
its counterpart form becomes:
\begin{equation}\label{eqProofT11k}
   \begin{split}
    \sdfrac{1}{2^{N-k}}\#\Big\{
    \fu\in\cL_0(N-k) :
    \sdfrac{\#\big\{k\le j\le N-1 : a_j=1\big\}}{N-k}
    \in\big[1/2-\varepsilon,\,1/2+\varepsilon\big]
    \Big\}
    \ge 1-\sdfrac{\varepsilon}{K}.
   \end{split}	
\end{equation}
Let us note that this would suffice to completely prove  Theorem~\ref{TheoremAlmostAllSeq}. Indeed,  let $\cE_N(k)$ be the part of the exceptional set $\cE_N$ in the statement of
Theorem~\ref{TheoremAlmostAllSeq} that corresponds to the sequences $\fu$ 
that do not meet the requirement~\eqref{eqTheoremAlmostAllSeq} of 
approximately-equal-proportion of~$1$'s and $0$'s  for ray $\fw_k$.
Then, the exceptional sets are smaller and smaller in size as $k$ increases,
and despite the fact that the sets $\cE_N(k)$ are not disjoint,
we have
\begin{equation}\label{eqES}
    \cE(N) = \bigcup_{k=0}^{K-1}\cE_N(k)
    \quad\text{ and }\quad
    \#\cE(N)\le \sum_{k=0}^{K-1}\#\cE_N(k).
\end{equation}

We know, via the one-to-one correspondence $T$ from Theorem~\ref{TheoremTN}, that the  generating sequence $\fu$ and the left edge $\fw$ of any typical~\eqref{eqPGTriangle} 
triangle of zeros and ones are permutations of one another, 
so that, the cardinality of the analogous exceptional sets of
sequences that do not fulfill the approximately equal number of $1$'s and $0$'s condition
is the same.

Therefore, once proven~\eqref{eqProofT11k}, using the inequality in~\eqref{eqES}, 
for the exceptional set $\cE(N)$ in the statement of Theorem~\ref{TheoremAlmostAllSeq}, we find that
\begin{equation*}
    \begin{split}
    \frac{1}{2^N}\#\cE(N) \le 
     \frac{1}{2^N} \sum_{k=0}^{K-1}\#\cE_N(k)
     \le \sum_{k=0}^{K-1} \frac{1}{2^{N-k}}\#\cE_N(k)
     \le K\cdot \frac{\varepsilon}{K}=\varepsilon,
    \end{split}	
\end{equation*}
that is, $\#\cE(N)\le \varepsilon 2^N$, as needed.
Then we only have to prove~\eqref{eqProofT11k}, indicating the range of $N$ 
in which it holds, and how large we are allowed to take $K$.

Following the same steps as before in arguing relation~\eqref{eqProofT11}, 
by rewriting the cardinality of the sets inside~\eqref{eqProofT11k}, 
and then continuing by simplifying the resulting expression, 
the inequality on the tails becomes
\begin{equation*}
   \frac{1}{2^{N-k-1}} \sum_{0 \le l\le M}
    \binom{N-k}{l}\le \frac{\varepsilon}{K}\,,
\end{equation*}
where $M=\big\lfloor \frac{N-K}{2}-\varepsilon (N-K)\big\rfloor$.
It then follows that it suffices to see that for large $N$ we have
\begin{equation}\label{eqProofT144}
   (M+1) \binom{N}{M}\le \frac{\varepsilon}{K} 2^{N-1-K}\,,
\end{equation}
the strongest condition that, once met, 
all the corresponding $k$ inequalities will also 
be satisfied for $0\le k\le K-1$.

Further, the estimates of the binomial coefficients is done as
before, and we arrive to ponder the inequality
\begin{equation}\label{eqProofT177}
   \frac{2^{K}N^{1/2}}{\big(1-r^2\big)^{N/2}  (1-r)^{-R-1/2} (1+r)^{R+1/2}}
       \le \frac{\varepsilon}{AK},
\end{equation}
for some constant $A>0$ that is independent of $\epsilon, N$ and $K$.
Here $r$ is small and $R$ captures the new $M$ from~\eqref{eqProofT144}.
Precisely, as in~\eqref{eqR} and~\eqref{eqr}, we have:
\begin{equation}\label{eqRr}
    r = 2\varepsilon+O\Big(\sdfrac{1}{N}\Big)
    \quad\text{ and }\quad
    R = \varepsilon N+ O(1).
\end{equation}

The logarithm of the numerator on the left-hand side of~\eqref{eqProofT177} is
\begin{equation}\label{eqlogSUS} 
    K\log 2+\frac 12\log N
\end{equation}
and, using~\eqref{eqRr}, the logarithm of the denominator is the same as in~\eqref{eqProofT19}:
\begin{equation}\label{eqlogJOS} 
     N\sum_{k=1}^\infty 
     \frac{2^{2k-1}}{k(2k-1)} \varepsilon^{2k}
     +O(1) > c_0 \varepsilon^2 N.
\end{equation}
for some absolute constant $c_0>0$.

As a consequence, using~\eqref{eqlogSUS} and~\eqref{eqlogJOS}, we find that   inequality~\eqref{eqProofT177} holds true for sufficiently large $N$ as long 
as the following condition is also verified.
\begin{equation*}
    c_0\varepsilon^2 N
    -K\log 2-\frac 12\log N > \log\frac{K}{\varepsilon}
\end{equation*}
But, for this to happen, it is necessary to exist absolute constants
$c_1,c_2>0$ such that
\begin{equation*}
    c_1\varepsilon^2 N > K
    \quad \text{ and }\quad 
    c_2\varepsilon^2 N> \log {K} - \log{\varepsilon}.
\end{equation*}
Both these conditions are fulfilled if we take $K=\delta_\varepsilon N$, 
where $\delta=\delta_\varepsilon>0$ is
a suitable small constant that depends only on $\varepsilon$ and the absolute constants above, and $N$ is larger than a threshold~$N_{\varepsilon,\delta}$ that relies on the same dependencies and additionally on the choice of $\delta$.
This concludes the proof of Theorem~\ref{TheoremAlmostAllSeq}.

\medskip
\subsection*{Acknowledgement}
The authors acknowledge the contributions of 
Sundarraman Madhusudanan, a co-author of \cite{BMa2022},
in the proof of Lemma~\ref{LemmaSP2}.
%


\relax
\bibliographystyle{myabbrv}



\end{document}